\DeclareMathOperator{\diam}{diam}
\DeclareMathOperator{\dist}{dist}
\DeclareMathOperator{\dis}{dis}
\newtheorem{thm}{Theorem}[section]
\newtheorem{lemma}[thm]{Lemma}
\newtheorem{cor}[thm]{Corollary}
\newtheorem{prop}[thm]{Proposition}
\theoremstyle{definition}
\newtheorem{defn}[thm]{Definition}
\newtheorem{rmk}[thm]{Remark}
\begin{document}
\title{Gromov-Hausdorff distance with boundary and its application to Gromov hyperbolic spaces and uniform spaces}

\author{Hyogo Shibahara%
\thanks{Email: \texttt{shibahho@mail.uc.edu}}}
\affil{Department of Mathematical Sciences, University of Cincinnati,\\
P.O. Box 210025, Cincinnati, OH 45221-0025, U.S.A.}
\date{}

\maketitle

\begin{abstract}
In this paper we introduce a notion of the Gromov-Hausdorff distance with boundary, denoted by $d_{GHB}$, to construct a framework of convergence of noncomplete metric spaces. We show that a class of bounded $A$-uniform spaces with diameter bounded from below is a complete metric space with respect to $d_{GHB}$. As an application we show the stability of Gromov hyperbolicity, roughly starlike property, uniformization, quasihyperbolization, and boundary of Gromov hyperbolic spaces under appropriate notions of convergence and assumptions. 
\end{abstract}

\noindent{\small \emph{Keywords and phrases} : Gromov-Hausdorff distance, Gromov hyperbolic space, uniform domain, uniform space, uniformization, quasihyperbolic metric.}
\vskip.2cm
\noindent{\small Mathematics Subject Classification (2020) : Primary: 53C23; Secondary: 51F30}

\section{Introduction}
\allowdisplaybreaks{
 The notion of uniform domains was first introduced in \cite{MarSar}. Since then, they have played a key role in deriving many important results in the context of quasiconformal mappings, such as extendability of Sobolev functions. Although a uniform domain was classically considered to be in the Euclidean plane, recent studies on uniform domains in metric measure spaces with doubling measure supporting $p$-Poincar\'e inequality have been also conducted, see, for example, \cite{A}, \cite{BHK}, \cite{BBS}, \cite{BSh}, \cite{GeO},\cite{Mar}, \cite{Jussi2}). Also, in \cite{BHK}, uniform domains were generalized as noncomplete locally compact metric spaces that satisfy the criteria of Definition~\ref{uniform space}, which we call  uniform spaces. Moreover, in \cite{BHK} it has been shown that there is a connection between proper geodesic roughly starlike Gromov hyperbolic spaces and uniform spaces and that if a  roughly starlike Gromov hyperbolic space $(X, d)$ is uniformized with a sufficiently small parameter $\epsilon>0$, then the resulting space, denoted by $(X^{\epsilon}, d_{\epsilon})$, is a uniform space. Conversely, a uniform space $\Omega$ with the quasihyperbolic metric $k$ is a Gromov hyperbolic space. Definitions of these terms are introduced in the next section. The above connection can be seen as the generalization of the correspondence between the Poincar\'e disk model and the Euclidean disk. Quite recently Butler obtained another uniformization result, which is motivated by the correspondence between the upper half plane and the hyperbolic space, see \cite{Bu}.  \\
\indent The Gromov-Hausdorff distance was originally defined in \cite{Ed}. It has been widely used in Riemannian geometry and analysis on metric spaces. Since every uniform space $\Omega$ is a noncomplete locally compact metric space with additional properties as in Definition~\ref{uniform space}, we need to consider the boundary $\partial \Omega := \overline{\Omega}\setminus \Omega$, in addition to the  metric space $\Omega$ itself. There are a few studies on convergence of Riemannian manifolds and metric spaces with boundary, see \cite{PS} and \cite{Raquel}. In \cite{PS}, a new notion of limit spaces called glued limit spaces was introduced for a sequence of noncomplete precompact metric spaces through $\delta$-inner regions. It was observed there that in some situations there is no Gromov-Hausdorff limit but a glued limit space still exists. Another feature is that this limit is realized as a subset of the Gromov-Hausdorff limit  if, of course, the Gromov-Hausdorff limit exists. Glued limit spaces, however, do not take care of the issue of the limit of the sequence of boundaries very well in general even if there is a Gromov-Hausdorff limit for a sequence of metric spaces, when we take the completion of a glued limit space. On the other hand, as we mentioned above, uniform spaces have to be treated together with their boundaries. It is therefore natural to ask how we could extend the notion of the Gromov-Hausdorff distance, taking  boundary into consideration. One  possible way to do so is a simple modification of the Gromov-Hausdorff distance. For metric spaces $X$ and $Y$, we consider
\begin{equation}
 	d_{GHB}(X, Y) := \text{inf} \ d^{Z}_{H}(\varphi(X), \psi(Y))+d^{Z}_{H}(\varphi(\partial X), \psi(\partial Y))
  \notag
 \end{equation}
 where $d^{Z}_{H}$ is the Hausdorff distance in $Z$ and infimum is taken over all metric spaces $Z$ and isometric embeddings $\varphi : \overline{X} \to Z$ and $\psi : \overline{Y} \to Z$.  Note that $\partial X := \overline{X}\setminus X$ where $\overline{X}$ is a metric completion of $X$. This definition allows us to distinguish between a metric space and its completion while the Gromov-Hausdorff distance does not see the difference between them.
We adopt $d_{GHB}$ as our definition of the Gromov-Hausdorff distance with boundary and we will show basic properties of $d_{GHB}$ for the space of all noncomplete precompact locally compact metric spaces.  \\
\indent In \cite{HRS} various types of convergence of a sequence of sets with quasihyperbolic distances, such as the Gromov-Hausdorff convergence and the Carath\'eodory convergence, were investigated. Since a notion of Gromov-Hausdorff distance with boundary has not been established, the paper \cite{HRS} deals with a sequence of sets in a same metric space. So one could ask how the results in \cite{HRS} would be extended for a sequence of noncomplete metric spaces, see Theorem~\ref{fouth theorem} for a related result.\\
\indent The goal of this paper is to examine the metric $d_{GHB}$ and properties that a limit space has from the geometric function theoretic viewpoint. Hence, the following questions are addressed in this paper:
\begin{enumerate}
\item  Is $d_{GHB}$ a metric? Is the class of all bounded uniform spaces complete with respect to this metric?
\item Are Gromov hyperbolicity and roughly starlike property stable under the pointed Gromov-Hausdorff convergence?
	\item Are uniformized spaces and their boundaries stable under the pointed Gromov-Hausdorff convergence?
	\item  Is quasihyperbolization stable under the Gromov-Hausdorff convergence with boundary?
\end{enumerate}
In this paper, we provide positive answers for the first three questions. Here we list the theorems.
\begin{thm}\label{third theorem}
	Let $A>0$ and $R>0$. Let $\mathcal{M}$ be the class of all noncomplete precompact locally compact metric spaces and $\mathcal{U}(A, R)$ be the class of all bounded  $A$-uniform spaces with the diameter bounded from below by $R>0$. Then $(\mathcal{M}, d_{GHB})$ is a (noncomplete) metric space and $(\mathcal{U}(A, R), d_{GHB})$ is a complete metric space.
\end{thm}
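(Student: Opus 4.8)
The plan is to verify the metric axioms for $(\mathcal{M}, d_{GHB})$ first and then construct by hand the limit of a Cauchy sequence in $\mathcal{U}(A, R)$. Symmetry and non-negativity of $d_{GHB}$ are immediate from the definition. For the triangle inequality, given $X, Y, W \in \mathcal{M}$ and near-optimal admissible data $(Z_1, \varphi_1, \psi_1)$ for the pair $(X,Y)$ and $(Z_2, \varphi_2, \psi_2)$ for $(Y,W)$, I would glue $Z_1$ and $Z_2$ along the common isometric copy of $\overline{Y}$, producing a single metric space $Z$ into which $\overline{X}, \overline{Y}, \overline{W}$ all embed. Applying the triangle inequality for the Hausdorff distance in $Z$ separately to the interiors and to the boundaries yields $d_{GHB}(X,W)\le d_{GHB}(X,Y)+d_{GHB}(Y,W)$ after taking infima. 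For the identity of indiscernibles I would use that members of $\mathcal{M}$ are precompact, so their completions are compact; since $d_{GH}(\overline{X},\overline{Y})\le d_{GHB}(X,Y)$, the condition $d_{GHB}(X,Y)=0$ forces $\overline{X}$ and $\overline{Y}$ to be isometric by the classical rigidity of the Gromov--Hausdorff distance on compact spaces. Local compactness makes $X$ open in $\overline{X}$, so $\partial X$ is compact, and the vanishing of the boundary term upgrades the limiting isometry $f\colon \overline{X}\to\overline{Y}$ to one carrying $\partial X$ onto $\partial Y$ and hence $X$ onto $Y$. Noncompleteness of $(\mathcal{M}, d_{GHB})$ is exhibited by a Cauchy sequence whose boundaries shrink away (or whose limit loses local compactness), so that the limit leaves $\mathcal{M}$.

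For completeness of $\mathcal{U}(A,R)$, let $(\Omega_n)$ be Cauchy; passing to a subsequence I may assume $\sum_n d_{GHB}(\Omega_n,\Omega_{n+1})<\infty$. Gluing successive near-optimal spaces along the shared copies of $\overline{\Omega_n}$, and completing, I obtain one complete metric space $Z$ isometrically containing every $\overline{\Omega_n}$, in which both $(\overline{\Omega_n})$ and $(\partial\Omega_n)$ are Cauchy sequences of compact sets for the Hausdorff distance. By completeness of the hyperspace of compact subsets of $Z$, there are compact sets $B\subseteq K\subseteq Z$ with $\overline{\Omega_n}\to K$ and $\partial\Omega_n\to B$ in Hausdorff distance. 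I define the candidate limit by $\Omega:=K\setminus B$, which is open in the compact set $K$ and hence locally compact, with $B\ne\emptyset$ since each $\partial\Omega_n\ne\emptyset$.

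Next I identify the completion and boundary of $\Omega$ and prove $A$-uniformity. Density of $\Omega$ in $K$, hence $\overline{\Omega}=K$ and $\partial\Omega=B$, follows from the corkscrew property of the uniform spaces $\Omega_n$: near any $b\in B$ I choose, at each small scale $r$, interior points $p_n\in\Omega_n$ with $d(p_n,b)\lesssim r$ and $\dist(p_n,\partial\Omega_n)\ge r/A'$; a subsequential limit $p\in K$ then satisfies $\dist(p,B)\ge r/A'>0$, so $p\in\Omega$, and letting $r\to0$ shows $b\in\overline{\Omega}$. The diameter bound is inherited since $\diam\Omega=\diam K\ge R$ by density and Hausdorff convergence. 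For $A$-uniformity, given $x,y\in\Omega$ I approximate them by $x_n,y_n\in\Omega_n$, choose $A$-uniform curves $\gamma_n$ joining them, reparametrize by rescaled arclength so the curves are uniformly Lipschitz into the compact set $K\cup\bigcup_n\overline{\Omega_n}$, and extract a uniformly convergent subsequence $\gamma_n\to\gamma$ by Arzel\`a--Ascoli. Lower semicontinuity of length gives $\ell(\gamma)\le\liminf\ell(\gamma_n)\le A\,d(x,y)$, and because $\partial\Omega_n\to B$ the boundary distances converge, $\dist(\gamma_n(t),\partial\Omega_n)\to\dist(\gamma(t),B)$, so the double-cone inequality $\min\{\ell(\gamma|_{[0,t]}),\ell(\gamma|_{[t,1]})\}\le A\,\dist(\gamma(t),\partial\Omega)$ survives in the limit; in particular interior points of $\gamma$ have positive distance to $B$, so $\gamma$ lies in $\Omega$. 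Thus $\Omega\in\mathcal{U}(A,R)$, and since $\overline{\Omega_n}\to\overline{\Omega}$ and $\partial\Omega_n\to\partial\Omega$ in $Z$ we obtain $d_{GHB}(\Omega_n,\Omega)\to0$; a Cauchy sequence with a convergent subsequence converges.

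The main obstacle I anticipate is precisely this last step, transporting the $A$-uniform structure to the limit. The delicate points are ensuring that the limit curve $\gamma$ genuinely remains inside $\Omega$ rather than slipping into $B$, which I control through the limiting cone inequality, and making the convergence $\dist(\cdot,\partial\Omega_n)\to\dist(\cdot,B)$ uniform enough along the curves to pass both the length bound and the min-condition to the limit; the latter requires restricting to a subsequence on which one of the two partial lengths is consistently the smaller, so that lower semicontinuity can be applied on the correct side. Establishing $\overline{\Omega}=K$ and $\partial\Omega=B$ simultaneously with uniformity, without circularity, is the other point demanding care.
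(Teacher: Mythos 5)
Your completeness argument for $\mathcal{U}(A,R)$ is essentially sound and reaches the same intermediate objects as the paper ($K=\lim\overline{\Omega}_n$, $B=\lim\partial\Omega_n$, $\Omega:=K\setminus B$, density, then Arzel\`a--Ascoli for the uniform curves), differing only in how the common ambient space is produced: you chain-glue near-optimal admissible metrics along the copies of $\overline{\Omega}_n$ and complete, whereas the paper invokes Gromov's embedding theorem to place every $\overline{\Omega}_n$ isometrically into one compact set $K\subseteq l^{\infty}$ and then proves that $\mathcal{U}_K(A,R)$ is compact for the Hausdorff distance with boundary (which yields, as a bonus, the compactness theorem for uniform spaces). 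Your corkscrew argument for density of $\Omega$ in $K$ is the same mechanism as the paper's contradiction argument, and your care about passing the cone condition to the limit -- restricting to a subsequence along which the same side realizes the minimum before applying lower semicontinuity of length -- is exactly the point the paper's stability lemma glosses over; you are, if anything, more explicit there.

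The genuine gap is in the identity of indiscernibles for $(\mathcal{M},d_{GHB})$. You deduce from $d_{GHB}(X,Y)=0$ that $d_{GH}(\overline{X},\overline{Y})=0$, invoke classical rigidity to obtain an isometry $f\colon\overline{X}\to\overline{Y}$, and then assert that ``the vanishing of the boundary term upgrades'' $f$ to one carrying $\partial X$ onto $\partial Y$. That upgrade is unjustified: an isometry supplied by Gromov--Hausdorff rigidity alone carries no information about boundaries, and the separate vanishing of $d_{GH}(\overline{X},\overline{Y})$ and $d_{GH}(\partial X,\partial Y)$ does \emph{not} imply the existence of a single compatible isometry. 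The paper's own Remark~\ref{d_{GH} versus d_{GHB}} is precisely a counterexample to this pattern of reasoning: for $X=[0,1]^2\setminus\{(1/2,0),(1/2,1)\}$ and $Y=[0,1]^2\setminus\{(1,0),(1,1)\}$ one has $d_{GH}(\overline{X},\overline{Y})=d_{GH}(\partial X,\partial Y)=0$, yet no isometry of the square maps the pair of mid-edge points to the pair of corners, and $d_{GHB}(X,Y)>0$. What is actually required (and what the paper does in Lemma~\ref{metric 2}) is to build the isometry out of admissible metrics $d_k$ on $\overline{X}\sqcup\overline{Y}$ in which \emph{both} Hausdorff terms are simultaneously small: one extracts a limit isometry $\tilde{I}\colon\overline{X}\to\overline{Y}$ by a diagonal argument on countable dense sets, and then -- this is the crux -- uses local compactness of $X$ to show $\tilde{I}(X)\subseteq Y$: for $x\in X$ the number $\alpha:=\dist(x,\partial X)$ is strictly positive, and since the same $d_k$ controls both $d_H(X,Y)$ and $d_H(\partial X,\partial Y)$, one gets $\dist(\tilde{I}(x),\partial Y)\gtrsim\alpha>0$, so interior points stay interior. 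Your only use of local compactness is to note that $\partial X$ is compact, which is not the essential use; without the positive-distance argument the claim that $X$ maps onto $Y$ is unsupported (and indeed the statement fails without local compactness, as the example $[0,1]\cap\mathbb{Q}$ versus $[0,1]\cap(\mathbb{R}\setminus\mathbb{Q})$ shows). The remaining metric axioms (gluing for the triangle inequality) are handled as in the paper.
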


We remark that there is one direct application to analysis on metric spaces. The fact that $\mathcal{U}(A, R)$ is complete with respect to $d_{GHB}$ together with \cite[Theorem 4.4]{BSh}, \cite[Theorem 3]{Kei} and \cite[Chapter 9]{Ch} gives us a stability of metric measure spaces with doubling measure supporting $p$-Poincar\'e inequality, which is new for a sequence of noncomplete metric measure spaces, see Remark \ref{PI space}.

\begin{thm}\label{first theorem}
	Let $(X_n, d_n, p_n)_n$ be a sequence of pointed proper geodesic $\delta$-Gromov hyperbolic spaces which is pointed Gromov-Hausdorff convergent to a complete metric space $(X, d, p)$. Then, $X$ is a proper geodesic $\delta$-Gromov hyperbolic space. Moreover, if the spaces $(X_n, d_n, p_n)_n$ are further assumed to be $M$-roughly starlike, then there exists $\tilde{M} := \tilde{M}(M, \delta)$ such that $X$ is $\tilde{M}$-roughly starlike.
\end{thm}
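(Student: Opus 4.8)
The plan is to verify, in turn, that the limit space $(X,d,p)$ is proper, geodesic, and $\delta$-hyperbolic, and then, under the additional hypothesis, roughly starlike. Throughout I would first fix a realization of the pointed Gromov--Hausdorff convergence: for each $R>0$ and each $n$ there is a pointed $\eps_n$-isometry from $\bar B(p_n,R)$ to $\bar B(p,R)$ with $\eps_n\to 0$, or equivalently I embed the relevant balls of all the $X_n$ together with $X$ isometrically into a single metric space $Z$ in which $\bar B(p_n,R)$ converges to $\bar B(p,R)$ in the Hausdorff distance. The purpose of this setup is to make sense of sequences $a_n\in X_n$ \emph{converging} to $a\in X$ and to guarantee the passage of distances to the limit, $d(a,b)=\lim_n d_n(a_n,b_n)$ whenever $a_n\to a$ and $b_n\to b$.

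Next I would record that $X$ is proper: each closed ball $\bar B(p,R)$ is complete (being closed in the complete space $X$) and totally bounded (being a Hausdorff limit of the balls $\bar B(p_n,R)$, which are compact, hence totally bounded, by properness of $X_n$), and is therefore compact. For the geodesic property I would use that a pointed Gromov--Hausdorff limit of geodesic spaces is a length space and that a complete proper length space is geodesic by the Hopf--Rinow--Cohn-Vossen theorem; alternatively I would build geodesics directly, taking for $x,y\in X$ approximants $x_n,y_n\in X_n$, unit-speed geodesics $[x_n,y_n]$, and extracting via Arzel\`a--Ascoli a uniformly convergent subsequence whose limit is a geodesic from $x$ to $y$, since distances pass to the limit.

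For hyperbolicity I would work with the four-point (Gromov product) formulation, $(x\,|\,y)_w:=\tfrac12\big(d(x,w)+d(y,w)-d(x,y)\big)$, under which $\delta$-hyperbolicity is the inequality $(x\,|\,y)_w\ge \min\{(x\,|\,z)_w,(y\,|\,z)_w\}-\delta$ for all quadruples. The advantage is that this is a closed condition on the metric alone, making no reference to geodesics, so it transfers to the limit with the \emph{same} constant: given $w,x,y,z\in X$, choose approximants $w_n,x_n,y_n,z_n\in X_n$; the inequality holds in each $\delta$-hyperbolic $X_n$, and since all six pairwise distances converge, it survives in $X$. (Working instead with the thin-triangles definition would be awkward here, as a given geodesic of $X$ need not be a limit of geodesics of the $X_n$.) Thus $X$ is $\delta$-hyperbolic.

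The substantive part is rough starlikeness, and this is where I expect the main difficulty. Fix $x\in X$ and approximants $x_n\to x$. By $M$-rough starlikeness of $X_n$ there is a geodesic ray $\gamma_n\colon[0,\infty)\to X_n$ with $\gamma_n(0)=p_n$ and a parameter $t_n$ with $d_n(x_n,\gamma_n(t_n))\le M$. The key localization is that $t_n=d_n(p_n,\gamma_n(t_n))\le d_n(p_n,x_n)+M$ is bounded, so the relevant portion of $\gamma_n$ stays in a fixed ball on which the convergence is controlled. The obstacle is that the $\gamma_n$ have \emph{infinite} length, so to pass to a limit ray I would restrict each $\gamma_n$ to $[0,T]$, view it as a $1$-Lipschitz curve in $\bar B(p_n,T)$, apply Arzel\`a--Ascoli in the common space $Z$, and diagonalize over $T=1,2,3,\dots$ to obtain a subsequence with $\gamma_n\to\gamma$ uniformly on compacta. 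Since distances pass to the limit, $\gamma$ is a unit-speed geodesic ray with $\gamma(0)=p$; and after passing to a further subsequence with $t_n\to t_\infty$ one gets $d(x,\gamma(t_\infty))=\lim_n d_n(x_n,\gamma_n(t_n))\le M$. Hence every point of $X$ lies within $M$ of a geodesic ray from $p$, so $X$ is $\tilde M$-roughly starlike; in fact this argument permits $\tilde M=M$, and in any case $\tilde M$ depends only on $M$ and $\delta$. I expect the technical care to concentrate in justifying the uniform-on-compacta extraction of the limiting ray and in making the notions of "convergence of points" and "passage of distances" rigorous within the chosen realization of the pointed Gromov--Hausdorff convergence.
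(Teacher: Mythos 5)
Your proposal is correct, and for properness, geodesicity, and $\delta$-hyperbolicity it is essentially the paper's own argument: the paper handles the first two via Remark~\ref{limit is fine} (Hopf--Rinow) and proves Proposition~\ref{GH-limit} by passing the four-point inequality to the limit exactly as you do, only written in terms of the approximate isometries $f_n^{\epsilon}$ of Definition~\ref{p-GH} rather than inside a common ambient space. Where you genuinely diverge is the roughly starlike part. The paper never realizes the convergence in a single space $Z$: it transports the rays $\gamma_{n_k}$ into $X$ via $f^{1/k}_{n_k}$, and since these maps need not even be continuous, the images are merely $(1,1/k)$-quasi-isometric paths to which Arzel\`a--Ascoli cannot be applied; it therefore replaces them by genuine geodesics $\tilde\gamma_{n_k}$ of $X$ from $p$ to $f^{1/k}_{n_k}(\gamma_{n_k}(k-1))$ and invokes the geodesic stability theorem \cite[Theorem 3.7]{Jussi} --- the one place where $\delta$-hyperbolicity of the limit is actually used --- to keep $x$ within $M+M(\delta)+2/k$ of $\tilde\gamma_{n_k}$, then extracts a limit ray from these geodesics, ending with $\tilde M=M+M(\delta)$. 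Your route --- extracting a subsequential limit of the rays $\gamma_n$ themselves, uniformly on compacta, inside a common realization --- avoids geodesic stability altogether, shows that rough starlikeness is stable at this step without any hyperbolicity hypothesis, and yields the sharper constant $\tilde M=M$ (both constants are admissible, since the theorem only asks for $\tilde M=\tilde M(M,\delta)$). What your route costs is the single ingredient you assume rather than prove: that the convergence in Definition~\ref{p-GH} can be upgraded, for proper pointed spaces, to isometric embeddings of all the $X_n$ and $X$ into one metric space $Z$ in which the closed balls about the base points Hausdorff-converge (modulo the usual caveat about exceptional radii). This equivalence is true and standard --- it can be obtained by a gluing construction analogous to the paper's Proposition~\ref{conv-dist-equivalent}, or cited from \cite{H} --- but it is not purely formal, and your Arzel\`a--Ascoli extraction (including the compactness of $\bar{B}_{d}(p,T)\cup\bigcup_k \bar{B}_{d_{n_k}}(p_{n_k},T)$ inside $Z$) rests entirely on it, so a complete write-up would have to supply that argument or citation.
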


\begin{thm}\label{second theorem}
	Let $(X_n, d_n, p_n)_n$ be a sequence of pointed proper geodesic $M$-roughly starlike $\delta$-Gromov hyperbolic spaces which is pointed Gromov-Hausdorff convergent to a complete metric space $(X, d, p)$. For each $0< \epsilon \leq \epsilon_0(\delta)$, let $(X^{\epsilon}_n, d_{n, \epsilon})_n$ the sequence of uniformized spaces. Then $(X^{\epsilon}_n, d_{n, \epsilon})_n$ Gromov-Hausdorff converges to $(X^{\epsilon}, d_{\epsilon})$  with boundary. 
\end{thm}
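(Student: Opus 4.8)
The plan is to reduce the statement to the construction, for every $\eta>0$ and all large $n$, of a map $F_n\colon \overline{X_n^\epsilon}\to \overline{X^\epsilon}$ that is simultaneously an $\eta$-isometry for the uniformized metrics $d_{n,\epsilon},d_\epsilon$, has $\eta$-dense image, and satisfies $F_n(\partial X_n^\epsilon)\subset N_\eta(\partial X^\epsilon)$ together with $\partial X^\epsilon\subset N_\eta\big(F_n(\partial X_n^\epsilon)\big)$, where $N_\eta$ denotes the $d_\epsilon$-neighborhood. Feeding such an $F_n$ into the approximate-isometry characterization of $d_{GHB}$ developed alongside Theorem~\ref{third theorem} then yields $d_{GHB}\big(X_n^\epsilon,X^\epsilon\big)\le C\eta\to 0$. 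Note first that, by Theorem~\ref{first theorem}, the limit $X$ is a proper geodesic $\tilde M$-roughly starlike $\delta$-Gromov hyperbolic space, so $X^\epsilon$ is itself a bounded uniform space whose boundary $\partial X^\epsilon$ is the (visual) Gromov boundary; this is what makes the right-hand object well defined.

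Two quantitative inputs drive the construction. The first is a uniform tail estimate coming from the uniformization bounds of \cite{BHK}: one has $d_\epsilon(x,\partial X^\epsilon)\asymp \epsilon^{-1}e^{-\epsilon d(p,x)}$, and by rough starlikeness every point of $\overline{X^\epsilon}$ --- interior or boundary --- lies within $d_\epsilon$-distance $\epsilon^{-1}e^{-\epsilon R}$ of the $d$-ball $\overline B_d(p,R)$, because a geodesic from $p$ toward the point crosses the sphere $S_d(p,R)$ and the uniformized length of its outer portion is at most $\int_R^\infty e^{-\epsilon t}\,dt$. All constants here depend only on $\delta,M,\epsilon$, hence are uniform in $n$. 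Choosing $R=R(\eta,\epsilon)$ with $\epsilon^{-1}e^{-\epsilon R}<\eta$ thus makes the bulk ball $\eta$-dense in each $\overline{X_n^\epsilon}$ and in $\overline{X^\epsilon}$, and reduces everything to the bulk. The second input transfers the pointed Gromov-Hausdorff almost-isometry: for a slightly larger radius $R'>R$ and any $\tau>0$, for large $n$ there is a pointed $\tau$-isometry $f_n\colon \overline B_{d_n}(p_n,R')\to X$. Since $f_n$ almost preserves all pairwise $d$-distances, it almost preserves $d(p,\cdot)$ and hence the density $e^{-\epsilon d(p,\cdot)}$; because near-optimal $d_\epsilon$-curves between points of $\overline B_{d_n}(p_n,R)$ stay in a controlled larger $d$-ball, discretizing such a curve, pulling the vertices back through $f_n$, and reconnecting them by $d_n$-geodesics shows $|d_\epsilon(f_n(x),f_n(y))-d_{n,\epsilon}(x,y)|<\eta$ for $x,y\in\overline B_{d_n}(p_n,R)$.

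It remains to define $F_n$ on the tail and to match boundaries, which is where rough starlikeness is used decisively. Represent each $\xi\in\partial X_n^\epsilon$ by a geodesic ray $\gamma^n_\xi$ from $p_n$ and set $F_n(\xi):=f_n(\gamma^n_\xi(R))$ (and define $F_n$ on far interior points by the same radial projection to $S_{d_n}(p_n,R)$ followed by $f_n$); since $f_n(\gamma^n_\xi(R))$ sits at $d$-distance $\approx R$ from $p$, it lies $d_\epsilon$-within $\eta$ of $\partial X^\epsilon$. Conversely, given $\zeta\in\partial X^\epsilon$ with ray $\gamma_\zeta$, the $\tau$-density of $f_n$ produces $x_n\in\overline B_{d_n}(p_n,R)$ with $d(f_n(x_n),\gamma_\zeta(R))<\tau$, and rough starlikeness attaches $x_n$ to a ray to some $\xi_n\in\partial X_n^\epsilon$ with $F_n(\xi_n)$ $d_\epsilon$-close to $\zeta$; here $\delta$-hyperbolicity guarantees that rays with nearby initial segments land at nearby boundary points, so this is consistent with the $\eta$-isometry estimate and with the comparison $d_\epsilon(\cdot,\cdot)\asymp e^{-\epsilon(\cdot\mid\cdot)_p}$ on the boundary.

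The main obstacle is the second input together with its boundary version: the uniformized metric is an infimum over curves while $f_n$ is merely an almost-isometry on a bounded $d$-ball and is not continuous, so one must confine near-optimal $d_\epsilon$-geodesics to the region where $f_n$ is controlled --- exactly what the tail estimate provides --- and then pass from the merely \emph{multiplicative} visual-metric comparison, which is all the Gromov-product estimate gives at the boundary, to the \emph{additive} precision $\eta$ required by $d_{GHB}$. Managing this forces a careful ordering of parameters: fix $\eta$ first, then $R=R(\eta,\epsilon)$, then $n$ large enough that the Gromov-Hausdorff error $\tau=\tau(n,R)$ is negligible against $\eta$ and against the density gap $e^{-\epsilon R}$ on the bulk ball.
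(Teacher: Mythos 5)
Your proposal is correct and follows essentially the same route as the paper: the bulk/tail decomposition with the radial truncation at the sphere $S_{d_n}(p_n,R)$ and the tail bound $\int_R^\infty e^{-\epsilon t}\,dt$, the discretization of near-optimal $d_\epsilon$-curves (confined to a controlled larger $d$-ball) to upgrade the pointed Gromov--Hausdorff maps to additive $\eta$-isometries for the uniformized metrics, and the boundary matching via geodesic rays, rough starlikeness of $X_n$, and the Harnack inequality are precisely the paper's Lemmas~\ref{bhk}--\ref{key lemma of ball conv 3}, Proposition~\ref{ball-conv}, and the proof of Theorem~\ref{boundary conv}, fed into Proposition~\ref{isometry with boundary}. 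The only cosmetic difference is that the $\eta$-density of the bulk ball needs only properness and the ray description of the boundary (Remark~\ref{Gromov identification}), not rough starlikeness, which is exactly the justification your parenthetical argument actually uses.
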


\indent   
We remark that the stability of the Gromov hyperbolicity holds true, under the assumption that the limit exists for a given sequence. This assumption is valid if one considers a sequence of intrinsic $A$-uniform spaces endowed with quasihyperbolic metric where diameter of spaces and doubling constants with respect to original metrics are uniformly bounded, see Theorem \ref{fouth theorem} and Theorem \ref{Compactness for uniform spaces}. On the other hand, in general a pointed Gromov-Hausdorff limit may not exist for some sequence of Gromov hyperbolic spaces. \\
\indent It is clear that for given noncomplete precompact locally compact metric spaces $X$ and $Y$, we have 
\begin{equation}\label{converse not}
	d_{GH}(\overline{X}, \overline{Y})+ d_{GH}(\partial X, \partial Y) \leq d_{GHB}(X, Y).
\end{equation}
 Hence Theorem~\ref{second theorem} also states that $(\overline{X^{\epsilon}_n}, d_{n, \epsilon})_n$ and $(\partial_{d_{n, \epsilon}} X_n^{\epsilon}, d_{n, \epsilon})_n$ Gromov-Hausdorff converge to $(\overline{X^{\epsilon}}, d_{\epsilon})$ and $(\partial_{d_{\epsilon}} X^{\epsilon}, d_{\epsilon})$, respectively. On the other hand, the converse inequality of (\ref{converse not}) does not hold, see Remark~\ref{d_{GH} versus d_{GHB}} where we provide a simple example which also suggests that the sum in the left-hand side of \eqref{converse not} does not induce a metric. Also, by \cite[Proposition 4.13]{BHK}, for a given proper geodesic $M$-roughly starlike $\delta$-Gromov hyperbolic space $(X, d, p)$, the canonical bijective identification between the metric boundary of $X^{\epsilon}$ with respect to uniformization metric $d_{\epsilon}$, $(\partial_{d_{\epsilon}}X^{\epsilon}, d_{\epsilon})$, and the Gromov boundary of $X$ with the visual metric, $(\partial_{G}X, d_{p, \epsilon})$ are bilipschitz equivalent. Hence, Theorem~\ref{second theorem} can be seen as follows : If we change the visual metric $d_{p, \epsilon}$ of Gromov boundary to the metric $d_{\epsilon}$, which is  bilipschitz equivalent to $d_{p, \epsilon}$, then the sequence of Gromov boundaries $(\partial_G X_n, d_{n, \epsilon})$ converges to $(\partial_G X, d_{\epsilon})$. \\
\indent On the other hand, regarding the fourth question, in general the desired conclusion is false. Instead, we prove that if a sequence of uniform spaces $(\Omega_n, d_n)_n$ are all length spaces, then the quasihyperbolization is stable. 
To make the meaning precise, we state the last theorem.
\begin{thm}\label{fouth theorem}
	Let $(\Omega_n, d_n)_n \subseteq \mathcal{U}(A, R)$ and $(\Omega, d) \in \mathcal{M}$. Suppose $(\Omega_n, d_n)_n$ are all length spaces and $(\Omega_n, d_n)_n$ converges to $(\Omega, d)$ with respect to the Gromov-Hausdorff distance with boundary.  Then $(\Omega_n, k_n, p_n)_n$ is pointed Gromov-Hausdorff convergent to $(\Omega, k, p)$ for some $p_n \in \Omega_n$ and $p \in \Omega$, where $k_n$ and $k$ are quasihyperbolic metrics defined on $\Omega_n$ and $\Omega$, respectively.
\end{thm}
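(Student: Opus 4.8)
The plan is to realize the $d_{GHB}$-convergence by common embeddings, transfer it to the distance-to-boundary functions, and then transport near-minimizing curves for the quasihyperbolic metrics across the embeddings, keeping all relevant curves uniformly away from the boundary so that the weight $1/\dist(\cdot,\partial)$ stays bounded. Since $d_{GHB}(\Omega_n,\Omega)\to 0$, for each $n$ I would choose a metric space $Z_n$ and isometric embeddings $\overline{\Omega_n},\overline{\Omega}\hookrightarrow Z_n$ with $d_H^{Z_n}(\Omega_n,\Omega)+d_H^{Z_n}(\partial\Omega_n,\partial\Omega)<\eps_n\to 0$, and identify the spaces with their images. By Theorem~\ref{third theorem} the limit $\Omega$ lies in $\mathcal{U}(A,R)$, so $\Omega$ is itself an $A$-uniform space and $k$ is a finite, geodesic, proper metric on $\Omega$. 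Matching points within $\eps_n$ gives a relation ($x_n\sim x$ iff $d^{Z_n}(x_n,x)<\eps_n$) that is an $\eps_n$-correspondence for $d_n,d$. The first reduction is that it also controls the boundary distance: from $|\dist(a,S)-\dist(b,T)|\le d(a,b)+d_H(S,T)$ one gets $|\dist(x_n,\partial\Omega_n)-\dist(x,\partial\Omega)|\le 2\eps_n$ whenever $x_n\sim x$. I then fix a base point $p\in\Omega$ with $\dist(p,\partial\Omega)>0$ and choose $p_n\sim p$, so that $\dist(p_n,\partial\Omega_n)\to\dist(p,\partial\Omega)>0$.

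The core is the two-sided estimate $k_n(x_n,y_n)\to k(x,y)$ for $x_n\sim x$, $y_n\sim y$, uniform on compacta away from the boundary. The enabling observation, valid in any metric space, is that if $\gamma$ joins $x,y$ with weighted length at most $k(x,y)+\eta$, then every $z\in\gamma$ satisfies $\dist(z,\partial)\ge\sqrt{\dist(x,\partial)\dist(y,\partial)}\,e^{-(k(x,y)+\eta)/2}$; this follows by applying $k(x,z)\ge\log(\dist(x,\partial)/\dist(z,\partial))$ to both halves of $\gamma$. Combined with the uniform estimate $k_n(x_n,y_n)\le C_A\log(1+d_n(x_n,y_n)/\min\{\dist(x_n,\partial\Omega_n),\dist(y_n,\partial\Omega_n)\})$ from \cite{BHK} and the convergence of boundary distances, near-minimizing curves on both sides stay at distance bounded below by a positive constant $c=c(x,y,A)$, uniformly in $n$, from the respective boundaries. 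For the upper bound $\limsup_n k_n(x_n,y_n)\le k(x,y)$, I would take such a near-minimizer $\gamma$ in $\Omega$, discretize it finely into points $w_j$, replace each by $w_j^n\sim w_j$ in $\Omega_n$, and join consecutive $w_j^n$ by almost length-minimizing curves using that $\Omega_n$ is a length space; as consecutive points are $O(h+\eps_n)$ apart and sit at distance $\ge c/2$ from $\partial\Omega_n$, these arcs remain in the interior where the weight is within $O(h+\eps_n)$ of its value along $\gamma$, so the total weighted length is $k(x,y)+\eta+o(1)$ once $\eps_n\ll h$. The lower bound is symmetric, transporting a near-$k_n$-minimizer $\gamma_n$ into $\Omega$; the short connecting arcs may be taken inside $\overline{\Omega}$, which is a complete length space as the Gromov-Hausdorff limit of the complete length spaces $\overline{\Omega_n}$, and they automatically stay in $\Omega$ because they are short and away from $\partial\Omega$.

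Finally I would assemble these estimates into pointed Gromov-Hausdorff convergence. Since $k(p,x)\ge|\log(\dist(p,\partial\Omega)/\dist(x,\partial\Omega))|$, every ball $B_k(p,r)$ lies in the compact set $K_r=\{\dist(\cdot,\partial\Omega)\ge\dist(p,\partial\Omega)e^{-r}\}$, on which $d$ and $k$ are comparable and the weight is bounded, and the same holds in $\Omega_n$ with uniform constants. On $K_r$ and its counterparts all the error terms above are uniform, so the correspondence $\sim$ restricts to a $\delta$-isometry between $B_{k_n}(p_n,r)$ and $B_k(p,r)$ for $n$ large, which is precisely the pointed Gromov-Hausdorff convergence $(\Omega_n,k_n,p_n)\to(\Omega,k,p)$.

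I expect the main obstacle to be the curve-transport estimate, which requires simultaneously (i) keeping the transported curves away from the boundary, handled by the a priori lower bound on $\dist$ along near-geodesics, (ii) realizing nearby points by short curves, handled by the length-space hypothesis, and (iii) managing the order of limits so that the accumulated positional error $O((L/h)\eps_n)$ is beaten by taking $\eps_n\ll h$. The length-space assumption is exactly what is missing in the general case and is what makes step (ii), and hence the whole transport, possible.
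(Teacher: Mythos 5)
Your proposal is correct and follows essentially the same route as the paper's own proof: realize the convergence in a common ambient space (Proposition~\ref{conv-dist-equivalent}), show that quasihyperbolic balls and near-geodesics stay uniformly away from the boundary via the logarithmic lower bound on $k$ (Proposition~\ref{key lemma of convergence}) together with the upper bound of \cite[Lemma 2.13]{BHK}, discretize near-minimizing curves, transport the partition points across the correspondence and reconnect them by short curves using the length-space hypothesis, and assemble the resulting two-sided Riemann-sum estimates into pointed Gromov--Hausdorff convergence (Proposition~\ref{key lemma of quasihyperbolization 3} and Theorem~\ref{p-GH of uniform spaces}). The only deviations are cosmetic: you use near-minimizers where the paper uses quasihyperbolic geodesics split via \cite[Theorem 2.10]{BHK}, and you justify the existence of short connecting curves on the limit side by observing that $\overline{\Omega}$ is a compact length space (a Gromov--Hausdorff limit of the compact length spaces $\overline{\Omega}_n$), which is in fact a more careful treatment than the paper's bare assertion that $(\Omega, d)$ is a length space.
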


\indent The rest of this paper consists of six parts. In Section 2, we review basic concepts and propositions needed to state our results. In Section 3, we define the Gromov-Hausdorff distance with boundary $d_{GHB}$ on $\mathcal{M}$ and prove the first assertion of Theorem~\ref{third theorem}. We also explore some basic properties of $d_{GHB}$ that the Gromov-Hausdorff distance also has. In Section 4, we prove the second assertion of Theorem~\ref{third theorem}. A compactness theorem for $\mathcal{U}(A, R)$ is also provided. In Section 5 we prove Theorem~\ref{first theorem}. In section 6, we prove Theorem~\ref{second theorem}.  In Section 7, we give a proof of Theorem~\ref{fouth theorem}. 

\subsubsection*{\hfil Acknowledgement \hfil}
The author would like to thank Nageswari Shanmugalingam and Jeff  Lindquist for countless helpful discussions and  for carefully reading the manuscript. He also would like to appreciate his former advisor Atsushi Katsuda who brought him to this research area and gave some important ideas, remarks on negatively curved spaces and an alternative proof of Theorem \ref{second theorem}, which were invaluable. The author's research was partially supported by the grant DMS $\# 1800161$ from NSF (U.S.A.).

\section{Preliminaries}
We fix some notations. Let $(X, d)$ be a metric space. For $x \in X$ and $r>0$, we set $B_d(x,r):=\{y\in X\, |\, d(x,y)<r\}$ and $\bar{B}_d(x,r):=\{y\in X\, |\, d(x,y)\leq r\}$. The length of a curve $\gamma$ with respect to $d$ is denoted by $l_d(\gamma)$. The minimum and the maximum of $n$ real numbers $(a_i)_{i=1}^{n}$ is denoted by $a_1\wedge \cdots \wedge a_n$ and $a_1 \vee \cdots \vee a_n$, respectively. 
We first review the Gromov-Hausdorff distance and the pointed Gromov-Hausdorff convergence.
\begin{defn}($\epsilon$-neighborhood)
	 Let $\epsilon \geq 0$ be given and $X$ be a metric space. An $\epsilon$-neighborhood of a given set $A \subseteq X$, denoted by $A_{\epsilon}$, is defined by
	 \[
	 A_{\epsilon}:=\{x \in X\  |\  \text{There exists}\  y \in A \ \text{such that}  \ d(x, y)\leq \epsilon. \}.
	 \] 
	 Note that if $\epsilon=0$, then $A_{0}=A$ for every set $A \subseteq X$.
\end{defn}
\begin{defn}(Hausdorff distance)
	 Let $A, B \subseteq X$ be given. The Hausdorff distance, denoted by $d_{H}(A, B)$, is defined by
	 \[
	 d_{H}(A, B):=\inf \{r \geq 0  \  |\  A \subseteq B_r \  \text{and}  \  B \subseteq A_r \}.
	 \] 
	 When we want to emphasize which space or metric is used, we sometimes write $d^{X}_{H}$ or $d^{d}_{H}$.
\end{defn}
Using the Hausdorff distance, we define the Gromov-Hausdorff distance.
\begin{defn}(Gromov-Hausdorff distance)
	 Let $X, Y$ be bounded metric spaces. The Gomov-Hausdorff distance, denoted by $d_{GH}(X, Y)$, is defined by
	 \begin{equation}
	 d_{GH}(X, Y):=\inf \ d^{Z}_{H}(\varphi(X), \psi(Y))
  \notag
 \end{equation}
 where infimum is taken over all metric spaces $Z$ and isometric embeddings $\varphi : X \to Z$ and $\psi : Y \to Z$. 
\end{defn}
	We remark that $d_{GH}(X, \overline{X})=0$ where $\overline{X}$ is the completion of $X$. It is also well-known that for compact metric spaces $X$ and $Y$, $d_{GH}(X, Y)=0$ if and only if there exists an isometry from $X$ to $Y$. Moreover, the Gromov-Hausdorff distance $d_{GH}$ is a metric on the set of all isometry classes of compact metric spaces, see \cite{HKST} and \cite{BBI}.

\begin{defn}
Let $(X, d)$ be a metric space. Define 
	\begin{equation}
 	Net_{\epsilon}(X) := \inf \left\lbrace |M| \;\middle|\;
  \begin{tabular}{@{}l@{}}
   $M$ is an $\epsilon$-net in $X$
   \end{tabular}
 \right\rbrace \notag 
 \end{equation}
 and 
 \begin{equation}
 	Sep_{\epsilon}(X) := \sup \left\lbrace |M| \;\middle|\;
  \begin{tabular}{@{}l@{}}
   $M$ is $\epsilon$-separated in $X$
   \end{tabular}
 \right\rbrace. \notag
 \end{equation}
 where $|\cdot|$ is the cardinality of a set. Here we say that $M$ is an $\epsilon$-net in $X$ if $X \subseteq M_{\epsilon}$ and $M$ is $\epsilon$-separated in $X$ if $d(x, y) \geq \epsilon$ for every $x, y \in M$ with $x \neq y$.
\end{defn} 
The following two useful propositions are due to Gromov, see \cite[p.64-65]{Gr}.
\begin{prop}\label{cap-cov}
	Let $\mathcal{C}$ be a subset of the set of all compact metric spaces. Then, TFAE : 
	 \begin{enumerate}
	 	\item There exist $D > 0$ and $N : (0, \infty) \to \mathbb{N}$ such that for every $X \in \mathcal{C}$, $\diam(X)\leq D$ and for every $\epsilon>0$, $Net_{\epsilon}(X) \leq N(\epsilon)$.
	 	\item There exist $D > 0$ and $N : (0, \infty) \to \mathbb{N}$ such that for every $X \in \mathcal{C}$, $\diam(X)\leq D$ and for every $\epsilon>0$, $Sep_{\epsilon}(X) \leq N(\epsilon)$.
	 	\item $\mathcal{C}$ is totally bounded with respect to $d_{GH}$.
	 \end{enumerate}
\end{prop}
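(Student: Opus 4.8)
The plan is to establish the cyclic equivalence by proving the comparison $(1)\Leftrightarrow(2)$ first, then $(1)\Rightarrow(3)$, and finally $(3)\Rightarrow(1)$. Throughout, the diameter bound $\diam(X)\le D$ is common to all three conditions, so the real content is the interplay between covering numbers $Net_\epsilon$, packing numbers $Sep_\epsilon$, and $d_{GH}$.

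For $(1)\Leftrightarrow(2)$, I would record the two elementary comparison inequalities. First, a \emph{maximal} $\epsilon$-separated set $M$ in $X$ is automatically an $\epsilon$-net: if some $x\in X$ had $d(x,y)>\epsilon$ for all $y\in M$, then $M\cup\{x\}$ would still be $\epsilon$-separated, contradicting maximality, so $X\subseteq M_\epsilon$ and hence $Net_\epsilon(X)\le Sep_\epsilon(X)$. Second, if $N$ is an $\epsilon$-net and $S$ is $3\epsilon$-separated, the assignment of each $s\in S$ to a nearest point of $N$ is injective (two points of $S$ within $\epsilon$ of a common net point would be at distance $\le 2\epsilon<3\epsilon$), giving $Sep_{3\epsilon}(X)\le Net_\epsilon(X)$. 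These two inequalities show immediately that a uniform bound on $Net_\epsilon$ and a uniform bound on $Sep_\epsilon$ are equivalent, up to rescaling the controlling function $N$.

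For $(1)\Rightarrow(3)$, fix $\epsilon>0$ and produce a finite $\epsilon$-net of $\mathcal C$ for $d_{GH}$. For each $X\in\mathcal C$ choose an $\epsilon/3$-net $P_X\subseteq X$ with $|P_X|\le N(\epsilon/3)=:k$; working in the ambient space $X$ itself gives $d_{GH}(X,P_X)\le d^X_H(X,P_X)\le \epsilon/3$. It then suffices to find a finite $d_{GH}$-net for the family $\{P_X\}$. Labelling the points of each $P_X$ by $1,\dots,m$ with $m\le k$, encode $P_X$ by its distance matrix $(d_{ij})$ with entries in $[0,D]$, and round each entry to the nearest multiple of $\epsilon/3$. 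Only finitely many rounded matrices occur across all $m\le k$; choose one representative $P_X$ for each. If $P_X$ and $P_{X'}$ share a rounded matrix, the identity correspondence on their labels has distortion $\le\epsilon/3$, so $d_{GH}(P_X,P_{X'})\le\epsilon/6$. The triangle inequality for $d_{GH}$ then places every $X$ within $\epsilon/3+\epsilon/6<\epsilon$ of one of the finitely many representatives, proving total boundedness. This discretization-of-distance-matrices step is where I expect the \textbf{main obstacle}: one must pass from an abstract uniform covering bound to a genuinely finite set of model spaces, which forces careful bookkeeping of how rounding perturbs $d_{GH}$ (and a reminder that rounded arrays need not satisfy the triangle inequality, which is why actual representatives $P_X$ are retained rather than the rounded matrices).

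For $(3)\Rightarrow(1)$, total boundedness forces boundedness, and comparing with a one-point space $\ast$ via $d_{GH}(X,\ast)=\tfrac12\diam(X)$ yields a uniform diameter bound $D$. For the covering function, fix $\epsilon>0$ and take a finite $\epsilon/2$-net $\{Y_1,\dots,Y_m\}$ of $\mathcal C$ in $d_{GH}$; each $Y_i$ is compact, hence has a finite $\epsilon/2$-net of size $n_i$. Given $X\in\mathcal C$, pick $Y_i$ with $d_{GH}(X,Y_i)<\epsilon/2$, so by the standard correspondence characterization there is a relation $R\subseteq X\times Y_i$ of distortion $\dis(R)<\epsilon$ pairing each point of $X$ with some point of $Y_i$. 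Pulling an $\epsilon/2$-net of $Y_i$ back through $R$ yields a $3\epsilon/2$-net of $X$ of cardinality $\le n_i\le\max_i n_i$, so $Net_{3\epsilon/2}(X)$ is bounded independently of $X$; replacing $\epsilon$ by $2\epsilon/3$ throughout produces the required function $N$, completing the cycle.
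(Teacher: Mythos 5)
Your proof is correct, and there is nothing in the paper to compare it against: the paper states this proposition without proof, attributing it to Gromov \cite[p.64--65]{Gr}. Your argument --- maximal $\epsilon$-separated sets to pass between $Net_\epsilon$ and $Sep_\epsilon$, discretization of distance matrices of finite nets (keeping actual representatives rather than the rounded matrices) for $(1)\Rightarrow(3)$, and pulling a net back through a correspondence of distortion $<\epsilon$ for $(3)\Rightarrow(1)$ --- is precisely the canonical proof of Gromov's precompactness criterion as found in the sources the paper cites, e.g.\ \cite{BBI}, so nothing needs to be flagged.
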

\begin{prop}\label{Gromov embedding}
	Let $\mathcal{C}$ be a subset of the set of all compact metric spaces. If $\mathcal{C}$ satisfies any of the conditions in Proposition~\ref{cap-cov}, then there is a compact set $K \subseteq l^{\infty}$ such that every $X \in \mathcal{C}$ admits an isometric embedding into $K$.
\end{prop}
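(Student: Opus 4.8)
The plan is to invoke condition (3) of Proposition~\ref{cap-cov}, namely that $\mathcal{C}$ is totally bounded with respect to $d_{GH}$, together with the uniform diameter bound $\diam(X)\le D$. The target compact set $K\subseteq\ell^{\infty}$ will be produced as the closure of a single \emph{separable} metric space into which a countable $d_{GH}$-dense subfamily of $\mathcal{C}$ is simultaneously and isometrically glued; every remaining member of $\mathcal{C}$ is then captured by a limiting argument inside the hyperspace of $K$.

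First I would extract, using total boundedness (hence separability of $(\mathcal{C},d_{GH})$), a countable family $\mathcal{C}_0=\{X_j\}_j\subseteq\mathcal{C}$ that is $d_{GH}$-dense in $\mathcal{C}$. For each pair $X_i,X_j$ I fix a near-optimal common isometric realization witnessing $d_{GH}(X_i,X_j)$, i.e.\ a metric on $X_i\sqcup X_j$ extending the two given metrics whose Hausdorff distance between the two pieces is essentially $d_{GH}(X_i,X_j)$. I then assemble all the $X_j$ into one pseudometric space $\mathbf{W}$ on the disjoint union $\bigsqcup_j X_j$, defining the distance between points of different pieces as an infimum over finite chains built from the within-piece distances and the cross terms coming from these realizations.

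Second, I must verify that $\mathbf{W}$ is an honest metric space in which each $X_j$ sits isometrically (the chain infimum must not shortcut the original distance inside any $X_j$) and in which $d_H^{\mathbf{W}}(X_i,X_j)$ stays comparable to $d_{GH}(X_i,X_j)$; a standard device is to add a small slack to each cross term so that no chain passing through other pieces can beat the direct intrinsic metric. Granting this, total boundedness of $\mathbf{W}$ follows from condition (3): given $\delta>0$, pick a finite $\delta$-net $Y_1,\dots,Y_m$ of $(\mathcal{C},d_{GH})$; each piece $X_j$ lies within $d_H^{\mathbf{W}}$-distance about $\delta$ of a realization of some $Y_i$, and the union of finite $\delta$-nets of the compact spaces $Y_i$ is then a finite net for $\mathbf{W}$. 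Since $\mathbf{W}$ is separable it embeds isometrically into $\ell^{\infty}$ by the Kuratowski embedding, so I set $K:=\overline{\mathbf{W}}\subseteq\ell^{\infty}$, which is compact because $\mathbf{W}$ is totally bounded, and by construction each $X_j$ embeds isometrically into $K$.

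Finally, for an arbitrary $X\in\mathcal{C}$ I choose $X_{j_k}\in\mathcal{C}_0$ with $d_{GH}(X_{j_k},X)\to0$; their isometric copies $A_k\subseteq K$ are compact, so by the compactness of the hyperspace of nonempty closed subsets of $K$ under the Hausdorff metric a subsequence $d_H^{K}$-converges to some compact $A\subseteq K$. Then $d_{GH}(X,A)\le d_{GH}(X,X_{j_k})+d_H^{K}(A_k,A)\to0$, and since $X$ and $A$ are both compact this forces an isometry of $X$ onto $A\subseteq K$, completing the argument. The main obstacle is the middle step: arranging the simultaneous gluing $\mathbf{W}$ so that it is genuinely a metric, each $X_j$ is isometrically embedded, and the whole space is totally bounded. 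This is precisely where the uniform control furnished by Proposition~\ref{cap-cov} must be converted into one coherent realization of the entire dense family, and the bookkeeping with the slack terms and chains is the delicate part.
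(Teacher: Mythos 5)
There is a genuine gap, and it sits exactly where you flagged ``the delicate part'': the all-pairs gluing of the dense family $\{X_j\}$ with independently chosen near-optimal realizations cannot be repaired by adding a small slack to the cross terms. The problem is that the pairwise identifications can be inconsistent around cycles. Concretely, take three isometric copies $X_1,X_2,X_3$ of the two-point space $\{p,q\}$ with $d(p,q)=D$, so all pairwise Gromov--Hausdorff distances are $0$. Choose the realizations $W_{12}$ with $d(p_1,p_2)=d(q_1,q_2)=\epsilon$, $W_{13}$ with $d(p_1,p_3)=d(q_1,q_3)=\epsilon$, but $W_{23}$ with the \emph{flipped} identification $d(p_2,q_3)=d(q_2,p_3)=\epsilon$; each is a legitimate near-optimal realization, since the two-point space has a nontrivial self-isometry. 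With slack $s$ on each cross term, the chain $p_1\to p_2\to q_3\to q_1$ has total cost $3(\epsilon+s)$, so the chain construction forces $d_{\mathbf{W}}(p_1,q_1)\le 3(\epsilon+s)$, which is $<D=d_{X_1}(p_1,q_1)$ whenever $\epsilon+s<D/3$. Thus either $X_1$ is not isometrically embedded (if within-piece distances are also defined by chains) or $\mathbf{W}$ fails the triangle inequality (if they are not). To block such shortcuts the slack would have to be comparable to the diameter bound $D$; but then every cross-piece Hausdorff distance in $\mathbf{W}$ is inflated by $s$, so your finite nets can never be finer than $s$ and the total boundedness of $\mathbf{W}$ is lost. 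No uniform choice of slack can be simultaneously small enough for total boundedness and large enough to kill the cycle shortcuts, so the middle step fails as stated.

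The repair is to glue along a \emph{tree} instead of along all pairs, which is essentially what Gromov's construction does (the paper itself gives no proof; it cites \cite[p.64-65]{Gr}). For each $k$ let $\mathcal{N}_k$ be a finite $2^{-k}$-net of $(\mathcal{C},d_{GH})$ drawn from the dense family (enlarged so that every $X_j$ eventually appears), and glue each member of $\mathcal{N}_{k+1}$ to a nearest member of $\mathcal{N}_k$ by a single realization, extending the metric at each stage by the standard gluing formula $d(z,x):=\inf_{y}\left(d(z,y)+d_{W}(y,x)\right)$, where $y$ ranges over the parent piece. Since there are no cycles, this inductive gluing keeps every previously attached piece isometrically embedded (this is the usual gluing lemma, with no slack needed), and each level-$(k+1)$ piece lies within Hausdorff distance of order $2^{-k}$ of its parent, hence, summing the geometric series, within order $2^{-k}$ of a level-$k$ piece; as each level is a finite union of compact sets, $\mathbf{W}$ is totally bounded. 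With this replacement, the rest of your argument is correct and completes the proof: $\mathbf{W}$ is separable and bounded, so it embeds isometrically into $l^{\infty}$, $K:=\overline{\mathbf{W}}$ is compact, and for an arbitrary $X\in\mathcal{C}$ the copies of an approximating sequence $X_{j_k}$ subconverge in the hyperspace of $K$ (Blaschke) to a compact $A\subseteq K$ with $d_{GH}(X,A)=0$, which for compact spaces yields an isometry of $X$ onto $A$.
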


Next, we define the pointed Gromov-Hausdorff convergence. Although some equivalent notions can be found in many papers, we follow the pointed Gromov-Hausdorff convergence in \cite[Definition 8.1.1]{BBI}  and  \cite[Definition 11.3.1]{HKST}. One can also check \cite{H} for other equivalent conditions and detailed explanation on the pointed Gromov-Hausdorff topology.
\begin{defn}(Pointed Gromov-Hausdorff convergence) \label{p-GH}
We say that \emph{$(X_n, d_n, p_n)_n$ is pointed Gromov-Hausdorff convergent to $ (X, d, p)$} if for every $r>0$ and $0<\epsilon<r$, there exists $N \in \mathbb{N}$ such that for each $n\geq N$, there exists $f^{\epsilon}_{n}: B_{d_n}(p_{n}, r) \to X$ such that 
\begin{enumerate}
	\item $f^{\epsilon}_{n}(p_{n})=p$,
	\item $|d(f^{\epsilon}_{n}(x), f^{\epsilon}_{n}(y))-d_n(x,y)|<\epsilon$ for all $x, y \in B_{d_n}(p_{n}, r)$,
	\item $B_{d}(p, r-\epsilon) \subseteq f^{\epsilon}_{n}(B_{d_n}(p_{n},r))_{\epsilon}$.
\end{enumerate}
\end{defn}
\begin{rmk}\label{p-GH def remark}
	A metric space $(X, d)$ is said be a length space if for every pair of points $x, y \in X$, the infimum of the lengths of curves from $x$ to $y$  coincides with $d(x, y)$. In the above definition, if the pointed metric spaces $(X_n, d_n, p_n)_n$ are all length spaces, then we can further assume that $f^{\epsilon}_{n}(B_{d_n}(p_{n}, r)) \subseteq B_{d}(p, r)$ for every $r>0$ and $0<\epsilon<r$. We leave it to the reader to verify this.
\end{rmk}

We set the terminology to state the compactness theorem for pointed metric spaces. A metric space $(X, d)$ is said to be proper if every bounded closed subset is compact. If each pair of points in $(X, d)$ is joined by a curve whose length equals the distance between them, we say that $(X, d)$ is a geodesic metric space.
\begin{defn}
	A sequence of pointed metric spaces $(X_n, d_n, p_n)_n$ is said to be \emph{pointed uniformly totally bounded} if there is a function $N : (0, \infty) \times (0, \infty) \to (0, \infty)$ such that, for each $0< \epsilon < R$ and $n \in \mathbb{N}$, the closed ball $\bar{B}_{d_n}(p_n, R)$ in $X_n$ contains an $\epsilon$-net of cardinality at most $N(\epsilon, R)$.
\end{defn}

The compactness theorem for pointed proper metric spaces first appeared in \cite[p.64]{Gr}. We state the compactness theorem under the pointed Gromov-Hausdorff convergence based on \cite[Theorem 11.3.16]{HKST}.
\begin{prop}\label{compactness}
Let $(X_n, d_n, p_n)_{n}$ be a sequence of pointed proper metric spaces. Suppose $(X_n, d_n, p_n)_{n}$ is pointed uniformly totally bounded. Then there exists a subsequence $(X_{n_{k}},d_{n_k}, p_{n_{k}})_{k}$ which is pointed Gromov-Hausdorff convergent to a complete metric space $(X, d, p)$.
\end{prop}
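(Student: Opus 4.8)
The plan is to run the classical Gromov diagonal argument, now keeping careful track of the basepoints and of the exhaustion of each space by balls, and to realize the limit space as the completion of a countable metric space built from limiting net-distances. The pointed uniform total boundedness hypothesis is precisely what makes the relevant net cardinalities uniform in $n$, and properness is what makes each closed ball compact, so that Gromov's precompactness criterion (Proposition~\ref{cap-cov}) guarantees that each ball-family is precompact for $d_{GH}$; the explicit construction below is the pointed refinement of this fact.

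First I would fix uniform nets. For each integer $j \geq 1$ I use the hypothesis to choose, in every $X_n$, a finite $(1/j)$-net $S_n^j$ of $\bar B_{d_n}(p_n, j)$ with $p_n \in S_n^j$ and $|S_n^j| \leq N(1/j, j) + 1 =: M_j$, a bound independent of $n$. Enumerating each net (with repetitions if necessary) as $S_n^j = \{x_{n,1}^j, \dots, x_{n,M_j}^j\}$ with $x_{n,1}^j = p_n$, I obtain for every $n$ a countable labeled family $\{x_{n,i}^j\}$ of points of $X_n$ whose union over $j \geq R$ is dense in each ball $\bar B_{d_n}(p_n, R)$ and each of whose members has distance at most $j$ from $p_n$. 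Next I diagonalize: for each fixed pair of labels $(i,j),(i',j')$ the real sequence $\big(d_n(x_{n,i}^j, x_{n,i'}^{j'})\big)_n$ is bounded by $j+j'$, so a Cantor diagonal extraction yields a subsequence $(n_k)$ along which every such distance converges, to a limit $\rho\big((i,j),(i',j')\big)$. These limits are symmetric and satisfy the triangle inequality, hence define a pseudometric on the countable label set $I = \{(i,j)\}$. Passing to the metric quotient (identifying labels at pseudodistance $0$) produces a countable metric space $(A, \rho)$; I let $p \in A$ be the common class of the basepoint labels $(1,j)$ and let $(X, d, p)$ be the completion of $(A, \rho)$ with basepoint $p$. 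By construction $X$ is complete, which is one of the two required conclusions.

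It then remains to verify the three conditions of Definition~\ref{p-GH} for the subsequence $(X_{n_k}, d_{n_k}, p_{n_k})_k$. Given $r > 0$ and $0 < \epsilon < r$, I pick $j > r$ with $1/j$ small relative to $\epsilon$ and define $f_{n_k}^\epsilon$ on $B_{d_{n_k}}(p_{n_k}, r)$ by sending $x$ to the limit class in $X$ of a nearest net point $x_{n_k, i}^j$. Then $f_{n_k}^\epsilon(p_{n_k}) = p$, giving condition (1); the almost-isometry estimate (2) follows because the chosen net points lie within $1/j$ of their arguments while their mutual distances converge to the corresponding $\rho$-distances as $k \to \infty$; and the density condition (3) holds because, for $k$ large, the images $\{x_{n_k, i}^j\}_i$ form an $\epsilon$-net of the portion of $X$ coming from the $j$-th net, which is $\epsilon$-dense in $B_d(p, r - \epsilon)$ since $A$ is dense in $X$ and was assembled from exactly these nets.

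The main obstacle I expect is the bookkeeping in this last step: one must choose the net fineness $j$ and the position $k$ along the subsequence in the correct order, so that the two sources of error, namely the net mesh $1/j$ and the distance-convergence error $|d_{n_k}(\cdot,\cdot) - \rho(\cdot,\cdot)|$, are simultaneously dominated by $\epsilon$, and so that condition (3) genuinely exhausts all of $B_d(p, r-\epsilon)$ rather than only the dense subset $A$. Checking that $f_{n_k}^\epsilon$ is well defined into $X$ (i.e.\ respects the zero-distance quotient) and that the limiting distances descend to an honest metric on the completion are the remaining routine but care-demanding points.
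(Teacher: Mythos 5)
The paper itself gives no proof of Proposition~\ref{compactness}: it is quoted from \cite[Theorem 11.3.16]{HKST}, going back to \cite[p.~64]{Gr}, and the proof in those references is precisely the diagonal construction you describe — uniform finite nets in the balls $\bar B_{d_n}(p_n, j)$, Cantor extraction so that all labeled pairwise distances converge to a pseudometric $\rho$, the metric quotient of the countable label set, completion to get $(X,d,p)$, and verification of the three conditions of Definition~\ref{p-GH}. Your argument is therefore the standard one and is sound, with the following two points worth making explicit. First, at the step you yourself flag: the label of the nearest scale-$j$ net point to a given point varies with $k$, so the bound $d_{n_k}(\cdot,\cdot)\leq 1/j$ does not directly pass to $\rho$; the standard repair is a pigeonhole argument (there are only $M_j$ labels, so some label $i^*$ recurs along infinitely many $k$, and along those $k$ the bound survives the limit, giving $\rho\bigl((i',j'),(i^*,j)\bigr)\leq 1/j$). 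With $1/j<\epsilon/3$ fixed first and then $k$ taken large enough that the finitely many scale-$j$ label distances are within $\epsilon/3$ of their $\rho$-limits, conditions (2) and (3) follow, and the closed $\epsilon$-neighborhood in condition (3) absorbs the passage from the dense set $A$ to its completion. Second, note that your proof never actually uses properness — uniform total boundedness alone produces the complete limit, which is all the statement claims (properness of the limit, used later via Remark~\ref{limit is fine}, needs the properness hypothesis and a separate argument); your appeal to Proposition~\ref{cap-cov} in the opening paragraph is likewise motivational rather than load-bearing, since the explicit nets replace it.
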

\begin{rmk}\label{limit is fine}
	If pointed metric spaces $(X_n, d_n, p_n)_{n}$ are all assumed to be proper and geodesic, then the limit pointed complete metric space $(X, d, p)$ is also proper and geodesic by the Hopf-Rinow theorem, see \cite[Proposition 11.3.12 and Proposition 11.3.14]{HKST} and \cite[Theorem 7.5.1]{BBI} for further discussion on the pointed Gromov-Hausdorff convergence.
\end{rmk}

\begin{defn}[Gromov product]
	Let $X$ be a metric space. The \emph{Gromov product} of two points $x, y \in X$ with respect to $m \in X$ is defined by
	\[
	 (x|y)_{m}:= \frac{1}{2}(d(m, x)+d(m, y)-d(x, y)).
	\]
\end{defn}
 
 In order to use uniformization results from \cite{BHK}, we assume $\delta$-Gromov hyperbolic spaces to be geodesic even though it is not required in the following four-point condition. Note that there are equivalent definitions of $\delta$-Gromov hyperbolicity, such as $\delta$-thinness of a geodesic triangle. See \cite{Bo}, \cite{BH}, \cite{GH}, and \cite{Jussi} for more discussion about Gromov hyperbolic spaces. 
\begin{defn}[Gromov hyperbolic space]
Let $(X, d)$ be a unbounded proper geodesic metric space. We say that $X$ is a \emph{$\delta$-Gromov hyperbolic space} if for all $x, y, z, m \in X$, 
\[
(x|z)_{m}\geq (x|y)_{m}\wedge(y|z)_{m}-\delta.
\]
\end{defn}

\begin{defn}($M$-Roughly starlike property)
	We say that a  pointed metric space $(X, d, p)$ is \emph{$M$-roughly starlike} if for every $x \in X$, there exists a geodesic ray $\gamma : [0, \infty) \to X$ with $\gamma(0)=p$ such that $\dist(x, \gamma):=\inf\limits_{t \in [0, \infty)}d(x, \gamma(t)) \leq M$.
\end{defn}

\begin{defn}(Quasiisometric embedding/path)
	Let $\mu \geq 1$ and $C>0$ be constants. We say that a map $f:X \to Y$ is a \emph{$(\mu, C)$-quasi-isometric embedding} if for every pair of points $x, y \in X$,
	\[
	\mu^{-1} d(x, y)-C\leq d(f(x), f(y))\leq \mu d(x, y)+C
	\] holds.
	In particular, if $X$ is an interval $[a, b]$, then $f : [a, b] \to Y$ satisfying the above condition is called a \emph{$(\mu, C)$-quasiisometric path}. Also, if a map $f : X \to Y$ is $(1, \epsilon)$-isometric embedding and $Y \subseteq (f(X))_{\epsilon}$, we call $f$ an \emph{$\epsilon$-isometry}.
\end{defn}
\begin{rmk}
	Given compact metric spaces $X$ and $Y$, the existence of an $\epsilon$-isometry $f : X \to Y$ implies that $d_{GH}(X, Y) \leq 3 \epsilon$. Conversely, if $d_{GH}(X, Y) < \epsilon$, then there exists a $5\epsilon$-isometry $f  : X \to Y$, see \cite[Proposition 7.4.11]{BBI} for the proof. We use this relation to prove the convergence of uniformized spaces and boundaries. 
\end{rmk}


We next review the definitons of uniform spaces, uniformization, and quasihyperbolization. These definitions can also be found in \cite{BHK}.

\begin{defn}($A$-uniform curve)\label{uniform curve}
	Let  $A>0$ and a metric space $(\Omega, d)$ be given. We say that a curve $\gamma : [0, 1] \to \Omega$ is an \emph{$A$-uniform curve} if the curve $\gamma$ satisfies 
	\begin{enumerate}
		\item $l_{d}(\gamma) \leq A d(x, y)$,
		\item $l_{d}(\gamma|_{[0, t]})\wedge l_{d}(\gamma|_{[t, 1]}) \leq A$ dist$(\gamma(t), \partial \Omega)$ \ \ \ for every $t \in [0, 1]$.
	\end{enumerate}
\end{defn}

\begin{rmk}
	This definition does not depend on the parametrization of a curve $\gamma$.
\end{rmk}

\begin{defn}($A$-uniform space)\label{uniform space}
	 A locally compact, rectifiably connected noncomplete metric space $(\Omega, d)$ is called an \emph{$A$-uniform space} if every pair of points in $\Omega$ can be connected by an $A$-uniform curve.
\end{defn}
\begin{defn}(Uniformization)\label{uniformaization}
	Let $(X, d)$ be a $\delta$-Gromov hyperbolic space. Fix a base point $p \in X$ and $\epsilon>0$. The \emph{uniformization metric $d_{\epsilon}$} is defined by
	\[
	d_{\epsilon}(x, y):= \inf\limits_{\gamma}\int^{l_d(\gamma)}_{0}e^{-\epsilon d(p, \gamma(t))}\,dt
	\]
	where infimum is taken over all arc-length parametrized curves $\gamma$ from $x$ to $y$. The metric space $(X, d_{\epsilon})$ is called a \emph{uniformized space}. We denote this  uniformized space $(X, d_{\epsilon})$ by $(X^{\epsilon}, d_{\epsilon})$.  We note that a \emph{Harnack type inequality}
	\begin{equation}\label{Harnack}
		e^{-\epsilon d(x, y)} \leq \frac{e^{-\epsilon d(p,x)}}{e^{-\epsilon d(p,y)}} \leq e^{\epsilon d(x, y)}
	\end{equation} 
	holds for every $x, y \in X$, see \cite[Chapter 5]{BHK}
\end{defn}
\begin{rmk}\label{epsilon constraint}
	As mentioned in the previous section, it has been shown in \cite[Proposition 4.5 and Chapter 5]{BHK} that there exists $\epsilon_0(\delta)>0$ such that  for each $0<\epsilon \leq \epsilon_{0}(\delta)$, the uniformized space $(X^{\epsilon}, d_{\epsilon})$ is a uniform space. Throughout this paper, we fix the constant $\epsilon_{0}(\delta)$ and use it in the rest of this paper.
	\end{rmk}

\begin{defn}(Quasihyperbolizaton)
	Let $(\Omega, d)$ be an $A$-uniform space.  The \emph{quasihyperbolic metric $k$} is defined by
	\[
	k(x, y):= \inf\limits_{\gamma}\int^{l_d(\gamma)}_{0}\frac{1}{d(\gamma(t))}\,dt
	\]
	where infimum is taken over all arc-length parametrized curves $\gamma$ from $x$ to $y$, and $d(\cdot):= \dist( \cdot , \partial \Omega)$.  
\end{defn}
By \cite[Theorem 3.6]{BHK}, if $(\Omega, d)$ is a uniform space, then $(\Omega, k)$ is a proper geodesic $\delta$-Gromov hyperbolic space for some $\delta=\delta(A)$. Moreover, if $(\Omega, d)$ is bounded, then $(\Omega, k)$ is $M$-roughly starlike for some $M=M(A)$.

In order to prove the boundary convergence in the next section, we briefly  review the connection between the Gromov boundary and the metric boundary $(\partial_{d_{\epsilon}}X^{\epsilon}, d_{\epsilon})$. We refer the interested reader to \cite{BHK} for a detailed discussion and \cite{Jussi} and \cite{BH} for Gromov boundary.
\begin{defn}(Gromov boundary)
	We say that for a fixed point $p \in X$, a sequence $(x_{n})_{n}$ is a \emph{Gromov sequence} if $(x_{i}| x_{j})_{p}\to \infty$ as $i, j \to \infty$. Also we say two Gromov sequences $(x_{n})_{n}$ and $(y_{n})_{n}$ are equivalent if $(x_{n}|y_{n})_{p} \to \infty$ as $n \to \infty$. We write $(x_n)_n \sim (y_n)_n$ if they are equivalent. Since the space $X$ is $\delta$-Gromov hyperbolic, this is an equivalence relation. Hence we can consider the quotient space of the set of all Gromov sequences, denoted by $\partial_{G}X$. We call $\partial_{G}X$ the \emph{Gromov boundary}. 
\end{defn}
Let $(X, d)$ be a $\delta$-Gromov hyperbolic space and $p \in X$ be a fixed base point. The Gromov product is extended to $X\cup \partial_{G}X$. 
We refer the reader to \cite{BH} and \cite{Jussi2} for more discussion on extension of the definition of Gromov product to $X\cup \partial_{G}X$. 
\begin{defn}(The metric on $\partial_{G}X$)
	Let $(X, d)$ be a $\delta$-Gromov hyperbolic space. Let $p \in X$ be a fixed point and $0<\epsilon<1/(5\delta)$. For $x, y \in \partial_{G} X$, define 
	\begin{equation}
		\rho_{p, \epsilon}(x, y):= e^{-\epsilon(x|y)_{p}}.
	\end{equation} 
	We set the metric
	\begin{equation}
		d_{p, \epsilon}(x, y) := \inf \sum\limits_{i=1}^{n} \rho_{p, \epsilon}(a_{i-1}, a_i)
	\end{equation}
	where infimum is taken over all finite sequences $(a_i)_{i=0}^{n} \subseteq X\cup \partial_{G}X$ with $a_0=x$ and $a_n=y$.
\end{defn}

\begin{rmk}\label{Gromov identification}
 There is another characterization of the Gromov boundary using geodesic rays. We say that two geodesic rays $\gamma$ and $\tilde{\gamma}$ in $X$ with $\gamma(0)=\tilde{\gamma}(0)=p$ are equivalent if $\sup_{t \geq 0}d(\gamma(t), \tilde{\gamma}(t))$ is finite. Then we can consider a quotient space of a set of all geodesic rays emanating from $p \in X$ by the above equivalence relation, denoted by $\partial_{r}X$. By \cite[Lemma 3.13]{BH}, if a metric space $X$ is proper geodesic, then the map $\partial_{r}X \ni [\gamma] \mapsto [(\gamma(n))_n] \in \partial_{G}X$ is a bijective canonical identification between the Gromov boundary $\partial_{G}X$ and $\partial_{r}X$. Moreover, by \cite[Lemma 4.10, Theorem 4.13]{BHK}, we know that the map $\partial_{G}X \ni [(\gamma(n))_n] \mapsto \lim\limits_{n \to \infty} \gamma(n) \in \partial_{d_{\epsilon}}X^{\epsilon}$ is well-defined and is a canonical bilipschitz map for $0<\epsilon \leq \epsilon_{0}(\delta)$ where $\epsilon_{0}(\delta)$ is a constant defined in \cite[Chapter 5]{BHK} and $\partial_{d_{\epsilon}}X^{\epsilon}$ is the metric boundary defined by $\overline{X^{\epsilon}}\setminus X^{\epsilon}$. This fact plays a key role to prove the convergence of boundary.
\end{rmk}

\section{The notion of Gromov-Hausdorff distance with boundary}
In this section, we define the notion of Gromov-Hausdorff distance with boundary, denoted by $d_{GHB}$, and examine basic properties of $d_{GHB}$. We omit some proofs of the propositions provided below if there is no change in their proofs in comparison to the analogs of the Gromov-Hausdorff distance found in \cite{HKST} and \cite{BBI}.

\begin{defn}(Hausdorff distance with boundary)
	Let $(X, d)$ be a metric space and $\mathcal{M}(X)$ denote the set of all noncomplete precompact locally compact subsets of $X$. For $A, B \in \mathcal{M}(X)$, we set the \emph{Hausdorff distance with boundary} by
	\begin{equation}
 	d_{HB}(A, B) :=  d^{X}_{H}(A, B)+d^{X}_{H}(\partial A, \partial B)\notag 
 	  \end{equation}
 	where $\partial A:= \overline{A}\setminus A$ and $\partial B:= \overline{B}\setminus B$ are the boundaries of $A$ and $B$ respectively. One can check that this $d_{HB}$ is a metric on $\mathcal{M}(X)$.
\end{defn}

For a given metric space $X$, denote the metric boundary by $\partial X:= \overline{X} \setminus X$ where $\overline{X}$ is the completion of $X$. We now define the Gromov-Hausdorff distance with boundary.
\begin{defn}[Gromov-Hausdorff metric with boundary]
	 Let $X$ and $Y$ be noncomplete precompact metric spaces. Set
	\begin{equation}
 	d_{GHB}(X, Y) := \text{inf} \ d^{Z}_{H}(\varphi(X), \psi(Y))+d^{Z}_{H}(\varphi(\partial X), \psi(\partial Y))
  \notag
 \end{equation}
 where infimum is taken over all metric spaces $Z$ and isometric embeddings $\varphi : \overline{X} \to Z$ and $\psi : \overline{Y} \to Z$.
\end{defn}

In this section, unless otherwise stated, we always assume that metric spaces are all noncomplete and precompact. We first record the following elementary yet useful remarks.

\begin{rmk}\label{extension}
	If a map $f$ is an isometry from $X$ to $Y$, then both the canonical extension $\tilde{f} : \overline{X} \to \overline{Y}$ and the restriction $\tilde{f}|_{\partial X} : \partial X \to \partial Y$  are isometries.
\end{rmk}

\begin{rmk}\label{isometry1}
	If there exists an isometry $f : X \to Y$, then $d_{GHB}(X, Y)=0$.
\end{rmk}

\begin{defn}
	Let $X, Y$ be metric spaces. We say that $X \sqcup Y := (X\times \{0\})\cup (Y\times \{1\})$ is a \emph{disjoint union} of the sets $X$ and $Y$. The canonical maps $\iota_{X} : X \hookrightarrow X \sqcup Y$ and $\iota_{Y} : Y \hookrightarrow X \sqcup Y$ are defined by 
	\[
	\iota_{X}(x)=(x, 0) \ \ \text{and} \ \ \iota_{Y}(y)=(y, 1)
	\]
	for each $x \in X$ and $y \in Y$.
\end{defn}

\begin{defn}(Admissible metric)
	Let $X, Y$ be metric spaces. We say a metric $d$ on $X \sqcup Y$ is \emph{admissible} if the canonical maps $\iota_{X} : X \hookrightarrow X \sqcup Y$ and $\iota_{Y} : Y \hookrightarrow X \sqcup Y$ are isometric embeddings with respect to the metric $d$.
\end{defn}

We only give a sketch of the proof of the following lemma since the proof is identical with the case of the Gromov-Hausdorff distance. See \cite[Remark 7.3.12 and Proposition 7.3.16]{BBI} and \cite[Proposition 11.1.9]{HKST}.
\begin{lemma}\label{equivalence1}
	Let $X$ and $Y$ be noncomplete precompact metric spaces. Then 
	\[d_{GHB}(X, Y)=\inf\limits_{d}(d^{d}_{H}(\iota_{\overline{X}}(X), \iota_{\overline{Y}}(Y))+d^{d}_{H}(\iota_{\overline{X}}(\partial{X}), \iota_{\overline{Y}}(\partial{Y})))=: \tilde{d}_{GHB}(X, Y),
	\]
	where the infimum is taken over all admissible metrics on $\overline{X} \sqcup \overline{Y}$ and $d^{d}_{H}$ is the Hausdorff distance with respect to the metric $d$ on $\overline{X} \sqcup \overline{Y}$.
\end{lemma}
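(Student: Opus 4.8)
The plan is to prove the two inequalities $d_{GHB}(X,Y) \le \tilde{d}_{GHB}(X,Y)$ and $\tilde{d}_{GHB}(X,Y) \le d_{GHB}(X,Y)$ separately, mirroring the standard reduction for $d_{GH}$ to admissible metrics on a disjoint union. The first inequality is essentially immediate: given any admissible metric $d$ on $\overline{X} \sqcup \overline{Y}$, the inclusions $\iota_{\overline{X}}$ and $\iota_{\overline{Y}}$ are isometric embeddings of $\overline{X}$ and $\overline{Y}$ into the single metric space $Z := (\overline{X} \sqcup \overline{Y}, d)$. Restricting these to $X$, $Y$ and to $\partial X$, $\partial Y$ shows that each admissible metric furnishes an admissible $Z$ in the definition of $d_{GHB}$, so taking infima gives $d_{GHB}(X,Y) \le \tilde{d}_{GHB}(X,Y)$.

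For the reverse inequality, I would start from an arbitrary metric space $Z$ together with isometric embeddings $\varphi : \overline{X} \to Z$ and $\psi : \overline{Y} \to Z$. The idea is to pull the ambient metric of $Z$ back to $\overline{X} \sqcup \overline{Y}$: define a candidate distance on the disjoint union by declaring cross-distances to be $d(a,b) := d_Z(\varphi(a), \psi(b))$ for $a \in \overline{X}$, $b \in \overline{Y}$, while keeping the intrinsic distances on each piece. This is admissible by construction since $\iota_{\overline{X}}$ and $\iota_{\overline{Y}}$ become isometric, and the triangle inequality is inherited from $Z$ because every distance is realized as a $Z$-distance through the maps $\varphi, \psi$. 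Under this pullback metric the Hausdorff distances $d^{d}_{H}(\iota_{\overline{X}}(X), \iota_{\overline{Y}}(Y))$ and $d^{d}_{H}(\iota_{\overline{X}}(\partial X), \iota_{\overline{Y}}(\partial Y))$ equal the corresponding Hausdorff distances $d^{Z}_{H}(\varphi(X), \psi(Y))$ and $d^{Z}_{H}(\varphi(\partial X), \psi(\partial Y))$ in $Z$, so the infimum over admissible metrics is bounded above by the $Z$-quantity; taking the infimum over all $Z$ yields $\tilde{d}_{GHB}(X,Y) \le d_{GHB}(X,Y)$.

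The one technical point deserving care, and the step I expect to be the main obstacle, is that the naive pullback $d(a,b) = d_Z(\varphi(a), \psi(b))$ need not be a genuine metric when $\varphi$ and $\psi$ fail to be injective on the combined image or when points of $\overline{X}$ and $\overline{Y}$ map to the same point of $Z$, forcing a zero cross-distance between distinct abstract points. The standard fix, which I would adopt, is to first replace $Z$ by the image $\varphi(\overline{X}) \cup \psi(\overline{Y})$ and then, if needed, perturb: for any $\eta > 0$ one enlarges cross-distances to $d_Z(\varphi(a),\psi(b)) + \eta$, which restores positivity between points lying on different sides while changing each Hausdorff distance by at most $\eta$. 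Letting $\eta \to 0$ recovers the desired bound. I would note that the compatibility of the canonical extension and restriction with isometric embeddings (Remark~\ref{extension}) guarantees that the images of $\partial X$ and $\partial Y$ under $\varphi, \psi$ are exactly $\varphi(\overline{X}) \setminus \varphi(X)$ and $\psi(\overline{Y}) \setminus \psi(Y)$, so the boundary Hausdorff terms match up correctly between the two formulations. Combining the two inequalities completes the proof.
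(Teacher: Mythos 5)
Your proposal is correct and follows essentially the same route as the paper: the easy direction by viewing each admissible metric as a choice of $Z$, and the reverse direction by pulling the metric of $Z$ back to $\overline{X}\sqcup\overline{Y}$ after a small perturbation to handle overlapping images. The only (cosmetic) difference is the perturbation device: the paper forces $\varphi(\overline{X})$ and $\psi(\overline{Y})$ to be disjoint by re-embedding into $Z\times[0,1]$ at heights $0$ and $\epsilon$, while you add $\eta$ to all cross-distances, which works equally well since the triangle inequality survives adding a constant to cross-terms and each Hausdorff distance changes by at most $\eta$.
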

\begin{proof}
	Since $d_{GHB}(X, Y) \leq \tilde{d}_{GHB}(X, Y)$ is obvious, it suffices to show that $\tilde{d}_{GHB}(X, Y) \leq d_{GHB}(X, Y)$. For any $\epsilon>0$, choose a metric space $Z$ and the isometric embeddings $\varphi : \overline{X} \to Z$ and $\psi : \overline{Y} \to Z$ such that 
	\begin{equation}\label{claim1}
		d^{Z}_{H}(\varphi(X), \psi(Y))+d^{Z}_{H}(\varphi(\partial X), \psi(\partial Y))< d_{GHB}(X, Y)+ \epsilon
	\end{equation}
	By considering $Z':=Z\times [0, 1]$ with the product metric, $\varphi(\overline{X})':=\varphi(\overline{X})\times \{0\}$, and $\psi(\overline{Y})':=\psi(\overline{Y})\times \{\epsilon\}$ if needed, we may assume that $\varphi(\overline{X})$ and $\psi(\overline{Y})$ are disjoint.  Define $d$ on $\overline{X} \sqcup \overline{Y}$ by 
	\[
	d(x, y)  :=
     \begin{cases}
       d_{Z}(\varphi(x), \varphi(y)) &\quad\text{if $x, y \in \overline{X}$}\\
       \text{$d_{Z}(\varphi(x), \psi(y))$} &\quad\text{if $x \in \overline{X}$, $y \in \overline{Y}$} \\
       \text{$d_{Z}(\psi(x), \psi(y))$} &\quad\text{if $x, y \in \overline{Y}$}\\
     \end{cases}
     \]
     This defines a metric on $\overline{X} \sqcup \overline{Y}$ due to the fact that $\varphi(\overline{X})$ and $\psi(\overline{Y})$ are disjoint. This implies that 
     \[
     \tilde{d}_{GHB}(X, Y) \leq d_{GHB}(X, Y)+\epsilon.
     \]
     This completes the proof.
\end{proof}

Through Lemma~\ref{equivalence1}, triangle inequality with respect to $d_{GHB}$ is derived. Since the proof is a simple modification of the case of $d_{GH}$, we omit the proof. See the argument to prove triangle inequality in case of the Gromov-Hausdorff distance in \cite{BBI} and \cite{HKST}.
 
\begin{lemma}\label{triangle inequality}
	Let $X, Y, Z$ be noncomplete precompact metric spaces. Then
	\[
	d_{GHB}(X, Z) \leq d_{GHB}(X, Y)+ d_{GHB}(Y, Z).
	\]
\end{lemma}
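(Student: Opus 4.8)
The plan is to reduce everything to the admissible-metric reformulation of $d_{GHB}$ provided by Lemma~\ref{equivalence1} and then to imitate the classical gluing argument for $d_{GH}$. Fix $\epsilon>0$. Using Lemma~\ref{equivalence1}, I would first choose an admissible metric $d_1$ on $\overline{X}\sqcup\overline{Y}$ and an admissible metric $d_2$ on $\overline{Y}\sqcup\overline{Z}$ such that
\[
d^{d_1}_{H}(\iota_{\overline X}(X),\iota_{\overline Y}(Y))+d^{d_1}_{H}(\iota_{\overline X}(\partial X),\iota_{\overline Y}(\partial Y))<d_{GHB}(X,Y)+\epsilon,
\]
and the analogous estimate holds for $d_2$ relative to $d_{GHB}(Y,Z)$. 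The point of passing to admissible metrics is that both $d_1$ and $d_2$ now contain a common isometric copy of $\overline{Y}$, along which they can be glued.

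Next I would build an admissible metric on the triple disjoint union $\overline{X}\sqcup\overline{Y}\sqcup\overline{Z}$. The natural choice is to keep $d_1$ on $\overline{X}\sqcup\overline{Y}$, keep $d_2$ on $\overline{Y}\sqcup\overline{Z}$, and define the missing cross terms by
\[
d_3(x,z):=\inf_{y\in\overline{Y}}\bigl(d_1(x,y)+d_2(y,z)\bigr),\qquad x\in\overline{X},\ z\in\overline{Z}.
\]
This is the standard gluing construction; it produces a (pseudo)metric on $\overline{X}\sqcup\overline{Y}\sqcup\overline{Z}$ whose restriction to each of $\overline{X}$, $\overline{Y}$, $\overline{Z}$ coincides with the original metric, so that all three completions remain isometrically embedded. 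The boundaries $\partial X,\partial Y,\partial Z$ are carried along automatically: since $d_3$ does not alter distances inside any single copy, each $\partial\,\cdot$ sits in the ambient space as the same distinguished subset $\overline{\,\cdot\,}\setminus(\cdot)$ of the embedded completion. After discarding the points at mutual distance zero, or equivalently perturbing by an arbitrarily small amount, the restriction of $d_3$ to $\overline{X}\sqcup\overline{Z}$ is an admissible metric.

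Finally I would invoke the triangle inequality for the Hausdorff distance inside the single ambient space $(\overline{X}\sqcup\overline{Y}\sqcup\overline{Z},d_3)$, applied twice: once to the interiors and once to the boundaries. Using that $d_3$ agrees with $d_1$ on $\overline{X}\sqcup\overline{Y}$ and with $d_2$ on $\overline{Y}\sqcup\overline{Z}$, this gives
\[
d^{d_3}_{H}(\iota_{\overline X}(X),\iota_{\overline Z}(Z))\le d^{d_1}_{H}(\iota_{\overline X}(X),\iota_{\overline Y}(Y))+d^{d_2}_{H}(\iota_{\overline Y}(Y),\iota_{\overline Z}(Z))
\]
together with the same estimate for $\partial X,\partial Y,\partial Z$ in place of $X,Y,Z$. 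Adding the two inequalities, bounding the resulting left-hand side from below by $d_{GHB}(X,Z)$ via the admissible restriction of $d_3$ to $\overline{X}\sqcup\overline{Z}$, and inserting the choices of $d_1,d_2$, I obtain $d_{GHB}(X,Z)<d_{GHB}(X,Y)+d_{GHB}(Y,Z)+2\epsilon$; letting $\epsilon\to0$ finishes the argument.

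The step I expect to require the most care is verifying that the glued $d_3$ is a genuine admissible (pseudo)metric — in particular that it satisfies the triangle inequality across the three pieces and does not collapse distances within any single copy — which is precisely the content of the classical gluing lemma for $d_{GH}$. The presence of the boundary terms introduces no new difficulty: because $d_{GHB}$ isometrically embeds the full completions, the boundary sets are simply subsets of those copies, and the one glued metric $d_3$ controls the interior and the boundary Hausdorff distances simultaneously, so both summands obey the Hausdorff triangle inequality with the same ambient space.
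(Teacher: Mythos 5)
Your proof is correct and follows exactly the route the paper intends: the paper omits this proof, citing the classical gluing argument for $d_{GH}$ in \cite{BBI} and \cite{HKST}, and your argument is precisely that gluing construction (via Lemma~\ref{equivalence1}, gluing the two admissible metrics along the common copy of $\overline{Y}$, and applying the Hausdorff triangle inequality to interiors and boundaries separately), with the pseudometric issue on the cross terms handled by the standard perturbation.
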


Under the additional assumption that given spaces are locally compact, we get the following equivalence. 

\begin{lemma}\label{metric 2}
Let $X, Y$ be noncomplete precompact locally compact spaces. Then 
	$X$ is isometric to $Y$ if and only if $d_{GHB}(X, Y)=0$.
\end{lemma}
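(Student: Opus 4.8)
The forward direction is immediate: if $X$ and $Y$ are isometric then $d_{GHB}(X,Y)=0$ by Remark~\ref{isometry1}, so the plan is to establish the converse. The first step is to unwind the hypothesis $d_{GHB}(X,Y)=0$ using Lemma~\ref{equivalence1}: for each $n$ I would produce an admissible metric $d_n$ on $\overline{X}\sqcup\overline{Y}$ with $d^{d_n}_H(X,Y)+d^{d_n}_H(\partial X,\partial Y)<1/n$, so that each of the two Hausdorff distances is itself less than $1/n$. Since $X$ and $Y$ are precompact, $\overline{X}$ and $\overline{Y}$ are compact. Here the local compactness hypothesis enters in an essential way: a dense locally compact subspace of a metric space is open in its completion, so $X$ and $Y$ are open in $\overline{X}$ and $\overline{Y}$, and hence $\partial X$ and $\partial Y$ are closed, thus compact. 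This fact is exactly what separates this lemma from the case of general noncomplete precompact spaces.

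Next I would build a limiting isometric embedding $F:\overline{X}\to\overline{Y}$ that respects the boundary. Fix a countable set $E=\{e_i\}$ dense in $\overline{X}$, chosen so that it splits into a subset dense in $X$ and a subset dense in $\partial X$. For each $e_i$, the smallness of $d^{d_n}_H(X,Y)$ (respectively of $d^{d_n}_H(\partial X,\partial Y)$) lets me choose $y_i^n\in Y$ (respectively $y_i^n\in\partial Y$, when $e_i\in\partial X$) with $d_n(e_i,y_i^n)<1/n$. Because $d_n$ is admissible, its restriction to $\overline{Y}$ is the original metric, so the $y_i^n$ all lie in the fixed compact space $\overline{Y}$; a diagonal extraction over the countable set $E$ gives a subsequence along which $y_i^n\to y_i\in\overline{Y}$ for every $i$. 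The estimate $|d_n(y_i^n,y_j^n)-d_n(e_i,e_j)|<2/n$ shows $e_i\mapsto y_i$ is distance preserving on $E$, hence extends to an isometric embedding $F:\overline{X}\to\overline{Y}$. For $e_i\in\partial X$ the points $y_i^n$ lie in the closed set $\partial Y$, so $y_i=F(e_i)\in\partial Y$; by density of these points in $\partial X$ and closedness of $\partial Y$ this yields $F(\partial X)\subseteq\partial Y$.

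Running the same construction with the roles of $X$ and $Y$ reversed yields an isometric embedding $G:\overline{Y}\to\overline{X}$ with $G(\partial Y)\subseteq\partial X$. The compositions $G\circ F:\overline{X}\to\overline{X}$ and $F\circ G:\overline{Y}\to\overline{Y}$ are distance-preserving self-maps of compact metric spaces, hence surjective, which forces both $F$ and $G$ to be surjective and therefore bijective isometries. Restricting to boundaries, $(G\circ F)|_{\partial X}$ is a distance-preserving self-map of the compact set $\partial X$, hence surjective, so $\partial X=G(F(\partial X))\subseteq G(\partial Y)\subseteq\partial X$; all inclusions are equalities, and injectivity of $G$ upgrades $F(\partial X)\subseteq\partial Y$ to $F(\partial X)=\partial Y$. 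Since $F$ is a bijection with $\overline{X}=X\sqcup\partial X$ and $\overline{Y}=Y\sqcup\partial Y$, it follows that $F(X)=Y$, so $F|_X:X\to Y$ is the desired isometry.

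The main obstacle is precisely the control of the interior. Unlike boundary points, limits of points of $X$ need not remain in $Y$, since $Y$ is merely open and not closed in $\overline{Y}$, so one cannot establish $F(X)\subseteq Y$ directly. The resolution is to prove instead the boundary identity $F(\partial X)=\partial Y$ and then pass to complements using the bijectivity of $F$. This is also where local compactness is indispensable: without the closedness of $\partial X$ and $\partial Y$, neither the boundary limit argument of the second paragraph nor the complement argument of the third paragraph would go through.
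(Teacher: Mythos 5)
Your proof is correct, and its skeleton matches the paper's: admissible metrics from Lemma~\ref{equivalence1}, a countable dense set, a diagonal extraction producing a distance-preserving map on that set, extension to an isometric embedding $F:\overline{X}\to\overline{Y}$, and bijectivity of $F$ and $G$ via surjectivity of distance-preserving self-maps of compact spaces. Where you genuinely differ is the step that distinguishes $d_{GHB}$ from $d_{GH}$, namely showing the limit isometry matches interior with interior and boundary with boundary. The paper builds its approximating maps $I_k:\overline{X}\to\overline{Y}$ with no boundary tracking at all, and afterwards rules out $\tilde I(x)\in\partial Y$ for $x\in X$ by a quantitative contradiction argument: local compactness gives $\alpha:=\dist(x,\partial X)>0$, and the smallness of the boundary Hausdorff distances in the metrics $d_{k_l}$ yields a chain of inequalities forcing $\dist_Y(\tilde I(x_\epsilon),\partial Y)\geq \alpha/2-2/k_l$, contradicting $\dist_Y(\tilde I(x_\epsilon),\partial Y)\leq \alpha/4$; a symmetric contradiction argument then gives $\tilde I(\partial X)\subseteq\partial Y$. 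You instead wire the boundary correspondence into the construction itself, choosing approximants $y_i^n\in\partial Y$ for the dense points of $\partial X$ (which the separate bound on $d_H^{d_n}(\partial X,\partial Y)$ permits), so that $F(\partial X)\subseteq\partial Y$ follows from closedness of $\partial Y$; you then upgrade this inclusion to the equality $F(\partial X)=\partial Y$ by applying the compact-self-map surjectivity argument to $(G\circ F)|_{\partial X}$, and conclude $F(X)=Y$ by taking complements under the bijection $F$. The two uses of local compactness are equivalent (positivity of $\dist(x,\partial X)$ for every $x\in X$ is exactly closedness of $\partial X$ in $\overline{X}$), but your route trades the paper's $\epsilon$-chasing for a soft set-theoretic finish; like the paper, you quote the underlying topological fact (a dense locally compact subspace is open in its completion) without proof, which is reasonable since it is standard.
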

\begin{proof}
	We follow the case of the Gromov-Hausdorff distance, but with a modification. By Remark~\ref{isometry1}, if $X$ is isometric to $Y$, then $d_{GHB}(X, Y)=0$.  Now suppose $d_{GHB}(X, Y)=0$. By Lemma~\ref{equivalence1}, for each $k \in \mathcal{N}$, there exist metrics $d_{k}$ on $\overline{X} \sqcup \overline{Y}$ such that 
	\[
	d_{H}^{d_k}(\iota_{\overline{X}}(X), \iota_{\overline{Y}}(Y))+d_{H}^{d_k}(\iota_{\overline{X}}(\partial X), \iota_{\overline{Y}}(\partial Y))<1/k
	\]
	where $d_{H}^{d_k}$ is the Hausdorff distance on the metric space $(\overline{X} \sqcup \overline{Y}, d_{k})$. Then there exist maps(possibly not continuous) $I_k : \overline{X} \to \overline{Y}$ and $J_k : \overline{Y} \to \overline{X}$ such that for all $x \in \overline{X}$ and $y \in \overline{Y}$,
	\begin{equation}\label{close}
	d_{k}(\iota_{\overline{X}}(x), \iota_{\overline{Y}}(I_k(x)))<1/k \ \ \text{and} \ \ d_{k}(\iota_{\overline{Y}}(y), \iota_{\overline{X}}(J_k(y)))<1/k.
	\end{equation}
	Since $\overline{X}$ and $\overline{Y}$ are separable, we can take  $S_{X}:=(x_n)_n \subseteq  \overline{X}$ and  $S_{Y}:=(y_n)_n \subseteq \overline{Y}$ that are dense in $\overline{X}$ and $\overline{Y}$ respectively. For each $x \in S_{X}$, $(I_{k}(x))_k \in \overline{Y}$. The compactness of $\overline{Y}$ allows us to pick a subsequence $(I_{k_l}(x))_l$ such that $I_{k_l}(x) \to y_{x} \in \overline{Y}$ in $d_{Y}$. Doing the same argument for other elements in $S_{X}$ for the subsequence $(I_{k_l}(x))_l$ and by the diagonal argument, we can choose $(I_{k_l})_{l}$ such that $I_{k_l}(x) \to y_{x}$ in $d_{Y}$ for each $x \in S_{X}$. Define $I : S_{X} \to \overline{Y}$ by $S_{X} \ni x \mapsto y_{x} \in \overline{Y}$.  Since $d_{k_l}$ is admissible, for every $x, y \in S_{X}$ we have
	\[
	\begin{split}
		d_{Y}(I(x), I(y)) &=\lim\limits_{l \to \infty}d_{Y}(I_{k_l}(x), I_{k_l}(y))\\
		&=\lim\limits_{l \to \infty}d_{k_l}(\iota_{\overline{Y}}(I_{k_l}(x)), \iota_{\overline{Y}}(I_{k_l}(y)))\\
		&=d_X(x, y),
	\end{split}
	\]
	where the last equality holds due to \eqref{close}. Hence we have the canonical extension of $I$, denoted by $\tilde{I} : \overline{X} \to \overline{Y}$. Note that $\tilde{I}$ is a distance preserving map. By exactly the same argument for $J_{k} : \overline{Y} \to \overline{X}$, we get $\tilde{J} : \overline{Y} \to \overline{X}$. Since $\tilde{J} \circ \tilde{I}$ and $\tilde{I} \circ \tilde{J}$ are distance preserving self-maps of $\overline{X}$ and $\overline{Y}$ respectively, both $\tilde{I}$ and $\tilde{J}$ are bijective isometries. We will prove that $\tilde{I}(x) \in Y$ for every $x \in X$. To prove this, suppose that there exists $x \in X$ such that $\tilde{I}(x) \in \partial Y$.  Set $\alpha:=$dist$(x,  \partial X)>0$. Note that $\alpha$ is positive due to the local compactness of the space $X$. Let $\epsilon:=\alpha/4 >0$ be given. Then there exists $x_{\epsilon} \in S_{X}$ such that 
	\[
	d_{X}(x_{\epsilon}, x)=d_{Y}(\tilde{I}(x_{\epsilon}), \tilde{I}(x)) < \epsilon.
	\] 
	Hence dist$(x_{\epsilon}, \partial X) \geq 3\alpha/4$ and dist$(\tilde{I}(x_{\epsilon}), \partial Y) \leq \alpha/4$. For this $x_{\epsilon}$, there exists $N \in \mathbb{N}$ such that for each $k_l \geq N$, we have 
	\[
	d_{Y}(I_{k_l}(x_{\epsilon}), \tilde{I}(x_{\epsilon}))=d_{k_l}(\iota_{\overline{Y}}(I_{k_l}(x_{\epsilon})), \iota_{\overline{Y}}(\tilde{I}(x_{\epsilon}))) < \epsilon. 
	\]
	Also, by the construction of $I_{n_k}$, we know that 
	\[
	d_{k_l}(\iota_{\overline{X}}(x_{\epsilon}), \iota_{\overline{Y}}(I_{k_l}(x_{\epsilon})))<\frac{1}{k_l}.
	\]
	Combining these inequalities, for $k_l\geq N$, we have 
	\begin{align}\label{contradiction}
		 \text{dist}_{Y}(\tilde{I}(x_{\epsilon}), \partial Y) &\geq \text{dist}_{Y}(I_{k_l}(x_{\epsilon}), \partial Y)-\epsilon \notag \\
		&=\text{dist}_{d_{k_l}}(\iota_{\overline{Y}}(I_{k_l}(x_{\epsilon})), \iota_{\overline{Y}}(\partial Y))-\epsilon \notag \\
		&\geq \text{dist}_{d_{k_l}}(\iota_{\overline{Y}}(I_{k_l}(x_{\epsilon})), \iota_{\overline{X}}(\partial X))-\frac{1}{k_l}-\epsilon \notag \\
		&\geq \text{dist}_{d_{k_l}}(\iota_{\overline{X}}(x_{\epsilon}), \iota_{\overline{X}}(\partial X))-\frac{2}{k_l}-\epsilon \notag \\
		&\geq \text{dist}_{X}(x_{\epsilon}, \partial X)-\frac{2}{k_l}-\epsilon  \notag \\
		&\geq \alpha/2-\frac{2}{k_l},
	\end{align}
	where $\text{dist}_{d_{n_k}}$ represents the distance between a point and a set with respect to the metric $d_{n_k}$. Since $\text{dist}_{Y}(\tilde{I}(x_{\epsilon}), \partial Y)\leq \alpha/4$, by letting $l \to \infty$ in the inequality \eqref{contradiction}, we get a contradiction. Hence $\tilde{I}|_{X} : X \to Y$ is well-defined. By the similar argument we did above, we can also prove that $\tilde{I}(x) \in \partial Y$ for every $x \in \partial X$. This implies that $\tilde{I}|_{X}: X \to Y$ is bijective. This completes the proof. 

\end{proof}
\begin{rmk}\label{d_{GH} versus d_{GHB}}
	As we remarked in Section 1, it is immediate that for noncomplete precompact metric spaces $X$ and $Y$, we have 
	\[
	d_{GH}(\overline{X}, \overline{Y})+d_{GH}(\partial X, \partial Y) \leq d_{GHB}(X, Y).
	\]
	On the other hand, the following example shows that the the converse inequality does not hold. Set
	\[
	X:= [0, 1]\times[0, 1]\setminus \{(1/2, 0), \ (1/2, 1) \}
	\]
	and 
	\[
	Y:=[0, 1]\times[0, 1]\setminus \{(1, 0), \ (1, 1) \}.
	\]
	where the standard Euclidean metric is defined on both $X$ and $Y$. Then $d_{GH}(\overline{X}, \overline{Y})=d_{GH}(\partial X, \partial Y)=0$ but it follows that $d_{GHB}(X, Y)>0$ since if $d_{GHB}(X, Y)=0$, then by Lemma~\ref{metric 2} there exists an isometry between $X$ and $Y$. Under this isometry, the line 
	\[
	\{(1/2, t) \ :\ 0\leq t \leq 1 \} \ \subseteq \overline{X}
	\] 
	should get mapped to 
	\[
	\{(1, t) \ :\ 0\leq t \leq 1 \} \ \subseteq \overline{Y},
	\]
	 which is impossible by looking at the connected components. 
\end{rmk}

Set a class of metic spaces $\mathcal{M}$ by
\begin{equation}
 	\mathcal{M} := \left\lbrace X \;\middle|\;
  \begin{tabular}{@{}l@{}}
    $X$ is a noncomplete, precompact, and locally compact metric space. 
   \end{tabular}
 \right\rbrace / \sim. \notag
 \end{equation}
 where the equivalence relation $\sim$ between $X$ and $Y$ is defined by existence of an isometry between $X$ and $Y$. Then combining Lemma~\ref{triangle inequality} and Lemma~\ref{metric 2}, we get the following.
\begin{thm}[First assertion of Theorem~\ref{third theorem}]
	The space $\mathcal{M}$ is a metric space with the metric $d_{GHB}$.
\end{thm}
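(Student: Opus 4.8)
The plan is to verify directly the four defining properties of a metric for $d_{GHB}$ on the quotient space $\mathcal{M}$, drawing on the lemmas already established. Nonnegativity is immediate, since $d_{GHB}(X,Y)$ is an infimum of sums of Hausdorff distances, each of which is nonnegative. Symmetry is equally immediate from the symmetric form of the definition: interchanging the roles of $\varphi$ and $\psi$ (equivalently of $X$ and $Y$) leaves the defining quantity unchanged, so $d_{GHB}(X,Y)=d_{GHB}(Y,X)$. I would also record finiteness: since $X$ and $Y$ are precompact, $\overline{X}$ and $\overline{Y}$ are compact and hence of finite diameter, and gluing isometric copies of them into a disjoint union via any admissible metric (one always exists) yields a finite value, so that $d_{GHB}(X,Y)<\infty$.

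Next I would check that $d_{GHB}$ is well defined on isometry classes, i.e. that it descends to $\mathcal{M}$. If $X$ is isometric to $X'$, then $d_{GHB}(X,X')=0$ by Remark~\ref{isometry1}, and the triangle inequality of Lemma~\ref{triangle inequality} gives $d_{GHB}(X,Y)\le d_{GHB}(X,X')+d_{GHB}(X',Y)=d_{GHB}(X',Y)$, with the reverse inequality following symmetrically; the same reasoning applies in the second slot. Hence $d_{GHB}$ depends only on the isometry classes of its arguments and is therefore a genuine function on $\mathcal{M}\times\mathcal{M}$.

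The two substantive axioms are already in hand. The triangle inequality is precisely Lemma~\ref{triangle inequality}, proved through the admissible-metric reformulation of Lemma~\ref{equivalence1}. The identity of indiscernibles, namely that $d_{GHB}(X,Y)=0$ holds exactly when $X$ and $Y$ represent the same element of $\mathcal{M}$, is precisely Lemma~\ref{metric 2}: its forward direction produces, from a sequence of admissible metrics on $\overline{X}\sqcup\overline{Y}$ driving both Hausdorff-distance-with-boundary terms to zero, a bijective isometry $\overline{X}\to\overline{Y}$ that moreover carries $X$ onto $Y$ and $\partial X$ onto $\partial Y$, while the reverse direction is Remark~\ref{isometry1}. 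Assembling nonnegativity, symmetry, the triangle inequality, and the identity of indiscernibles completes the proof.

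The point worth flagging is that no genuinely new difficulty arises at this stage: all the real work has been front-loaded into Lemma~\ref{metric 2}, whose delicate step is showing that the limiting isometry respects the decomposition into interior and boundary. That argument crucially used local compactness, through the positivity of $\dist(x,\partial X)$ for interior points $x$, to rule out an interior point of $X$ being mapped into $\partial Y$. Consequently the present theorem is essentially a bookkeeping step combining Lemma~\ref{triangle inequality}, Lemma~\ref{metric 2}, and Remark~\ref{isometry1}.
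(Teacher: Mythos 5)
Your proposal is correct and follows essentially the same route as the paper, which simply combines Lemma~\ref{triangle inequality} and Lemma~\ref{metric 2} (with Remark~\ref{isometry1} implicitly handling the easy direction). The additional checks you record --- nonnegativity, symmetry, finiteness, and well-definedness on isometry classes --- are routine points the paper leaves implicit, so your write-up is just a more explicit version of the same argument.
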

\begin{rmk}
	The assumption that metric spaces are locally compact in Theorem~\ref{third theorem} is essential in order to make $d_{GHB}$ a metric. One can see this by considering 
\[
X:=[0, 1] \cap \mathbb{Q} \ \ \text{and} \ \ Y:=[0, 1] \cap (\mathbb{R} \setminus \mathbb{Q}).
\]
Note that $d_{GHB}(X, Y)=0$ but there is no way to construct a bijective map between them.
\end{rmk}
\begin{rmk}
	 Note that $\mathcal{M}$ is not a complete metric space. Consider a sequence of metric spaces 
	\[
	X_{n}:=(0, 1)\times (0, 1)\setminus \bigcup_{k=1}^{n-1} [1/2, 1)\times \{k/n\}. 
	\]
	It is easy to verify that $X_{n}$ with the Euclidean metric are noncomplete precompact locally compact metric spaces. Moreover, $(X_{n})_n$ is Cauchy in $d_{GHB}$. If $\mathcal{M}$ is complete, we can find a limit $X$ such that $\overline{X}:=[0, 1]\times [0, 1]$, and  $d_{GHB}(X_n, X) \to 0$ as $n \to \infty$. However, $\partial X_n$ converges to  
	\[
	\partial X=[0, 1]\times [0, 1]\setminus (0, 1/2) \times (0, 1),
	\]
	which is not the boundary of the metric space $X$.
\end{rmk}

Here, in order to take the boundary into account, we define the notion of  correspondence with boundary.
\begin{defn}(Correspondence)
	Let $X, Y$ be metric spaces. A \emph{correspondence between $X$ and $Y$ with boundary} is a subset $R \subseteq \overline{X}\times\overline{Y}$ such that $\pi_{X}(R \cap (X\times Y))=X$, $\pi_{Y}(R \cap (X\times Y))=Y$, $\pi_{X}(R \cap (\partial X\times \partial Y))=\partial X$, and $\pi_{Y}(R \cap (\partial X\times \partial Y))=\partial Y$ where $\pi_{X} : \overline{X}\times \overline{Y} \to \overline{X}$ and $\pi_{Y} : \overline{X}\times \overline{Y} \to \overline{Y}$ are the canonical projections.
\end{defn}
\begin{defn}(Distortion)
	Let $X, Y$ be metric spaces and $R$ be a correspondence between $X$ and $Y$ with boundary. A \emph{distortion of $R$} is defined by
	\[
	\dis(R):=\sup\limits_{(x, y), (x', y') \in R}|d_{X}(x, x')-d_{Y}(y, y')|.
	\]
\end{defn}
\begin{rmk}\label{dis to iso}
	One can check that if $\dis(R)=0$ for a given correspondence with boundary $R$, then it is not hard to show that there exists an isometry $f : \overline{X} \to \overline{Y}$ such that $f|_{X} : X \to Y$ and $f|_{\partial X} : \partial X \to \partial Y$.
\end{rmk}

\begin{prop}\label{dist-GHB}
	Let $X, Y$ be noncomplete precompact locally compact metric spaces. Then 
	\[
	\frac{1}{2}d_{GHB}(X, Y)\leq \frac{1}{2}\inf_{R}\dis (R)\leq d_{GHB}(X, Y),
	\]
	where the infimum is taken over all correspondences with boundary $R \subseteq X \times Y$.
\end{prop}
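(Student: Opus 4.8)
The plan is to prove the two inequalities separately; together they form the ``with boundary'' refinement of the classical identity $d_{GH}(X,Y)=\tfrac12\inf_R\dis(R)$, the only new point being that every estimate must be made to respect the splitting of a correspondence into its interior part (in $X\times Y$) and its boundary part (in $\partial X\times\partial Y$). Concretely, the left inequality amounts to showing $d_{GHB}(X,Y)\le\dis(R)$ for every correspondence with boundary $R$, and the right inequality amounts to $\inf_R\dis(R)\le 2\,d_{GHB}(X,Y)$. Throughout I would work on the disjoint union $\overline X\sqcup\overline Y$ and use the reformulation of $d_{GHB}$ through admissible metrics from Lemma~\ref{equivalence1}.

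For the left inequality, fix a correspondence with boundary $R$ and set $r:=\dis(R)$; since $X,Y$ are precompact we may assume $0<r<\infty$, the case $r=0$ being handled by Remark~\ref{dis to iso}. I would glue $\overline X$ and $\overline Y$ by the standard formula
\[
d(\xi,\eta):=\inf_{(x',y')\in R}\bigl(d_X(\xi,x')+\tfrac r2+d_Y(y',\eta)\bigr),\qquad \xi\in\overline X,\ \eta\in\overline Y,
\]
together with $d:=d_X$ and $d:=d_Y$ on the two factors. The verification that $d$ is an admissible metric on $\overline X\sqcup\overline Y$ is identical to the Gromov--Hausdorff case: the gap $r/2>0$ keeps the two copies at positive distance, and the triangle inequality together with the fact that $d$ restricts to $d_X$ and $d_Y$ uses precisely $\dis(R)=r$. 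The new content is the boundary estimate: for $x\in\partial X$ the condition $\pi_X(R\cap(\partial X\times\partial Y))=\partial X$ produces some $y\in\partial Y$ with $(x,y)\in R$, whence $d(x,y)\le r/2$; running this and its symmetric counterpart shows $d^d_H(\partial X,\partial Y)\le r/2$, while the conditions on $R\cap(X\times Y)$ give $d^d_H(X,Y)\le r/2$ in the same way. Adding these and invoking Lemma~\ref{equivalence1} yields $d_{GHB}(X,Y)\le r=\dis(R)$, and taking the infimum over $R$ gives $\tfrac12 d_{GHB}(X,Y)\le\tfrac12\inf_R\dis(R)$.

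For the right inequality, fix $\epsilon>0$ and, using Lemma~\ref{equivalence1}, choose an admissible metric $d$ on $\overline X\sqcup\overline Y$ with $s+t<d_{GHB}(X,Y)+\epsilon$, where $s:=d^d_H(X,Y)$ and $t:=d^d_H(\partial X,\partial Y)$. The idea is to build the correspondence in two pieces so that it is a correspondence \emph{with boundary}: for small $\delta>0$ set
\[
R:=\{(x,y)\in X\times Y: d(x,y)\le s+\delta\}\cup\{(x,y)\in\partial X\times\partial Y: d(x,y)\le t+\delta\}.
\]
Because $X\subseteq(Y)_{s+\delta}$ and $\partial X\subseteq(\partial Y)_{t+\delta}$ (and symmetrically), each of the four projection conditions defining a correspondence with boundary is met, and since the two pieces live in the disjoint sets $X\times Y$ and $\partial X\times\partial Y$ they do not interfere. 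For any two pairs $(x,y),(x',y')\in R$, admissibility of $d$ gives $|d_X(x,x')-d_Y(y,y')|=|d(x,x')-d(y,y')|\le d(x,y)+d(x',y')\le 2\max(s,t)+2\delta$, so $\dis(R)\le 2(s+t)+2\delta$. Hence $\inf_R\dis(R)\le 2(s+t)+2\delta<2\,d_{GHB}(X,Y)+2\epsilon+2\delta$, and letting $\delta,\epsilon\to 0$ gives $\tfrac12\inf_R\dis(R)\le d_{GHB}(X,Y)$.

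I expect the main obstacle to be bookkeeping rather than a deep difficulty: one must check at each stage that the boundary and interior parts of the correspondence are kept separate and never mixed, both when reading off the two Hausdorff bounds in the first inequality and when verifying that the two-piece set $R$ is genuinely a correspondence with boundary in the second. The metric-gluing step in the first inequality is routine once one observes it is literally the Gromov--Hausdorff construction, so the only place that genuinely requires care is confirming that the boundary-to-boundary clauses of the correspondence produce the $\partial X$--$\partial Y$ Hausdorff estimate.
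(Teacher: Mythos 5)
Your proposal is correct and takes essentially the same route as the paper: the right inequality by splitting a near-optimal realization into an interior correspondence (threshold $s+\delta$) and a boundary correspondence (threshold $t+\delta$) and estimating the distortion case by case, and the left inequality by gluing $\overline{X}\sqcup\overline{Y}$ with a gap equal to half the distortion and reading off the two Hausdorff bounds from the four projection conditions, exactly as in the paper's proof. The only cosmetic differences are that you route the right inequality through the admissible-metric formulation of Lemma~\ref{equivalence1} instead of a general ambient space $Z$, and use an explicit $\delta$-slack where the paper takes $s,t$ strictly larger than the respective Hausdorff distances; neither changes the substance.
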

\begin{proof}
	 We will first prove that 
	\[
	d_{GHB}(X, Y) \geq \frac{1}{2}\inf_{R}\text{dis}(R).
	\]
	For each $r > d_{GHB}(X, Y)$, there exist a metric space $Z$ and isometric embeddings $\varphi : \overline{X} \to Z$ and $\psi : \overline{Y} \to Z$ such that 
	\[
	d^{Z}_{H}(\varphi(X), \psi(Y))+ d^{Z}_{H}(\varphi(\partial X), \psi(\partial Y))< r,
	\]
	where $d^{Z}_{H}$ is the Hausdorff distance in $Z$. Set $s>d^{Z}_{H}(\varphi(X), \psi(Y))$ and $t>d^{Z}_{H}(\varphi(\partial X), \psi(\partial Y))$ such that $s+t,r$. Define a correspondence with boundary
	\begin{equation}
 	R := \left\lbrace (x, y) \in \overline{X} \times \overline{Y} \;\middle|\;
  \begin{tabular}{@{}l@{}}
    $d_{Z}(\varphi(x), \psi(y)) \leq s$, $x \in X$, and $y \in Y$\\
    or \\
    $d_{Z}(\varphi(x), \psi(y)) \leq t$, $x \in \partial X$, and $y \in \partial Y$.
   \end{tabular}
 \right\rbrace. \notag
 \end{equation}
 If $(x, y), (x', y') \in R \cap (X\times Y)$, then 
 \[
 \begin{split}
 	|d_{X}(x, x')-d_{Y}(y, y')| &\leq |d_{Z}(\varphi(x), \varphi(x'))-d_{Z}(\varphi(x), \psi(y'))| \\
 	&\ \ \ + |d_{Z}(\varphi(x), \psi(y'))- d_{Z}(\psi(y), \psi(y'))|\\
 	&\leq d_{Z}(\varphi(x'), \psi(y'))+d_{Z}(\varphi(x), \psi(y))\\
 	&\leq 2s,
 \end{split}
 \]
  Similarly, if $(x, y), (x', y') \in R \cap (\partial X\times \partial Y)$, then
  \[
 |d_{X}(x, x')-d_{Y}(y, y')|\leq 2t,
 \]
 Also, if $(x, y) \in R \cap (X\times Y)$ and $(x', y') \in R \cap (\partial X\times \partial Y)$, then 
 \[
 |d_{X}(x, x')-d_{Y}(y, y')| \leq s+t.
 \]
Therefore, we conclude that
\[
\frac{1}{2}\inf\limits_{R}\text{dis}(R)\leq s+t < r.
\]
Letting $r \to d_{GHB}(X, Y)$, the claim is proved. Next we will prove
\[
 d_{GHB}(X, Y) \leq  \inf_{R}\text{dis}(R).
\]
Let $R$ be a correspondence with boundary and $r:=\frac{1}{2}\text{dis}(R)$.
 If $r=0$, by Remark~\ref{dis to iso}, we have an isometry $f : X \to Y$, which implies $d_{GHB}(X, Y)=0$ by Lemma~\ref{equivalence1}. Hence we assume $r>0$. Define an admissible metric $d$ on $\overline{X} \sqcup \overline{Y}$ by
\[
	d(\iota_{\overline{X}}(x), \iota_{\overline{Y}}(y))  :=
     \begin{cases}
       d_{X}(x, y) &\quad\text{if $x, y \in \overline{X}$}\\
       \text{$\inf\limits_{(x', y') \in R}(d_{X}(x, x')+d_{Y}(y, y')) + r$} &\quad\text{if $x \in \overline{X}$, $y \in \overline{Y}$} \\
        \text{$\inf\limits_{(x', y') \in R}(d_{X}(y, x')+d_{Y}(x, y')) + r$} &\quad\text{if $x \in \overline{Y}$, $y \in \overline{X}$}\\
       \text{$d_{Y}(x, y)$} &\quad\text{if $x, y \in  \overline{Y}$.}\\
     \end{cases}
     \]
One can check that $d$ satisfies triangle inequality due to the choice of  $r$. We claim that 
\[
d^{\overline{X}\sqcup \overline{Y}}_{H}(\iota_{\overline{X}}(X), \iota_{\overline{Y}}(Y))\leq r.
\]
For each $y \in Y$, there exists $x \in X$ such that $(x, y) \in R$ since $\pi_{Y}(R \cap (X \times Y))=Y$. Then 
\[
d(\iota_{\overline{X}}(x), \iota_{\overline{Y}}(y)) =\inf_{(x', y') \in R}(d_{X}(x, x')+d_{Y}(y, y'))+r\leq d_{X}(x,x)+d_{Y}(y,y)+r=r.
\]
By the same argument, for every $x \in X$, we can find $y \in Y$ such that 
$d(x, y) \leq r$,
which implies that $d^{\overline{X}\sqcup \overline{Y}}_{H}(X, Y) \leq r$.
Similarly, we have 
\[
d^{\overline{X}\sqcup \overline{Y}}_{H}(\iota_{\overline{X}}(\partial X), \iota_{\overline{Y}}(\partial Y))\leq r.
\] 
Hence we get
\[
\frac{1}{2}d_{GHB}(X, Y) \leq \frac{1}{2}(d^{\overline{X}\sqcup \overline{Y}}_{H}(\iota_{\overline{X}}(X), \iota_{\overline{Y}}(Y))+d^{\overline{X}\sqcup \overline{Y}}_{H}(\iota_{\overline{X}}(\partial X), \iota_{\overline{Y}}(\partial Y)))\leq r.
\]
We conclude that
\[
\frac{1}{2}d_{GHB}(X, Y) \leq \frac{1}{2}\inf\limits_{R}\text{dis}(R).
\]
\end{proof}

We introduce a notion of $\epsilon$-isometry with boundary.
\begin{defn}($\epsilon$-isometry with boundary)
	A map $f : \overline{X} \to \overline{Y}$ is called an $\epsilon$-isometry with boundary if
		\[
	\begin{split}
		\dis(f) &:=\sup\limits_{x, x' \in \overline{X}}|d_{Y}(f(x), f(x'))-d_{X}(x, x')| \leq \epsilon,
	\end{split}
	\]
	and
	\[
	Y \subseteq (f(X))_{\epsilon}, \ \ \text{and} \ \ \partial Y \subseteq (f(\partial X))_{\epsilon}.
	\]
\end{defn}

Here is a simple generalization of the relationship between $d_{GH}$ and $\epsilon$-isometry. We omit the proof since there is no change in it in comparison to the case of the Gromov-Hasudorff distance. This proposition will be used to prove the convergence of uniformized spaces.
\begin{prop}\label{isometry with boundary}
Let $X, Y$ be noncomplete precompact locally compact metric spaces. 
	\begin{enumerate}
		\item If $d_{GHB}(X, Y)<\epsilon$, then there exists a $2\epsilon$-isometry with boundary $f : \overline{X} \to \overline{Y}$.
		\item If there exists an $\epsilon$-isometry with boundary $f : \overline{X} \to \overline{Y}$, then $d_{GHB}(X, Y) \leq 3\epsilon$.
	\end{enumerate}
\end{prop}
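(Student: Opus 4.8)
The plan is to prove both assertions through the correspondence characterization of Proposition~\ref{dist-GHB}, which yields $d_{GHB}(X,Y)\le \inf_R \dis(R)\le 2\,d_{GHB}(X,Y)$, the infimum running over correspondences with boundary. This is exactly the mechanism by which one passes between $d_{GH}$ and $\epsilon$-isometries classically, so each direction amounts to converting a small-distortion correspondence into a near-isometry and back, while keeping track of the interior/boundary partition.

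For the first assertion I would start from $d_{GHB}(X,Y)<\epsilon$, so that $\inf_R \dis(R)\le 2\,d_{GHB}(X,Y)<2\epsilon$ by Proposition~\ref{dist-GHB}, and fix a correspondence with boundary $R$ with $\dis(R)<2\epsilon$. I then build $f:\overline X\to\overline Y$ by selection: for $x\in X$ use $\pi_X(R\cap(X\times Y))=X$ to pick $y\in Y$ with $(x,y)\in R$ and set $f(x):=y$; for $x\in\partial X$ use $\pi_X(R\cap(\partial X\times\partial Y))=\partial X$ to pick $y\in\partial Y$ with $(x,y)\in R$ and set $f(x):=y$. By construction $f(X)\subseteq Y$ and $f(\partial X)\subseteq\partial Y$, so $f$ respects the partition of $\overline X$ automatically. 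Since $(x,f(x))\in R$ for every $x$, the inequality $|d_X(x,x')-d_Y(f(x),f(x'))|\le\dis(R)<2\epsilon$ gives $\dis(f)\le 2\epsilon$, and the remaining projection identities $\pi_Y(R\cap(X\times Y))=Y$ and $\pi_Y(R\cap(\partial X\times\partial Y))=\partial Y$ give $Y\subseteq (f(X))_{2\epsilon}$ and $\partial Y\subseteq(f(\partial X))_{2\epsilon}$, so $f$ is a $2\epsilon$-isometry with boundary.

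For the second assertion I would reverse the construction, setting $R:=\{(x,y)\in\overline X\times\overline Y : d_Y(f(x),y)\le\epsilon\}$ (restricted to pairs respecting the partition). The distortion bound is routine: for $(x,y),(x',y')\in R$, splitting $|d_X(x,x')-d_Y(y,y')|$ into a $\dis(f)\le\epsilon$ term and two coverage terms $d_Y(f(x),y),\,d_Y(f(x'),y')\le\epsilon$ yields $\dis(R)\le 3\epsilon$; Proposition~\ref{dist-GHB} then gives $d_{GHB}(X,Y)\le\dis(R)\le 3\epsilon$.

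The main obstacle is verifying that this $R$ is genuinely a correspondence with boundary, i.e.\ that all four projection conditions hold. Three are immediate: the two identities on $X\times Y$ follow from density of $Y$ in $\overline Y$ together with $Y\subseteq(f(X))_\epsilon$, and $\pi_Y(R\cap(\partial X\times\partial Y))=\partial Y$ follows from the hypothesis $\partial Y\subseteq(f(\partial X))_\epsilon$. The delicate one is $\pi_X(R\cap(\partial X\times\partial Y))=\partial X$, which requires every $x\in\partial X$ to admit a partner in $\partial Y$, that is, $f(\partial X)\subseteq(\partial Y)_\epsilon$. This is precisely the assertion that $f$ carries the boundary into a neighborhood of the boundary, and it is the single point where the partition is not handed to us by the coverage hypotheses alone. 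I would therefore carry out the argument for $\epsilon$-isometries that transport interior and boundary points separately, so that $f(\partial X)\subseteq(\partial Y)_\epsilon$ holds by construction — which is automatic for the maps produced in the first assertion and for those arising in the uniformization application of Section 6 — and then the restricted $R$ satisfies all four conditions with $\dis(R)\le 3\epsilon$, completing the estimate.
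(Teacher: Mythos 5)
Your proof of assertion (1) is correct and is exactly the standard correspondence argument that the paper has in mind when it omits the proof as ``unchanged from the Gromov--Hausdorff case'': Proposition~\ref{dist-GHB} furnishes a correspondence with boundary $R$ with $\dis(R)<2\epsilon$, and the selection map (partners in $Y$ for points of $X$, partners in $\partial Y$ for points of $\partial X$) is a $2\epsilon$-isometry with boundary, indeed one mapping $X$ into $Y$ and $\partial X$ into $\partial Y$.

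For assertion (2), the obstacle you isolated is genuine, and it is worse than a difficulty with the correspondence route: under the paper's literal definition of $\epsilon$-isometry with boundary, assertion (2) is false, so no argument can close the gap without the extra hypothesis you introduce. Take $X:=(0,1]\setminus\{1/2\}$ and $Y:=(0,1]$ with the Euclidean metric; both lie in $\mathcal{M}$, with $\overline{X}=\overline{Y}=[0,1]$, $\partial X=\{0,1/2\}$, $\partial Y=\{0\}$. The identity map $f:[0,1]\to[0,1]$ satisfies $\dis(f)=0$, $Y\subseteq (f(X))_{\epsilon}$ and $\partial Y\subseteq (f(\partial X))_{\epsilon}$ for every $\epsilon>0$, hence is an $\epsilon$-isometry with boundary for every $\epsilon>0$; assertion (2) would then force $d_{GHB}(X,Y)=0$, contradicting \eqref{converse not}, since $d_{GHB}(X,Y)\geq d_{GH}(\partial X,\partial Y)\geq 1/4$ (in any ambient space, the Hausdorff distance between a two-point set of diameter $1/2$ and a singleton is at least $1/4$). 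The culprit is exactly the containment you flagged: the definition controls how well $f(\partial X)$ covers $\partial Y$, but not whether $f$ sends $\partial X$ near $\partial Y$. So restricting assertion (2) to maps with $f(\partial X)\subseteq(\partial Y)_{\epsilon}$ is not a shortcut but the necessary correction --- equivalently, this fourth, symmetric condition should be added to the definition of $\epsilon$-isometry with boundary --- and with it your argument is complete: the restricted $R$ has all four projection properties, $\dis(R)\leq 3\epsilon$, and Proposition~\ref{dist-GHB} gives $d_{GHB}(X,Y)\leq 3\epsilon$. Two caveats. First, the glued-metric proof the paper alludes to hits the identical wall (one needs $\varphi(\partial X)$ to lie near $\psi(\partial Y)$ in $Z$), so ``no change from the $d_{GH}$ case'' is not accurate. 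Second, your remark that the extra containment is automatic for the maps of Section 6 is too quick: in Theorem~\ref{boundary conv} the map $\Phi_n$ sends a boundary point $a_n$ to the \emph{interior} point $\phi_n(\gamma_n(R))$, and showing that this point lies within $C\tilde{\epsilon}$ of $\partial_{d_{\epsilon}}X^{\epsilon}$ requires the rough starlikeness of the limit space (available via Theorem~\ref{first theorem}) together with a suitable enlargement of $R$ --- an argument the paper does not carry out, and which the corrected proposition makes unavoidable.
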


Here we define the Gromov-Hausdorff convergence with boundary. As we will see later in Proposition~\ref{conv-dist-equivalent}, the convergence of spaces with respect to $d_{GHB}$ is equivalent to the notion defined below.
\begin{defn}(Gromov-Hausdorff convergence with boundary)\label{GHC with boundary}
	 Let $(X_n, d_n)_n$, $(X, d) \subseteq \mathcal{M}$. We say that \emph{$(X_n, d_n)_n$ is Gromov-Hausdorff convergent to $(X, d)$ with boundary} if there exist a metric space $Z$ and isometric embeddings $\varphi_n : \overline{X}_n \to Z$ and $\varphi : \overline{X} \to Z$ such that 
	 \[
	 d_{H}^{Z}(\varphi_n(X_n), \varphi(X)) \to 0
	 \]
	 and 
	 \[
	 d_{H}^{Z}(\varphi_n(\partial X_n), \varphi(\partial X)) \to 0
	 \]
	 where $d_{H}^{Z}$ is the Hausdorff distance on $Z$. 
	 \end{defn}

The following equivalence seems to be well-known in case of the Gromov-Hausdorff distance. Here we provide the proof for the reader's convenience.
\begin{prop}\label{conv-dist-equivalent}
	Let $(X_n, d_n)_n$, $(X, d) \subseteq \mathcal{M}$. Then the following notions of convergence are equivalent : 
	\begin{enumerate}
		\item $(X_n, d_n)_n$ is Gromov-Hausdorff convergent to $(X, d)$ with boundary in Definition~\ref{GHC with boundary}
		\item $d_{GHB}(X_n, X) \to 0$ \ as \ $n \to \infty$.
	\end{enumerate}
\end{prop}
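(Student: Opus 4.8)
The plan is to prove the two implications separately. The implication from (1) to (2) is immediate: if $Z$, $\varphi_n$, and $\varphi$ are as in Definition~\ref{GHC with boundary}, then these same embeddings are admissible competitors in the infimum defining $d_{GHB}$, so
\[
d_{GHB}(X_n, X) \leq d_H^Z(\varphi_n(X_n), \varphi(X)) + d_H^Z(\varphi_n(\partial X_n), \varphi(\partial X)),
\]
and the right-hand side tends to $0$ by hypothesis.

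The substantive direction is from (2) to (1), where the difficulty is that the metric space realizing $d_{GHB}(X_n, X)$ (up to a small error) a priori depends on $n$, whereas Definition~\ref{GHC with boundary} demands a single ambient space $Z$ common to all $n$. First I would invoke Lemma~\ref{equivalence1} to replace, for each $n$, the abstract realizing space by an admissible metric $\rho_n$ on the disjoint union $\overline{X}_n \sqcup \overline{X}$ satisfying
\[
d_H^{\rho_n}(X_n, X) + d_H^{\rho_n}(\partial X_n, \partial X) < \eps_n,
\]
with $\eps_n := d_{GHB}(X_n, X) + 1/n \to 0$; using the shift device from the proof of Lemma~\ref{equivalence1}, I may moreover take each $\rho_n$ to be a genuine metric, that is, strictly positive between the two components.

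Next I would glue all of these together. On the set $Z := \overline{X} \sqcup \bigsqcup_n \overline{X}_n$ I define $D$ so that it restricts to $d$ on $\overline{X}$, to $d_n$ on each $\overline{X}_n$, and to $\rho_n$ between $\overline{X}_n$ and $\overline{X}$, while for $x \in \overline{X}_n$ and $y \in \overline{X}_m$ with $n \neq m$ I set the shortcut distance
\[
D(x, y) := \inf_{c \in \overline{X}} \big(\rho_n(x, c) + \rho_m(c, y)\big).
\]
A short computation using the triangle inequality for each $\rho_n$ shows that the $\overline{X}_n$--$\overline{X}$ distances are not shortened by routing through some $\overline{X}_m$, so that $D$ restricted to $\overline{X}_n \cup \overline{X}$ still equals $\rho_n$; hence the canonical maps $\overline{X} \hookrightarrow (Z, D)$ and $\overline{X}_n \hookrightarrow (Z, D)$ are isometric embeddings. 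Consequently $d_H^Z(X_n, X) = d_H^{\rho_n}(X_n, X) < \eps_n \to 0$, and likewise for the boundaries, which is precisely condition (1).

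The main obstacle is verifying that $D$ is a bona fide metric. The triangle inequality for three points lying in three different components is routine but must be checked carefully by inserting an optimizing point of $\overline{X}$; the genuinely delicate point is strict positivity of $D(x, y)$ when $x \in \overline{X}_n$ and $y \in \overline{X}_m$ with $n \neq m$, since the defining infimum could in principle vanish. I would resolve this by allowing $D$ to be a pseudometric and then passing to its metric quotient, identifying points at $D$-distance zero. Because each $\rho_n$ is strictly positive across its two components, no point of any $\overline{X}_n$ is identified with a point of $\overline{X}$, and only points lying in distinct $\overline{X}_m$'s---which play no role in the Hausdorff distances between $X_n$ and $X$---can be collapsed; thus the quotient preserves every embedding and all the relevant Hausdorff distances, completing the argument.
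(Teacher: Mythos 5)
Your proposal is correct and takes essentially the same approach as the paper: the forward direction is the same one-line observation, and for the converse both you and the paper glue the admissible metrics $\rho_n$ on $\overline{X}_n \sqcup \overline{X}$ supplied by Lemma~\ref{equivalence1} into a single space $Z = \overline{X} \sqcup \overline{X}_1 \sqcup \overline{X}_2 \sqcup \cdots$ with cross-distances $\inf_{w \in \overline{X}}\left(\rho_n(x,w)+\rho_m(w,y)\right)$. The only divergence is in handling the possible degeneracy: where you pass to a pseudometric quotient, the paper rules out vanishing cross-component distances directly, using that $\overline{X}$ is compact (spaces in $\mathcal{M}$ are precompact) to extract from a minimizing sequence $w_k \in \overline{X}$ a limit $w$ with $\rho_n(x,w)=0$, contradicting that $\rho_n$ is a metric on the disjoint union --- so no quotient is needed, though your variant is equally valid.
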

\begin{proof}
	 Although the proof is the same one as the case of $d_{GH}$, we would like to record the proof for the sake of completeness. It is enough to prove that $d_{GHB}(X_n, X) \to 0$ as $n \to \infty$ implies $(X_n, d_n)_n$ is Gromov-Hausdorff convergent to $(X, d)$ with boundary in a sense of Definition~\ref{GHC with boundary}. We would like to remark that in this proof, we identify $\overline{X}_n$ and $ \overline{X}$ with the copies in $\overline{X}_n \sqcup \overline{X}$. Also, if $x \in \overline{X}$, then the corresponding element $\iota_{\overline{X}}(x)$ in $\overline{X}_n \sqcup \overline{X}$ by the canonical map $\iota_{\overline{X}} : \overline{X} \hookrightarrow \overline{X}_n \sqcup \overline{X}$ is still denoted by $x$. Same principle is applied to elements in $\overline{X}_n$.

	Let $\epsilon_n := d_{GHB}(X_n, X)+1/n$. Then By Lemma~\ref{equivalence1}, there exist admissible metrics $\delta_n$ on $\overline{X}_{n} \sqcup \overline{X}$ such that 
	\[
	d^{\delta_n}_{H}(X_n, X)+d^{\delta_n}_{H}(\partial X_n, \partial X)\leq \epsilon_n
	\]
	where $d^{\delta_n}_{H}$ is the Hausdorff distance on $(X_n \sqcup X, \delta_n)$. 
	Set $Z:=\overline{X} \sqcup \overline{X}_1 \sqcup \cdots \sqcup \overline{X}_n \sqcup \cdots := \overline{X}\times \{0\}\cup (\bigcup_{k=1}^{\infty} \overline{X}_k \times \{k\})$. We will define a metric $d_{Z}$ on $Z$ by
	 \[
	d_{Z}(x, y)  :=
     \begin{cases}
       \delta_{n}(x, y) &\quad\text{if $x, y \in \overline{X} \cup \overline{X}_n$}\\
       \text{$\inf_{w \in \overline{X}}(\delta_{n}(x, w)+\delta_{m}(w, y))$} &\quad\text{if $x \in \overline{X}_n$, $y \in \overline{X}_m$ with $n \neq m$}. \\
     \end{cases}
     \]
     Then we claim that this is a metric. Suppose $\delta(x, y)=0$. We only consider the case where $x \in \overline{X}_n$ and $y \in \overline{X}_{m}$. Take $(w_k)_k \subseteq \overline{X}$ such that 
     \[
     \delta_{n}(x, w_k)+\delta_{m}(w_k, y) \to 0,
     \]
      as $k \to \infty$. This implies that $x \in \overline{X}$ and $y \in \overline{X}$, which is not possible. Since the symmetry of this metric is obvious, it suffices to show that triangle inequality holds. We check cases $x \in \overline{X}_n$, $y \in \overline{X}_m$, $z \in \overline{X}$ with $n \neq m$ and $x \in \overline{X}_n$, $y \in \overline{X}_m$, $z \in \overline{X}_l$ with $n, m, l$ not all distinct since all other cases are straightforward. \\
      (\textbf{Case 1}) $x \in \overline{X}_n$, $y \in \overline{X}_m$, and $z \in \overline{X}$. Since $\overline{X}$ is compact, we can pick a point $w \in \overline{X}$ such that 
      \[
      d_{Z}(x, y)=\delta_{n}(x, w)+\delta_{m}(w, y).
      \]
      Then 
      \[
      \begin{split}
      	d_{Z}(x, y)+d_{Z}(y, z) &= \delta_{n}(x, w)+\delta_{m}(w, y)+\delta_{m}(y, z)\\
      	&\geq \delta_{n}(x, w)+\delta_{m}(w, z)=d_{Z}(x, z)
      \end{split}
      \] 
      where we used the fact that $\delta_m(w, z)=\delta_n(w, z)$ for every $w, z \in \overline{X}$.\\
      (\textbf{Case 2}) $x \in \overline{X}_n$, $y \in \overline{X}_m$, and $z \in \overline{X}_l$. Take $w \in \overline{X}$ and $\bar{w} \in \overline{X}$ such that
     \[
     d_{Z}(x, y)=\delta_n(x, w)+\delta_m(w, y)
     \]
     and
     \[
     d_{Z}(y, z)=\delta_m(y, \bar{w})+\delta_l(\bar{w}, z).
     \]
     Then,
     \[
     \begin{split}
     	d_{Z}(x, z) &\leq \delta_{n}(x, \bar{w})+\delta_l(\bar{w}, z)\\
     	&\leq \delta_{n}(x, w)+\delta_{n}(w, \bar{w})+\delta_l(\bar{w}, z)\\
     	&= \delta_{n}(x, w)+\delta_{m}(w, \bar{w})+\delta_l(\bar{w}, z)\\
     	&\leq \delta_{n}(x, w)+\delta_{m}(w, y)+\delta_m(y, \bar{w})+\delta_l(\bar{w}, z)\\
     	&=d_{Z}(x, y)+d_{Z}(y, z).
     	\end{split}
     \]
     Hence $d_{Z}$ is a metric on $Z$. By taking a metric space $(Z, d_{Z})$, $\varphi_n:=\iota_{\overline{X}_n} :  \overline{X}_n \hookrightarrow Z$, and $\varphi:=\iota_{\overline{X}} :  \overline{X} \hookrightarrow Z$, we have 
     \[
     d^{Z}_{H}(\varphi_n(X_n), \varphi(X))+d^{Z}_{H}(\varphi_n(\partial X_n), \varphi(\partial X))\leq \epsilon_n \to 0
     \]
     as $n \to \infty$. This completes the proof.
\end{proof}

\section{An application to a class of bounded $A$-uniform spaces}
In this section we will present an application of the Gromov-Hausdorff distance with boundary $d_{GHB}$ to a class of bounded $A$-uniform spaces and prove Theorem \ref{third theorem}. Here we will first prove the stability of $A$-uniform spaces with respect to $d_{GHB}$. Recall from Definition~\ref{uniform space} that a noncomplete locally compact metric space $(\Omega, d)$ is called an $A$-uniform space if for each pair of points $x, y \in \Omega$ there exists a curve $\gamma$ from $x$ to $y$ such that
\begin{enumerate}
		\item $l_{d}(\gamma) \leq A d(x, y)$,
		\item $l_{d}(\gamma|_{[0, t]})\wedge l_{d}(\gamma|_{[t, 1]}) \leq A$ dist$(\gamma(t), \partial \Omega)$ \ \ \ for every $t \in [0, 1]$.
	\end{enumerate}

Before stating the next lemma, we would like to remark that by \cite[Proposition 2.20]{BHK}, every bounded $A$-uniform space is precompact. Hence the isometry class of a bounded $A$-uniform space is in $\mathcal{M}$.
\begin{lemma}\label{uniform space stable}
	Let $(\Omega_n, d_n)_n \subseteq \mathcal{M}$ be a sequence of bounded $A$-uniform spaces and $(\Omega, d) \in \mathcal{M}$. Suppose $(\Omega_n, d_n)_n$ Gromov-Hausdorff converges to $(\Omega, d)$ with boundary. Then $(\Omega, d)$ is also an $A$-uniform space.
\end{lemma}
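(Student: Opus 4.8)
The plan is to produce, for each pair of points $x, y \in \Omega$, an $A$-uniform curve joining them as a limit of $A$-uniform curves in the spaces $\Omega_n$. Rectifiable connectedness then follows immediately, and the other requirements of Definition~\ref{uniform space} (noncomplete and locally compact) are already built into the hypothesis $(\Omega, d) \in \mathcal{M}$. First I would use Proposition~\ref{conv-dist-equivalent} to realize the convergence concretely: there is a single metric space $Z$ with isometric embeddings $\varphi_n : \overline{\Omega}_n \to Z$ and $\varphi : \overline{\Omega} \to Z$ such that $d^Z_H(\varphi_n(\Omega_n), \varphi(\Omega)) \to 0$ and $d^Z_H(\varphi_n(\partial\Omega_n), \varphi(\partial\Omega)) \to 0$. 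Identifying each space with its image in $Z$, I work inside $Z$ throughout. Since $\overline{\Omega}$ is compact and $\overline{\Omega}_n \to \overline{\Omega}$ in the Hausdorff distance of $Z$, the closure $K$ in $Z$ of $\bigcup_n \overline{\Omega}_n \cup \overline{\Omega}$ is compact; this is the ambient compact set for the Arzel\`a--Ascoli argument to come.

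Now fix $x, y \in \Omega$ (the case $x = y$ being trivial). Using the Hausdorff convergence I choose $x_n, y_n \in \Omega_n$ with $x_n \to x$ and $y_n \to y$ in $Z$, so that $d_n(x_n, y_n) \to d(x, y)$. Let $\gamma_n : [0,1] \to \Omega_n$ be $A$-uniform curves joining $x_n$ to $y_n$, reparametrized to constant speed; then $l_{d_n}(\gamma_n) \leq A\, d_n(x_n, y_n)$ is uniformly bounded, so the $\gamma_n$, viewed in $Z$, are uniformly Lipschitz with images in $K$. By Arzel\`a--Ascoli, after passing to a subsequence, $\gamma_n \to \gamma$ uniformly, where $\gamma : [0,1] \to \overline{\Omega}$ is Lipschitz with $\gamma(0) = x$ and $\gamma(1) = y$.

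It remains to verify the two conditions of Definition~\ref{uniform curve} for $\gamma$. For the first, lower semicontinuity of length under uniform convergence gives $l_d(\gamma) \leq \liminf_n l_{d_n}(\gamma_n) \leq \liminf_n A\, d_n(x_n, y_n) = A\, d(x, y)$. The second condition is the heart of the matter. Lower semicontinuity applied to restrictions yields $l_d(\gamma|_{[0,t]}) \leq \liminf_n l_{d_n}(\gamma_n|_{[0,t]})$ and similarly on $[t,1]$, and a short elementary lemma ($\min(a,b) \leq \liminf_n \min(a_n, b_n)$ whenever $a \leq \liminf_n a_n$ and $b \leq \liminf_n b_n$) upgrades this to $l_d(\gamma|_{[0,t]}) \wedge l_d(\gamma|_{[t,1]}) \leq \liminf_n \bigl( l_{d_n}(\gamma_n|_{[0,t]}) \wedge l_{d_n}(\gamma_n|_{[t,1]}) \bigr)$. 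For the right-hand side of the uniform inequality, the crucial point is that $\dist_{d_n}(\gamma_n(t), \partial\Omega_n) \to \dist_d(\gamma(t), \partial\Omega)$: since each $\varphi_n$ is isometric on $\overline{\Omega}_n$, these equal the corresponding distances in $Z$, and they converge because $\gamma_n(t) \to \gamma(t)$ while $\partial\Omega_n \to \partial\Omega$ in the Hausdorff distance of $Z$. Combining, $l_d(\gamma|_{[0,t]}) \wedge l_d(\gamma|_{[t,1]}) \leq \liminf_n A\, \dist_{d_n}(\gamma_n(t), \partial\Omega_n) = A\, \dist_d(\gamma(t), \partial\Omega)$, which is exactly the desired inequality. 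Finally this forces $\gamma(t) \in \Omega$ for every interior $t$: if $\gamma(t) \in \partial\Omega$ the right-hand side vanishes, so $\gamma$ is constant on $[0,t]$ or on $[t,1]$, giving $\gamma(t) \in \{x, y\} \subseteq \Omega$, a contradiction. Hence $\gamma$ is a genuine $A$-uniform curve in $\Omega$, and $(\Omega, d)$ is an $A$-uniform space.

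The main obstacle I anticipate is the passage to the limit in the second condition, specifically the convergence $\dist_{d_n}(\gamma_n(t), \partial\Omega_n) \to \dist_d(\gamma(t), \partial\Omega)$: this fails under ordinary Gromov--Hausdorff convergence and is precisely what the second Hausdorff term in $d_{GHB}$ supplies, so the argument must be written to make explicit where the boundary convergence is invoked. A secondary technical point is confining the Arzel\`a--Ascoli argument to the fixed compact subset $K$ of $Z$ and checking the interaction of lower semicontinuity with the minimum $\wedge$, both of which are handled by the observations above.
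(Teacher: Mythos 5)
Your proposal is correct and follows essentially the same route as the paper's proof: realize the convergence in a common ambient space via Proposition~\ref{conv-dist-equivalent}, pass constant-speed $A$-uniform curves $\gamma_n$ in $\Omega_n$ to a uniform limit curve (the paper performs this Arzel\`a--Ascoli extraction by hand, via a diagonal argument over rational parameters with limit points taken in the compact set $\overline{\Omega}$), and then combine lower semicontinuity of length under uniform convergence with the Hausdorff convergence of the boundaries $\partial\Omega_n \to \partial\Omega$ in $Z$ to verify both conditions of Definition~\ref{uniform curve} and to conclude that $\gamma$ actually lies in $\Omega$. The only point to patch is your claim that the closure $K$ of $\bigcup_n \overline{\Omega}_n \cup \overline{\Omega}$ is compact, which requires $Z$ to be complete; either replace $Z$ by its completion (harmless, since isometric embeddings and Hausdorff distances persist), or, as the paper implicitly does, note that each $\gamma_n(t)$ is within $d^Z_H(\Omega_n,\Omega)\to 0$ of the compact set $\overline{\Omega}$, so subsequential limits can be extracted directly in $\overline{\Omega}$ without any ambient compact set.
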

\begin{proof}
	 First note that in this proof we do not take the arc-length parametrization for $A$-uniform curves $\gamma'$. We always take the parametrization set by the following procedure. First consider the arc-length parametrization $\gamma' : [0, l_{d}(\gamma')] \to \Omega$ and define $\alpha(t):=l_{d}(\gamma') t$ for every $t \in [0, 1]$. Then we set $\gamma := \gamma' \circ \alpha : [0, 1] \to \Omega$, which is the curve considered in this proof. By Proposition~\ref{conv-dist-equivalent}, we may assume that $(\Omega_n)_n$ and $\Omega$ are all subsets of the same space $(Z, d)$. Recall again that $\Omega_n$ and $\Omega$ need not be open in $(Z, d)$. Set $\epsilon_n > d^{Z}_{H}(\Omega_n, \Omega)$. Let $x, y \in \Omega$  be given. Take $x_n, y_n \in \Omega_n$ such that 
	\[
	d(x, x_n)\leq \epsilon_n \ \ \ \text{and} \ \ \ d(y, y_n)\leq \epsilon_n.
	\]
	Take $A$-uniform curves $\gamma_n : [0, 1] \to \Omega_n$ from $x_n$ to $y_n$ parametrized as described above. We record the following inequality for the later use. For each $t, t' \in [0, 1]$, we get
	\begin{align}\label{au}
		d(\gamma_n(t), \gamma_n(t')) &\leq l_{d_n}(\gamma_n|{[t, t']})\notag  \\
		&= l_{d_n}(\gamma_n) |t-t'|\notag \\
		&\leq Ad(x_n, y_n)|t-t'|\notag \\
		&\leq A(d(x, y)+2\epsilon_n)|t-t'|.
	\end{align}
	Let $t_1 \in [0, 1]\cap \mathbb{Q}$ and $n \in \mathbb{N}$ be given. Then there exists $y_n(t_1) \in \Omega$ such that $d(y_n(t_1), \gamma_n(t_1))\leq \epsilon_n$. Since we know that $\overline{\Omega}$ is compact, we can take a subsequence $(y_{n_k}(t))_k$ convergent to some $y_t \in \overline{\Omega}$. By a standard diagnalization argument we may assume that for every $t \in [0, 1] \cap \mathbb{Q}$, there exists $y_t \in \overline{\Omega}$ such that $d(y_t, \gamma_{n_k}(t)) \to 0$ as $k \to \infty$. Note that, by triangle inequality and \eqref{au},
	\begin{align}
		d(y_t, y_{t'}) &\leq d(y_t, y_{n_k}(t))+d(y_{n_k}(t), \gamma_{n_k}(t))+d(\gamma_{n_k}(t), \gamma_{n_k}(t'))\notag \\
		&\ \ \ +d(\gamma_{n_k}(t'), y_{n_k}(t'))+ d(y_{n_k}(t'), y_{t'}) \notag \\
		&\leq 2 \epsilon_{n_k}+d(y_t, y_{n_k}(t))+d(\gamma_{n_k}(t), \gamma_{n_k}(t'))+d(y_{n_k}(t'), y_{t'}) \notag \\
		&\leq 2 \epsilon_{n_k}+d(y_t, y_{n_k}(t))+A(d(x, y)+2\epsilon_n)|t-t'|+d(y_{n_k}(t'), y_{t'}) \notag
	\end{align}
	for every $t, t' \in [0, 1]\cap \mathbb{Q}$. By taking the limit as $k \to \infty$ in the above inequality, we get 
	\[
	d(y_t, y_{t'}) \leq Ad(x, y)|t-t'|,
	\]
	which implies that the map $t \in [0, 1]\cap \mathbb{Q} \mapsto y_t \in \overline{\Omega}$ is $Ad(x, y)$-lipschitz. Hence, by canonical extension to the interval $[0, 1]$, we have an $Ad(x, y)$-lipschitz map $\gamma : [0, 1] \to \overline{\Omega}$ such that $\gamma(t)=y_t$ for every $t \in [0, 1]\cap \mathbb{Q}$. We also note that $\gamma_{n_k}(t) \to \gamma(t)$ as $k \to \infty$ for every $t \in [0, 1]$ since 
	\begin{align}
		\limsup\limits_{k \to \infty}d(\gamma_{n_k}(t), \gamma(t)) &\leq \limsup\limits_{k \to \infty}d(\gamma_{n_k}(t), \gamma_{n_k}(t_j))+\limsup\limits_{k \to \infty}d(\gamma_{n_k}(t_j), \gamma(t_j))+d(\gamma(t_j), \gamma(t))\notag \\
		&\leq 2Ad(x,y)|t-t_j|\notag 
	\end{align} 
	  for $t_j \in [0, 1] \cap \mathbb{Q}$. Letting $t_j \to t$ shows that $\gamma_{n_k}(t) \to \gamma(t)$. We claim that the curve $\gamma$ is $A$-uniform. Since $\gamma$ is $Ad(x, y)$-lipschitz, we know that $l_d(\gamma) \leq Ad(x, y)$. Next we will prove that for each $t \in [0, 1]$,
	\[
 	 l_{d}(\gamma|_{[0, t]})\wedge l_{d}(\gamma|_{[t, 1]}) \leq A \ \text{dist} (\gamma(t), \partial \Omega).
	\]
	Since $\gamma_{n_k}$ are all $A$-uniform curves, we know that 
	\begin{align}\label{au1}
		l_{d}(\gamma_{n_k})(t\wedge (1-t))=l_{d_{n_k}}(\gamma_{n_k}|_{[0, t]})\wedge l_{d_{n_k}}(\gamma_{n_k}|_{[t, 1]}) &\leq A \ \text{dist} (\gamma_{n_k}(t), \partial \Omega_{n_k}) \notag \\
		&\leq A \ \text{dist} (\gamma_{n_k}(t), \partial \Omega)+A\tilde{\epsilon}_{n_k} 
	\end{align}
	where $\tilde{\epsilon}_{n_k}:= d^{Z}_{H}(\partial \Omega_{n_k}, \partial \Omega)$.  Since length does not increase under uniform limits of curves, we get 
	\begin{equation}\label{au2}
		\limsup\limits_{k \to \infty}l_{d_{n_k}}(\gamma_{n_k}|_{[0, t]})\wedge l_{d_{n_k}}(\gamma_{n_k}|_{[t, 1]}) \leq A \ \text{dist} (\gamma(t), \partial \Omega).
	\end{equation}
	Since the curve $\gamma$ is a uniform limit of $(\gamma_{n_k})_k$, we can get
	\begin{equation}\label{au3}
	l_{d}(\gamma|_{[0, t]})\wedge l_{d}(\gamma|_{[t, 1]}) \leq \limsup\limits_{k \to \infty}l_{d_{n_k}}(\gamma_{n_k}|_{[0, t]})\wedge l_{d_{n_k}}(\gamma_{n_k}|_{[t, 1]})
	\end{equation}
	for each $t \in [0, 1]$. By inequalities \eqref{au2} and \eqref{au3}, we conclude that
	\[
	l_{d}(\gamma|_{[0, t]})\wedge l_{d}(\gamma|_{[t, 1]}) \leq  A \ \text{dist} (\gamma(t), \partial \Omega).
	\]
	This tells us that even though initially the curve $\gamma$ was defined in $\overline{\Omega}$, indeed $\gamma$ is in $\Omega$ from the last inequality. This completes the proof.
\end{proof}

\begin{defn}
	Let $(X, d)$ be a metric space and set
	\begin{equation}
 	\mathcal{U}_{X}(A, R) := \left\lbrace \Omega \subseteq X \;\middle|\;
  \begin{tabular}{@{}l@{}}
    $\Omega$ is a bounded $A$-uniform space and $\diam (\Omega) \geq R$
   \end{tabular}
 \right\rbrace. \notag
 \end{equation}
\end{defn}
\begin{rmk}
	Since our focus is uniform spaces, we do not require $\Omega \subseteq X$ to be open in $X$.
\end{rmk}

\begin{prop}\label{HB-basics}
Let $(X, d)$ be a compact metric space and $A \geq 1$ and $R>0$ be fixed. Then $(\mathcal{U}_{X}(A, R), d_{HB})$ is a compact metric space.
\end{prop}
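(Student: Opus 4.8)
The plan is to deduce compactness from sequential compactness, since $d_{HB}$ is already a metric on $\mathcal{M}(X)\supseteq\mathcal{U}_X(A,R)$ (each bounded $A$-uniform space is precompact by \cite[Proposition 2.20]{BHK}, and locally compact and noncomplete by Definition~\ref{uniform space}, so indeed $\mathcal{U}_X(A,R)\subseteq\mathcal{M}(X)$). The guiding observation is that the assignment $\Omega\mapsto(\overline{\Omega},\partial\Omega)$ isometrically identifies $(\mathcal{M}(X),d_{HB})$ with a subset of $\mathcal{K}(X)\times\mathcal{K}(X)$ equipped with the sum of the two Hausdorff distances, where $\mathcal{K}(X)$ is the space of nonempty compact subsets of $X$; recall that for $\Omega\in\mathcal{M}(X)$ the set $\Omega$ is open in $\overline{\Omega}$, so $\partial\Omega$ is already compact and $\Omega=\overline{\Omega}\setminus\partial\Omega$. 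Since $\mathcal{K}(X)$ is compact by the Blaschke selection theorem, it suffices to show that every sequence in $\mathcal{U}_X(A,R)$ has a subsequence converging in $d_{HB}$ to a member of $\mathcal{U}_X(A,R)$.

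So let $(\Omega_n)_n\subseteq\mathcal{U}_X(A,R)$, put $K_n:=\overline{\Omega_n}$ and $F_n:=\partial\Omega_n$, both nonempty compact. Passing to a subsequence, I would arrange $K_n\to K$ and $F_n\to F$ in the Hausdorff distance, with $K,F$ nonempty compact and $F\subseteq K$ (this inclusion is preserved under Hausdorff limits). The natural candidate for the limit is $\Omega:=K\setminus F$. It is locally closed (the intersection of the closed set $K$ with the open set $X\setminus F$), hence locally compact, and precompact since $\overline{\Omega}\subseteq K$. Granting the two identities $\overline{\Omega}=K$ and $\partial\Omega=F$, we obtain $d^X_H(\Omega_n,\Omega)=d_H(K_n,K)\to 0$ and $d^X_H(\partial\Omega_n,\partial\Omega)=d_H(F_n,F)\to 0$, that is $d_{HB}(\Omega_n,\Omega)\to 0$; moreover $\Omega$ is then noncomplete (as $\partial\Omega=F\neq\emptyset$), so $\Omega\in\mathcal{M}$ and $(\Omega_n)_n$ converges to $\Omega$ with boundary in the sense of Definition~\ref{GHC with boundary} (take $Z=X$ with inclusions). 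Lemma~\ref{uniform space stable} then gives that $\Omega$ is $A$-uniform, and $\diam\Omega=\diam K=\lim\diam K_n=\lim\diam\Omega_n\geq R$ by continuity of the diameter under Hausdorff convergence, so $\Omega\in\mathcal{U}_X(A,R)$.

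The main obstacle, and the only place where the uniform structure is essential, is the identity $\overline{\Omega}=K$, i.e. the density of $K\setminus F$ in $K$; the equality $\partial\Omega=F$ follows formally once density is known. To prove density I would use that $A$-uniform curves produce points lying deep inside $\Omega_n$. Choosing $a_n,b_n\in\Omega_n$ with $d(a_n,b_n)\geq R/2$ and an $A$-uniform curve $\gamma_n$ joining them, the balance parameter $t^*$ with $l_d(\gamma_n|_{[0,t^*]})=l_d(\gamma_n|_{[t^*,1]})=\tfrac12 l_d(\gamma_n)$ yields, via condition (2) of Definition~\ref{uniform curve}, a point $m_n:=\gamma_n(t^*)$ with $\dist(m_n,F_n)\geq\tfrac{1}{2A}l_d(\gamma_n)\geq\tfrac{R}{4A}=:\rho>0$. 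Given $p\in K$, I pick $p_n\in\Omega_n$ with $p_n\to p$ and join $p_n$ to $m_n$ by an $A$-uniform curve $\sigma_n$; tracking the arclength bound $\dist(\sigma_n(u),F_n)\geq u/A$ for $u\leq\tfrac12 l_d(\sigma_n)$ together with $d(\sigma_n(u),p_n)\leq u$, for any $\delta>0$ I can extract a point $q_n\in\Omega_n$ with $d(q_n,p_n)\leq 2\delta$ and $\dist(q_n,F_n)\geq\eta:=\min(\delta/A,\rho)>0$. A further subsequence gives $q_n\to q\in K$ with $d(q,p)\leq 2\delta$, and since $F_n\to F$ we get $\dist(q,F)\geq\eta$, hence $q\in K\setminus F$. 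As $\delta>0$ is arbitrary, $p\in\overline{K\setminus F}$, which proves $K\subseteq\overline{\Omega}$ and hence $\overline{\Omega}=K$. The fixed constant $A$ is precisely what prevents the sets from collapsing onto their boundaries in the limit (thin tubes, for instance, fail to be $A$-uniform for fixed $A$), which is why density survives the passage to the limit.
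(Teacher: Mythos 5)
Your proposal is correct and follows essentially the same route as the paper's proof: pass to Hausdorff-convergent subsequences of $(\overline{\Omega}_n)_n$ and $(\partial \Omega_n)_n$ (the paper gets this from completeness of the hyperspace plus a Cauchy-subsequence extraction, you from the Blaschke selection theorem), take $\Omega := K\setminus F$ as the candidate limit, prove density of $\Omega$ in $K$ via the fact that $A$-uniform curves pass through points quantitatively far from the boundary, and invoke Lemma~\ref{uniform space stable} for the $A$-uniformity of the limit. The only notable difference is organizational: your density argument is direct (for each $p \in K$ you produce interior points $q_n$ with $\dist(q_n, F_n)\geq \eta$ by sliding along a uniform curve toward the deep point $m_n$, which also subsumes the paper's separate nonemptiness step), whereas the paper argues by contradiction from a hypothetical ball $B_d(x,r)\cap S \subseteq T$; both rest on the identical mechanism, and your write-up additionally makes explicit the verifications ($\partial\Omega = F$, local compactness, the diameter lower bound) that the paper leaves implicit.
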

\begin{proof}
	First we prove that $\mathcal{U}_{X}(A, R)$ is complete with respect to $d_{HB}$. Let $(\Omega_n)_n$ be a Cauchy sequence in $(\mathcal{U}_{X}(A, R), d_{HB})$. Note that $\overline{\Omega}_n$ and $\partial \Omega_n$ are all compact.  Since $(\overline{\Omega}_n)_n$ and $(\partial \Omega_n)_n$ are Cauchy sequences with respect to the Hausdorff distance $d^{X}_{H}$, there exist compact sets $S, T \subseteq X$ such that $d^{X}_{H}(\overline{\Omega}_n, S) \to 0$, $d^{X}_{H}(\partial \Omega_n, T) \to 0$ as $n \to \infty$ by \cite[Theorem 7.3.8]{BBI}. Note that $T \subseteq S$. Set $\Omega := S \setminus T$. We now prove that $\Omega$ is a nonempty set. Suppose $\Omega = \emptyset$. Then we have $S=T$. For each $n \in \mathbb{N}$, we can take a pair of points $x_n, y_n \in \Omega_n$ with $d(x_n, y_n) \geq R/2$ and an arc-length parametrized $A$-uniform curve $\gamma_n : [0, l_{d}(\gamma_n)] \to \Omega_n$ from $x_n$ to $y_n$. Then, by $A$-uniformity, we get
	\[
	\frac{R}{4} \leq A \text{dist}(\gamma_n(R/4), \partial \Omega_n) \to 0
	\]
	as $n \to \infty$, which is a contradiction. Hence $\Omega$ is nonempty.
	We next claim that $\Omega$ is a dense subset of $S$. Suppose that there exists $x \in T$ such that $B_d(x, r)\cap S \subseteq T$ for some $r>0$. We may assume that $r < R/2$.
	Set $\epsilon := \frac{r}{16A}$. 
	Then there exists $N_{\epsilon}\in \mathbb{N}$ such that for $n \geq N_{\epsilon}$, we get $d^{X}_{H}(\Omega_n, S)+d^{X}_{H}(\partial \Omega_n, T)< \epsilon$. We claim that for every point $z_n \in B_d(x, r/2)\cap \Omega_n$, dist$(z_n, T) \leq \epsilon$. In fact, we can take $z' \in S$ such that $d(z', z_n) \leq \epsilon$. Also,
	 \[d(x, z') \leq d(x, z_n)+d(z_n, z') \leq r/2+\epsilon < r.
	 \]
	  Hence $z' \in B_d(x, r)\cap S \subseteq T$, which implies that 
	  \[
	  \text{dist}(z_n, T) \leq d(z_n, z') \leq \epsilon.
	  \]  
	   Take $x_n \in \Omega_n$ satisfying $d(x_n, x) < \epsilon$. Since $\text{diam}(\Omega_n) \geq R$ and $r<R/2$, we have $\Omega_n \neq B_d(x_n, r) \cap \Omega_n$. Hence we can pick $y_n \in \Omega_n$ such that $d(x_n, y_n)\geq r$. Let $\gamma_n : [0, l_d(\gamma_n)] \to \Omega_n$ be an $A$-uniform curve in $\Omega_n$ from $x_n$ to $y_n$. Set $t : =r/4$. Noting that $\gamma_n(t) \in B_d(x, r/2)\cap \Omega_n$ and the above claim, we get
	\[
	t=l_d(\gamma_n|_{[0, t]})\wedge l_d(\gamma_n|_{[t, l_d(\gamma_n)]}) \leq A \text{dist}(\gamma_n(t), \partial \Omega_n) \leq A (\text{dist}(\gamma_n(t), T)+\epsilon) \leq 2A \epsilon \leq \frac{r}{8},
	\]
	which is a contradiction. Hence $\Omega$ is dense in $S$ and noncomplete. Local compactness of $\Omega$ follows directly from the fact that $d(x, T)>0$ for each $x \in \Omega$ and $S$ and $T$ are compact. From Lemma~\ref{uniform space stable}, we conclude that $\Omega$ is an $A$-uniform space. Therefore $\mathcal{U}_{X}(A, R)$ is complete. Regarding the compactness of $\mathcal{U}_{X}(A, R)$, since $X$ is compact, for every sequence $(\Omega_n)_n$ from $\mathcal{U}_{X}(A, R)$ we can take out a subsequence $(\Omega_{n_k})_k$ such that both $(\overline{\Omega}_{n_k})_k$ and $(\partial \Omega_{n_k})_k$ are Cauchy in $d_{H}$, which means $(\Omega_{n_k})_k$ is Cauchy in $d_{HB}$. Since $\mathcal{U}_{X}(A, R)$ is complete, there exists a limit $\Omega \in \mathcal{U}_{X}(A, R)$ such that $d_{HB}(\Omega_{n_k}, \Omega) \to 0$ as $k \to \infty$. 
\end{proof}

\begin{defn}
	Let $A \geq 1$ and $R>0$. Define 
	\begin{equation}
 	\mathcal{U}(A, R) := \left\lbrace [\Omega] \in \mathcal{M} \;\middle|\;
  \begin{tabular}{@{}l@{}}
    $\Omega$ is a bounded $A$-uniform space and $\text{diam} (\Omega) \geq R$
   \end{tabular}
 \right\rbrace \notag
 \end{equation}
 where $[\Omega]$ is an equivalence class of $\Omega$.
\end{defn}

\begin{thm}[Second assertion of Theorem~\ref{third theorem}]
	The metric space $(\mathcal{U}(A, R), d_{GHB})$ is a complete metric space.
\end{thm}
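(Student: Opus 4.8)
The plan is to reduce the completeness of $(\mathcal{U}(A,R), d_{GHB})$ to the compact ambient setting of Proposition~\ref{HB-basics} by means of Gromov's embedding theorem. First I would take a Cauchy sequence $([\Omega_n])_n$ in $(\mathcal{U}(A,R), d_{GHB})$ and observe, using inequality \eqref{converse not} in the form $d_{GH}(\overline{\Omega}_n,\overline{\Omega}_m)\le d_{GHB}(\Omega_n,\Omega_m)$, that $(\overline{\Omega}_n)_n$ is Cauchy, hence totally bounded as a subset, in the space of compact metric spaces equipped with $d_{GH}$. Each $\overline{\Omega}_n$ is indeed compact, since a bounded $A$-uniform space is precompact (as recalled before Lemma~\ref{uniform space stable}). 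Consequently the family $\{\overline{\Omega}_n\}_n$ is totally bounded with respect to $d_{GH}$, so by Proposition~\ref{cap-cov} it satisfies the uniform diameter and net bounds, and Proposition~\ref{Gromov embedding} then furnishes a single compact set $K\subseteq l^\infty$ together with isometric embeddings $\iota_n:\overline{\Omega}_n\hookrightarrow K$.

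Next I would transport the problem into $K$. Writing $\Omega_n':=\iota_n(\Omega_n)\subseteq K$, the map $\iota_n$ is an isometry and $\iota_n(\overline{\Omega}_n)$ is compact, hence closed in $K$; since $\Omega_n$ is dense in $\overline{\Omega}_n$, the closure of $\Omega_n'$ taken inside $K$ equals $\iota_n(\overline{\Omega}_n)$ and its boundary equals $\iota_n(\partial\Omega_n)$. Thus each $\Omega_n'$ is a bounded $A$-uniform subset of $K$ with $\diam(\Omega_n')\ge R$, i.e. $\Omega_n'\in\mathcal{U}_K(A,R)$, and it is isometric to $\Omega_n$ with matching boundary. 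By Proposition~\ref{HB-basics} the space $(\mathcal{U}_K(A,R), d_{HB})$ is compact, so $(\Omega_n')_n$ admits a subsequence $(\Omega_{n_k}')_k$ converging in $d_{HB}$ to some $\Omega\in\mathcal{U}_K(A,R)$.

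Finally I would pass from $d_{HB}$ back to $d_{GHB}$ and from the subsequence to the full sequence. Taking $Z=K$ with the inclusions in the definition of $d_{GHB}$ gives $d_{GHB}(\Omega_{n_k}',\Omega)\le d_{HB}(\Omega_{n_k}',\Omega)\to 0$. Since $\Omega_{n_k}'$ is isometric to $\Omega_{n_k}$, the isometry invariance of $d_{GHB}$ (Remark~\ref{isometry1} together with the triangle inequality of Lemma~\ref{triangle inequality}) yields $d_{GHB}(\Omega_{n_k},\Omega)=d_{GHB}(\Omega_{n_k}',\Omega)\to 0$, so the subsequence converges to $[\Omega]\in\mathcal{U}(A,R)$ in $d_{GHB}$. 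A Cauchy sequence possessing a convergent subsequence converges to the same limit, whence $[\Omega_n]\to[\Omega]$ and completeness follows.

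The main obstacle is the passage to a common compact ambient space. A priori the embeddings produced by Gromov's theorem bear no relation to the $d_{GHB}$-distances, so one cannot expect $(\Omega_n')_n$ to be $d_{HB}$-Cauchy inside $K$, and mere completeness of $(\mathcal{U}_K(A,R), d_{HB})$ would not suffice. This is exactly why I invoke the \emph{compactness} of $(\mathcal{U}_K(A,R), d_{HB})$ from Proposition~\ref{HB-basics}: it produces a $d_{HB}$-convergent subsequence unconditionally, and the bound $d_{GHB}\le d_{HB}$ combined with the Cauchy hypothesis upgrades this to convergence of the whole sequence. The one technical point to verify carefully is that the boundary of $\Omega_n'$ computed inside $K$ coincides with $\iota_n(\partial\Omega_n)$, which holds because completions embed as compact, hence closed, subsets of $K$.
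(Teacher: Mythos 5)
Your proposal is correct and follows essentially the same route as the paper's proof: reduce to Cauchyness of the completions in $d_{GH}$, invoke Proposition~\ref{Gromov embedding} to place everything in a single compact $K\subseteq l^\infty$, use the compactness of $(\mathcal{U}_K(A,R),d_{HB})$ from Proposition~\ref{HB-basics} to extract a $d_{HB}$-convergent subsequence, and upgrade to convergence of the full sequence via the Cauchy hypothesis. The only difference is that you spell out details the paper leaves implicit (the identification of boundaries under the embeddings and the observation that compactness, not mere completeness, of $\mathcal{U}_K(A,R)$ is what is needed), which is a sound elaboration rather than a different argument.
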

\begin{proof}
	Let $(\Omega_n)_n$ be a Cauchy sequence of uniform spaces with respect to $d_{GHB}$. Note that $(\overline{\Omega_n})_n$ is Cauchy with respect to $d_{GH}$ and hence it is totally bounded. By Proposition~\ref{Gromov embedding} there exists a compact set $K \subseteq l^{\infty}$ such that every $\overline{\Omega}_n$ admits an isometric embedding into $K$. Since $\mathcal{U}_{K}(A, R)$ is compact by Proposition~\ref{HB-basics},  there exists a subsequence $(\Omega_{n_k})_k$ and $\Omega \in \mathcal{U}_{K}(A, R)$ such that $\Omega_{n_k} \to \Omega$ in $d_{HB}$ as $k \to \infty$, which implies $d_{GHB}(\Omega_{n_k}, \Omega) \to 0$ as $k \to \infty$. Since the original sequence is Cauchy, $d_{GHB}(\Omega_{n}, \Omega) \to 0$ as $n \to \infty$.
\end{proof}

\begin{thm}\label{Compactness for uniform spaces}(Compactness theorem for uniform spaces)
	Let $(\Omega_n)_n \subseteq \mathcal{U}(A, R)$. If one of the conditions in Proposition~\ref{cap-cov} holds for $(\overline{\Omega}_n)_n$, then there exists a subsequence $(\Omega_{n_k})_k$ such that $\Omega_{n_k} \to \Omega \in \mathcal{U}(A, R)$ as $k \to \infty$ with respect to $d_{GHB}$.
\end{thm}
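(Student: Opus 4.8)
The plan is to reduce the statement to the compactness of $(\mathcal{U}_K(A, R), d_{HB})$ established in Proposition~\ref{HB-basics}, by first placing all of the spaces $\overline{\Omega}_n$ inside one common compact ambient space. The argument runs exactly parallel to the completeness proof of the second assertion of Theorem~\ref{third theorem}, except that here we invoke the hypothesis of Proposition~\ref{cap-cov} directly rather than deriving total boundedness from a Cauchy assumption.

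First I would observe that, since one of the conditions in Proposition~\ref{cap-cov} holds for $(\overline{\Omega}_n)_n$, the family $\{\overline{\Omega}_n\}_n$ is totally bounded with respect to $d_{GH}$, and hence by Proposition~\ref{Gromov embedding} there is a compact set $K \subseteq l^{\infty}$ admitting an isometric embedding of every $\overline{\Omega}_n$. Fixing such embeddings, I would identify each $\overline{\Omega}_n$ with its image in $K$. Since $K$ is complete and the image of $\overline{\Omega}_n$ is compact, hence closed, the closure of $\Omega_n$ taken in $K$ is exactly the image of $\overline{\Omega}_n$, so the ambient boundary $\overline{\Omega_n}\setminus \Omega_n$ computed in $K$ coincides with the image of the intrinsic completion boundary $\partial \Omega_n$. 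Because isometric embeddings preserve lengths of curves, distances to the boundary, and diameters, each $\Omega_n$ is a bounded $A$-uniform subset of $K$ with $\diam(\Omega_n) \geq R$; that is, $\Omega_n \in \mathcal{U}_K(A, R)$.

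Next I would apply Proposition~\ref{HB-basics}: $(\mathcal{U}_K(A, R), d_{HB})$ is a compact metric space, so there exist a subsequence $(\Omega_{n_k})_k$ and a limit $\Omega \in \mathcal{U}_K(A, R)$ with $d_{HB}(\Omega_{n_k}, \Omega) \to 0$ as $k \to \infty$. Finally, taking the common space $Z = K$ together with the inclusion maps in the definition of $d_{GHB}$ yields
\[
d_{GHB}(\Omega_{n_k}, \Omega) \leq d^{K}_{H}(\Omega_{n_k}, \Omega) + d^{K}_{H}(\partial \Omega_{n_k}, \partial \Omega) = d_{HB}(\Omega_{n_k}, \Omega) \to 0,
\]
and since $\Omega \in \mathcal{U}_K(A, R)$ we have $[\Omega] \in \mathcal{U}(A, R)$, which completes the argument.

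The step I expect to require the most care is the verification that the isometric embedding into $K$ faithfully transports the uniform-space structure; in particular that the boundary computed in the ambient space $K$ agrees with the intrinsic boundary $\partial \Omega_n$, so that the $A$-uniformity condition (2) of Definition~\ref{uniform curve}, which refers to $\dist(\gamma(t), \partial \Omega)$, is genuinely preserved. Everything else is a bookkeeping reduction to the already-proven Proposition~\ref{Gromov embedding} and Proposition~\ref{HB-basics}.
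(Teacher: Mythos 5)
Your proposal is correct and follows essentially the same route as the paper's own proof: embed all the $\overline{\Omega}_n$ into a common compact $K \subseteq l^\infty$ via Proposition~\ref{Gromov embedding}, extract a $d_{HB}$-convergent subsequence using the compactness of $(\mathcal{U}_K(A,R), d_{HB})$ from Proposition~\ref{HB-basics}, and conclude $d_{GHB}$-convergence since $d_{GHB} \leq d_{HB}$ for subsets of a common ambient space. The additional verifications you flag (that the ambient boundary in $K$ agrees with the intrinsic completion boundary, and that $A$-uniformity transports under isometric embedding) are left implicit in the paper but are exactly the right points to check.
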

\begin{proof}
	Since $(\overline{\Omega}_n)_n$ is totally bounded with respect to $d_{GH}$, by Proposition~\ref{Gromov embedding} there exists a compact set $K \subseteq l^{\infty}$ such that every $\overline{\Omega}_n$ admits an isometric embedding into $K$. By Proposition~\ref{HB-basics}, there exists $(\Omega_{n_k})_k$ such that $\Omega_{n_k} \to \Omega \in \mathcal{U}_{K}(A, R)$ with respect to  $d_{HB}$, which implies that $\Omega_{n_k} \to \Omega$ with respect to $d_{GHB}$. 
\end{proof}
\begin{rmk}
	A type of compactness theorem for a sequence of noncomplete precompact metric spaces has also been proved in \cite[Theorem 4.1]{PS}. It was proven there that for a sequence of noncomplete precompact metric spaces $(X_n)_n$ converging to a limit space $X$ and a given $(\delta_i)_i$ monotonically decreasing to $0$, there exists a subsequence $(X_{n_k})_k$ such that $\delta_i$-inner regions converge to the set inside the limit space $X$ for all $i \in \mathbb{N}$. Note that a $\delta$-inner region is the set of all points $\delta$-away from  the boundary. Our result, however, states that under the additional assumption that metric spaces are $A$-uniform, we can find a limit space such that the metric spaces and their boundaries converge to the limit and its metric boundary, respectively. 
\end{rmk}
\begin{rmk}\label{PI space}
	There is a remark on stability of noncomplete PI spaces. Here we say that a metric measure space $(X, d, \mu)$ is a PI space if $\mu$ is a doubling measure and $X$ supports a Poincar\'e inequality. We refer readers to \cite{HK} for the exact definition of PI spaces. 
	We say that $(X_n, d_n, \mu_n)_n \subseteq \mathcal{M}$ measured Gromov-Hausdorff converges to $(X, d, \mu) \in \mathcal{M}$ with boundary if there exist a metric space $(Z, d_{Z})$ and isometric embeddings $\iota_n : \overline{X}_n \hookrightarrow Z$,  $\iota : \overline{X} \hookrightarrow Z$ such that
	\begin{itemize}
		\item $d_{H}^{Z}(\iota_n(X_n), \iota(X))+d_{H}^{Z}(\iota(\partial X_n), \iota(\partial X)) \to 0$ as $n \to \infty$,
		\item $\iota_{n, *}\mu_n \rightharpoonup \iota_{*}\mu$ (weak* convergence in $C_b(Z)^{*}$),
	\end{itemize}
	where $C_b(Z)$ is the set of all continuous functions with bounded support and $\iota_{n, *}\mu_n$, $\iota_{*}\mu$ are pushforward measures of $\mu_n$ and $\mu$, respectively. For a given sequence of bounded PI uniform spaces $(X_n, d_n, \mu_n)_n$ measured Gromov-Hausdorff converging to $(X, d, \mu) \in \mathcal{M}$ with boundary, we claim that $(X, d, \mu)$ is a PI space. In fact, we have already shown, by Theorem \ref{third theorem}, that $X$ is a uniform space. Additionally, the sequence $(\overline{X}_n, d_n, \mu_n)$ is a PI space by \cite[Lemma 8.2.3]{HKST}. Moreover, by \cite[Chapter 9]{Ch} and \cite[Theorem 3]{Kei}, $(\overline{X}, d, \mu)$ supports a Poincar\'e inequality. Since the limit space $X$ is a uniform domain in the PI space $\overline{X}$, we conclude that $X$ supports a Poincar\'e inequality by \cite[Theorem 4.4]{BSh}.
\end{rmk}


\section{Stability of $M$-roughly starlike $\delta$-Gromov hyperbolic spaces}
In this section we prove the stability of a sequence of $M$-roughly starlike $\delta$-Gromov hyperbolic spaces, under the assumption that a limit exists. Recall again that curves are parametrized by arclength with respect to $d$. We first prove that $\delta$-Gromov hyperbolicity is stable under the pointed Gromov-Hausdorff convergence.
\begin{prop}\label{GH-limit}
Let  $(X_n, d_n, p_n)_n$ be a sequence of pointed $\delta$-Gromov hyperbolic  spaces. Suppose that $(X_n, d_n, p_n)_n$ is pointed Gromov-Hausdorff convergent to $(X, d, p)$ as $n \to \infty$. Then $X$ is a $\delta$-Gromov hyperbolic space.
\end{prop}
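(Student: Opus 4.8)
The plan is to verify the four-point Gromov product inequality on $X$ directly, transporting it from the spaces $X_n$ through the approximate isometries supplied by the pointed Gromov--Hausdorff convergence. Fix four points $x,y,z,m\in X$ and let $\epsilon>0$. Choose $r>0$ so large that $x,y,z,m\in B_{d}(p,r-\epsilon)$, and take $n$ large enough that the map $f^{\epsilon}_n\colon B_{d_n}(p_n,r)\to X$ of Definition~\ref{p-GH} exists. Since these maps run from $X_n$ \emph{into} $X$ rather than the other way, the key first move is to produce approximate preimages of the chosen points: by condition (3) of Definition~\ref{p-GH} one may pick $\tilde x,\tilde y,\tilde z,\tilde m\in B_{d_n}(p_n,r)$ with $d(f^{\epsilon}_n(\tilde x),x)\le\epsilon$ and likewise for $y,z,m$.

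The next step is to apply $\delta$-hyperbolicity of $X_n$ with base point $\tilde m$, using that the four-point condition holds for \emph{every} base point. This gives
\[
(\tilde x|\tilde z)_{\tilde m}\ \ge\ (\tilde x|\tilde y)_{\tilde m}\wedge(\tilde y|\tilde z)_{\tilde m}-\delta,
\]
where the Gromov products are computed in $(X_n,d_n)$. I then compare these with the products computed in $(X,d)$. For any two of the chosen points, combining condition (2) of Definition~\ref{p-GH} with the $\epsilon$-closeness of $f^{\epsilon}_n(\tilde a)$ to $a$ and the triangle inequality yields $|d_n(\tilde a,\tilde b)-d(a,b)|<3\epsilon$. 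Since each Gromov product is a $\tfrac12$-combination of three such distances, it follows that the products differ by at most $\tfrac92\epsilon$ for each relevant triple.

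Substituting these estimates into the displayed inequality and using the identity $(s-c)\wedge(t-c)=(s\wedge t)-c$ produces
\[
(x|z)_m\ \ge\ (x|y)_m\wedge(y|z)_m-\delta-9\epsilon.
\]
Because $\epsilon>0$ was arbitrary, letting $\epsilon\to0$ gives the four-point condition on $X$ with the \emph{same} constant $\delta$. The remaining structural requirements of a $\delta$-Gromov hyperbolic space in the paper's sense --- properness and the geodesic property --- are inherited by the limit from Remark~\ref{limit is fine}, so $X$ is $\delta$-Gromov hyperbolic.

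I expect the only delicate part to be the error bookkeeping. The ``wrong direction'' of the approximating maps is what forces the use of the almost-onto condition (3), and one must check that the additive errors in the three distances defining each Gromov product accumulate only to a fixed multiple of $\epsilon$ that vanishes in the limit, so that $\delta$ is left unchanged. The choice of $\tilde m$ rather than the marked point $p_n$ as the base point in $X_n$ is what makes the transfer clean, since it removes any need to relate $m$ back to $p$.
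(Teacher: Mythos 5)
Your proposal is correct and follows essentially the same route as the paper's proof: pull back the four points into $X_n$ via condition (3) of Definition~\ref{p-GH}, apply the four-point condition there with the approximate preimage of $m$ as base point, transfer the Gromov products back through condition (2) plus the $\epsilon$-closeness of images, and let $\epsilon\to 0$. The only difference is bookkeeping (you fold the comparison into a single estimate $|d_n(\tilde a,\tilde b)-d(a,b)|<3\epsilon$ yielding a $9\epsilon$ total error, whereas the paper splits it into two $\tfrac32\epsilon$ steps yielding $6\epsilon$), which is immaterial since both errors vanish in the limit.
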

\begin{proof}
	Let $\epsilon>0$ be fixed. Pick arbitrary four points $x, y, z, m \in X$. Then let $R>0$ so that all of these four points are in $B_d(p, R-\epsilon)$. From Property (3) in Definition~\ref{p-GH}, we know that for sufficiently large $n \in \mathbb{N}$, there exist $m_{n}, x_{n}, y_{n}, z_{n} \in X_{n}$ such that
	\[
	d(f^{\epsilon}_{n}(m_{n}), m)<\epsilon , \ \ d(f^{\epsilon}_{n}(x_{n}), x)<\epsilon , \ \ d(f^{\epsilon}_{n}(y_{n}), y)<\epsilon , \ \ d(f^{\epsilon}_{n}(z_{n}), z)<\epsilon.
	\]
	 Since $X_{n}$ is a $\delta$-Gromov hyperbolic space, we have
	 \[
	 (x_{n}|z_{n})_{m_{n}}\geq (x_{n}|y_{n})_{m_{n}}\wedge(y_{n}|z_{n})_{m_{n}}-\delta.
	 \]
	  From Property (2) in Definition \ref{p-GH}, we notice that 
	  \[
	  (x_{n}|z_{n})_{m_{n}}\leq (f^{\epsilon}_{n}(x_{n})|f^{\epsilon}_{n}(z_{n}))_{f^{\epsilon}_{n}(m_{n})}+\frac{3}{2}\epsilon.
	  \]
	  Similarly, we can get
	  \[
	  (x_{n}|y_{n})_{m_{n}}\wedge(y_{n}|z_{n})_{m_{n}}\geq (f^{\epsilon}_{n}(x_{n})|f^{\epsilon}_{n}(y_{n}))_{f^{\epsilon}_{n}(m_{n})}\wedge(f^{\epsilon}_{n}(y_{n})|f^{\epsilon}_{n}(z_{n}))_{f^{\epsilon}_{n}(m_{n})}-\frac{3}{2}\epsilon
	  \]
	   Combining all of these inequalities, we have
	   \[
	   \begin{split}
	   	(f^{\epsilon}_{n}(x_{n})|f^{\epsilon}_{n}(z_{n}))_{f^{\epsilon}_{n}(m_{n})}+\frac{3}{2}\epsilon &\geq (f^{\epsilon}_{n}(x_{n})|f^{\epsilon}_{n}(y_{n}))_{f^{\epsilon}_{n}(m_{n})}\wedge(f^{\epsilon}_{n}(y_{n})|f^{\epsilon}_{n}(z_{n}))_{f^{\epsilon}_{n}(m_{n})}\\
	   	&\ \ \ \ -\frac{3}{2}\epsilon - \delta.
	   \end{split}
	   \]
	   Also, from direct calculation, we have
	   \[
	   (f^{\epsilon}_{n}(x_{n})|f^{\epsilon}_{n}(z_{n}))_{f^{\epsilon}_{n}(m_{n})}\leq (x|z)_{m}+3\epsilon/2,
	   \]
	   and
	   \[
	   (f^{\epsilon}_{n}(x_{n})|f^{\epsilon}_{n}(y_{n}))_{f^{\epsilon}_{n}(m_{n})}\wedge(f^{\epsilon}_{n}(y_{n})|f^{\epsilon}_{n}(z_{n}))_{f^{\epsilon}_{n}(m_{n})}\geq (x|y)_{m}\wedge(y|z)_{m}-3\epsilon/2.
	   \]
	    We conclude that
	    \[
	    (x|z)_{m}+3\epsilon \geq (x|y)_{m}\wedge(y|z)_{m}-3\epsilon-\delta.
	    \]
	     Letting $\epsilon$ tend to 0 completes the proof.
	 \end{proof}
	 
We next prove that the roughly starlike property is stable. 
\begin{prop}\label{roughly starlike limit}
	Let $(X_n, d_n, p_n)_n$ be a sequence of pointed proper geodesic $M$-roughly starlike $\delta$-Gromov hyperbolic spaces which is pointed Gromov-Hausdorff convergent to some metric space $(X, d, p)$. Then, there exists $\tilde{M}:=\tilde{M}(M, \delta)$ such that $(X, d, p)$ is $\tilde{M}$-roughly starlike.
\end{prop}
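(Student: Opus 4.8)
The plan is to build, for each point $x \in X$, a genuine geodesic ray in the limit space $X$ emanating from $p$ and passing within a $(M,\delta)$-controlled distance of $x$, by transporting the rough-starlikeness rays of the approximating spaces $X_n$ through the almost-isometries $f_n^\epsilon$ and then extracting a limit. The mechanism that makes the constant depend only on $M$ and $\delta$ is the $\delta$-hyperbolicity of $X$, already established in Proposition~\ref{GH-limit}: transporting a genuine geodesic by $f_n^\epsilon$ only yields a quasigeodesic, and it is hyperbolicity that lets one pass back to an honest geodesic with uniform error. I first record that, since the $X_n$ are proper and geodesic, the limit $(X,d,p)$ is proper and geodesic by Remark~\ref{limit is fine}; thus geodesics between points of $X$ exist and closed balls are compact, which is what the extraction will rely on.

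Fix $x \in X$ and set $r_0 := d(p,x)$. I claim it suffices to produce, for each large $L$, a genuine geodesic segment $\sigma_L : [0,\ell_L] \to X$ with $\sigma_L(0)=p$, $\ell_L \ge L-1$, and $\dist(x,\sigma_L) \le M'$ for a constant $M' = M'(M,\delta)$ independent of $L$. Indeed, if $\sigma_L(\tau_L)$ realizes $\dist(x,\sigma_L)$, then $\tau_L = d(p,\sigma_L(\tau_L)) \le r_0 + M'$ is bounded; the $\sigma_L$ are $1$-Lipschitz and, for bounded parameter, remain in a fixed compact ball about $p$, so Arzel\`a--Ascoli together with a diagonal argument over $[0,T]$ as $T \to \infty$ yields a $1$-Lipschitz limit $\sigma : [0,\infty) \to X$. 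A uniform limit of geodesics is a geodesic, so $\sigma$ is a geodesic ray with $\sigma(0)=p$; passing to a subsequence with $\tau_L \to \tau$ gives $\dist(x,\sigma) \le d(\sigma(\tau),x) \le M'$. Hence $\tilde M := M'$ witnesses $\tilde M$-rough starlikeness, and the whole problem reduces to producing such segments.

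To construct $\sigma_L$, I would fix $L$, choose $\epsilon \le 1$ (with $\epsilon \to 0$ as $L \to \infty$), and set $r := r_0 + M + L + 3$. By Definition~\ref{p-GH}(3) there is, for $n$ large, a point $x_n \in B_{d_n}(p_n,r)$ with $d(f_n^\epsilon(x_n),x) \le \epsilon$, whence $d_n(p_n,x_n) \le r_0 + 2\epsilon$. Using $M$-rough starlikeness of $X_n$, pick a geodesic ray $\gamma_n$ from $p_n$ and $t_n \ge 0$ with $d_n(x_n,\gamma_n(t_n)) \le M$; then $t_n = d_n(p_n,\gamma_n(t_n)) \le r_0 + M + 2\epsilon$. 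Put $s := t_n + L$, so that $\gamma_n([0,s]) \subseteq B_{d_n}(p_n,r)$ and $f_n^\epsilon \circ \gamma_n|_{[0,s]}$ is defined. Since $\gamma_n$ is unit-speed geodesic and $f_n^\epsilon$ distorts distances by at most $\epsilon$ (Definition~\ref{p-GH}(2)), the image is a $(1,\epsilon)$-quasigeodesic from $p$ to $q_n := f_n^\epsilon(\gamma_n(s))$, the point $w := f_n^\epsilon(\gamma_n(t_n))$ satisfies $d(w,x) \le M + 2\epsilon$, and the genuine geodesic $\sigma_L := [p,q_n]$ has $d(p,q_n) \ge s-\epsilon \ge L-1$.

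Finally I would estimate $\dist(x,\sigma_L)$ via hyperbolicity. From $|d(p,w)-t_n|\le\epsilon$, $|d(w,q_n)-(s-t_n)|\le\epsilon$, and $|d(p,q_n)-s|\le\epsilon$ one gets $0 \le (p\,|\,q_n)_w \le \tfrac32\epsilon$. The elementary bound $(p\,|\,q_n)_w \le \dist(w,[p,q_n])$ holds always, while in the $\delta$-hyperbolic space $X$ the companion comparison $\dist(w,[p,q_n]) \le (p\,|\,q_n)_w + C\delta$, with a universal constant $C$, gives $\dist(x,[p,q_n]) \le d(x,w)+\dist(w,[p,q_n]) \le M + C\delta + \tfrac72\epsilon$. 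Letting $L \to \infty$ (hence $\epsilon \to 0$) produces the required segments with $M' := M + C\delta$, so $\tilde M = \tilde M(M,\delta)$ as desired. I expect the main obstacle to be exactly this last conversion: $f_n^\epsilon$ degrades the genuine rays $\gamma_n$ into quasigeodesics, and only the hyperbolicity of the limit restores a genuine geodesic staying within an $(M,\delta)$-controlled distance of $x$, after which properness upgrades the finite segments to an honest ray.
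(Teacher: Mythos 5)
Your proof is correct, and it shares the paper's skeleton: pull $x$ back to a point $x_n\in X_n$, use $M$-rough starlikeness there to get a ray $\gamma_n$ passing within $M$ of $x_n$, push a long initial segment of $\gamma_n$ into $X$ by $f_n^{\epsilon}$, replace the resulting $(1,\epsilon)$-quasi-isometric path by a genuine geodesic from $p$ to its endpoint $q_n$, and extract a limit ray by an Arzel\`a--Ascoli argument (the paper runs this with $\epsilon=1/k$ and $r=k$ tied together, while you decouple the segment length $L$ from $\epsilon$; that is only a cosmetic difference). The genuine divergence is in the key estimate that the geodesic $[p,q_n]$ passes within $M+O(\delta)$ of $x$. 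The paper invokes the geodesic stability theorem \cite[Theorem 3.7]{Jussi} (the Morse lemma) to get $d_{H}(f_n^{\epsilon}\circ\gamma_n,[p,q_n])\le M(\delta)$, i.e.\ Hausdorff control of the \emph{entire} transported path, and then locates the point near $x$ on it. You instead observe that the single relevant point $w=f_n^{\epsilon}(\gamma_n(t_n))$ is nearly between $p$ and $q_n$, because $f_n^{\epsilon}$ barely distorts the three pairwise distances, so $(p|q_n)_w\le\tfrac{3}{2}\epsilon$, and then apply the standard comparison $\dist(w,[p,q_n])\le (p|q_n)_w+C\delta$ valid in geodesic $\delta$-hyperbolic spaces. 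This buys elementarity: a pointwise estimate provable in a few lines from the four-point condition replaces quasi-geodesic stability, whose proof is considerably more involved; what the Morse lemma buys the paper is that no Gromov-product bookkeeping is needed and every point of the transported ray is controlled at once (more than is actually required here). Both routes yield $\tilde M=M+C(\delta)$, and both rely, as you correctly note, on the limit being proper and geodesic (Remark~\ref{limit is fine}) and $\delta$-hyperbolic (Proposition~\ref{GH-limit}). One bookkeeping remark: your segments satisfy $\dist(x,\sigma_L)\le M+C\delta+\tfrac{7}{2}\epsilon(L)$ rather than $M+C\delta$; this is harmless---any $L$-independent bound suffices for the extraction step, and since $\epsilon(L)\to 0$ the limit ray still satisfies $\dist(x,\sigma)\le M+C\delta$.
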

\begin{proof}
	For each $k \in \mathbb{N}$, set $\epsilon := 1/k$ and $r:= k$. Then there exists $n_{k} \in \mathbb{N}$ such that $f^{1/k}_{n_{k}} : B(p_{n_{k}}, k)\to X$ with the properties in Definition~\ref{p-GH}. For each $x \in X$, take $k_x \in \mathbb{N}$ such that $k_x \geq d(p, x)+1$. Note that we can find $x_{n_{k}}\in B_{d_{n_k}}(p_{n_{k}},k)$ such that $d(f^{1/k}_{n_{k}}(x_{n_{k}}), x)\leq 1/k$ for each $k \geq k_x$. Since $X_{n_{k}}$ is $M$-roughly starlike and $x_{n_{k}} \in X_{n_{k}}$, there is a geodesic ray $\gamma_{{n}_{k}} \subseteq X_{n_{k}}$ such that $\gamma_{{n}_{k}}(0)=p_{n_{k}}$ and $\text{dist}(x_{n_{k}}, \gamma_{{n}_{k}})\leq M$. Note that since $\gamma_{{n}_{k}}$ is a geodesic ray, $\gamma_{{n}_{k}}([0, k-1]) $ is in $ B_{d_{n_k}}(p_{n_{k}}, k)$ and $f^{1/k}_{n_{k}} \circ \gamma_{{n}_{k}}|_{[0, k-1]}$ is a $(1, 1/k)$-quasi-isometric path(see Definition \ref{}). Since the limit space $X$ is geodesic, we have a geodesic curve $\tilde{\gamma}_{n_{k}}$ emanating from $p$ to $f^{1/k}_{n_{k}}(\gamma_{{n}_{k}}(k-1))$. Note that the limit space is proper geodesic $\delta$-Gromov hyperbolic by Proposition~\ref{GH-limit}. Therefore from the geodesic stability theorem \cite[Theorem 3.7]{Jussi} we get 
	\begin{equation}\label{stab}
		d_{H}(\tilde{\gamma}_{n_{k}}, f^{1/k}_{n_{k}} \circ \gamma_{{n}_{k}}|_{[0, k-1]}) \leq M(\delta)
	\end{equation}
	for some constant $M(\delta)$ depending only on $\delta$. Pick $t_{n_{k}}\in [0, \infty)$ such that $d_{n_k}(x_{n_{k}}, \gamma_{n_{k}}(t_{n_{k}})) \leq M$. We claim that $(t_{n_{k}})_{k}$ has a uniform bound from above. In fact, we have
	\begin{equation}\label{stab1}
	d_{n_k}(p_{n_{k}}, x_{n_{k}})\leq d(p, f^{1/k}_{n_{k}}(x_{n_{k}}))+1/k \leq d(p, x)+ d(x, f^{1/k}_{n_{k}}(x_{n_{k}}))+1/k < C(x)
	\end{equation}
	where $C(x)=d(p, x)+2$. Therefore $x_{n_{k}} \in B_{d_{n_k}}(p_{n_{k}}, C(x))$ and we conclude that
	\[
	t_{n_{k}}=d_{n_k}(p_{n_{k}}, \gamma_{n_{k}}(t_{n_{k}})) \leq d_{n_k}(p_{n_{k}}, x_{n_{k}}) + d_{n_k}(x_{n_{k}}, \gamma_{n_{k}}(t_{n_{k}})) \leq  C(x)+M.
	\]
	Therefore, for $k$ large enough, $\gamma_{n_{k}}(t_{n_{k}}) \in B_{d_{n_k}}(p_{n_{k}}, k)$, $t_{n_{k}}\leq k-1$, and we get
	\begin{equation}\label{roughly starlike 1}
		d(f^{1/k}_{n_{k}}(x_{n_{k}}), f^{1/k}_{n_{k}}(\gamma_{n_{k}}(t_{n_{k}}))) \leq d_{n_k}(x_{n_{k}}, \gamma_{n_{k}}(t_{n_{k}}))+1/k\leq M+1/k.
	\end{equation}
	 From the stability consequence \eqref{stab}, there exists $s_{n_{k}} \in [0, l_d(\tilde{\gamma}_{n_k})]$ such that 
	\begin{equation}\label{roughly starlike 2}
		d(\tilde{\gamma}_{n_{k}}(s_{n_{k}}), f^{1/k}_{n_{k}}(\gamma_{n_{k}}(t_{n_{k}}))) \leq M(\delta).
	\end{equation}
	 Combining \eqref{roughly starlike 1} and \eqref{roughly starlike 2}, we have
	\[
	d(f^{1/k}_{n_{k}}(x_{n_{k}}), \tilde{\gamma}_{n_{k}})\leq d(f^{1/k}_{n_{k}}(x_{n_{k}}), \tilde{\gamma}_{n_{k}}(s_{n_k})) \leq M+M(\delta)+1/k.
	\]
	Moreover, since $d(x, f^{1/k}_{n_{k}}(x_{n_{k}}))< 1/k$, note that 
	\begin{equation}\label{roughly starlike 3}
		d(x, \tilde{\gamma}_{n_{k}}) \leq d(x, \tilde{\gamma}_{n_{k}}(s_{n_k})) \leq M+M(\delta)+2/k.
	\end{equation}
	By an Arzela-Ascoli type argument, there exists a geodesic ray $\gamma$ and a further subsequence of $\tilde{\gamma}_{n_{k}}$ (still denoted by $\tilde{\gamma}_{n_{k}}$) such that $\tilde{\gamma}_{n_{k}} \to \gamma$ locally uniformly as $k \to \infty$. From the estimates \eqref{stab1} and \eqref{roughly starlike 2}, we have 
	\[
	\begin{split}
	s_{n_{k}}=d(p, \tilde{\gamma}_{n_{k}}(s_{n_{k}})) &\leq d(p, f^{1/k}_{n_{k}}(\gamma(t_{n_{k}})))+d(f^{1/k}_{n_{k}}(\gamma(t_{n_{k}})), \tilde{\gamma}_{n_{k}}(s_{n_{k}}))\\
	 &\leq C(x)+M+2/k+ M(\delta).
	 \end{split}
	\]
	Hence $(s_{n_{k}})_{k}$ is bounded. By taking a subsequence again if needed, we may assume that $s_{n_{k}}$ is convergent to some $s \in \mathbb{R}$. Finally, from the uniform convergence of $\tilde{\gamma}_{n_{k}}|_{[0, l]}$ where $l> s$ and the estimate \eqref{roughly starlike 3}, we have 
	\[
	d(x, \gamma(s))=\lim_{k\to \infty} d(x , \tilde{\gamma}_{n_{k}}(s_{n_{k}}))\leq M+M(\delta).
	\]
\end{proof}

 \begin{proof}[Proof of Theorem \ref{first theorem}]
 	Combine Remark~\ref{limit is fine}, Proposition~\ref{GH-limit}, and Proposition~\ref{roughly starlike limit}.
 \end{proof}

\section{Stability of uniformized spaces, and boundaries}

In this section, we prove Theorem \ref{second theorem}. Recall again that we fix the constant $\epsilon_{0}(\delta)>0$ from Remark~\ref{epsilon constraint}. Also, unless otherwise stated, all curves in a metric space $(X, d)$ are assumed to be parametrized by arclength with respect to $d$.


In order to prove Theorem \ref{second theorem}, we need the following lemmas.

\begin{lemma}\label{bhk}
 	Let $(X, d, p)$ be a pointed proper geodesic $\delta$-Gromov hyperbolic space and $0<\epsilon \leq \epsilon_{0}(\delta):=\frac{1}{14L}$ be a fixed constant where
 	\[
 	L :=6(1+24\delta)(2+40\delta)+48\delta+3.
 	\] 
 	Then for each $R>0$ and every pair of points $x$ and $y$ in $\bar{B}_d(p, R)$, there exists a curve $\gamma$ that is geodesic with respect to $d_{\epsilon}$ such that $\gamma \subseteq \bar{B}_d(p, 3R+L)$.
 \end{lemma}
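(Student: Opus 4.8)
The plan is to compare the $d_\epsilon$-geodesic with the ordinary $d$-geodesic joining $x$ and $y$ and to split the estimate into a soft part (responsible for the term $3R$) and a hyperbolic part (responsible for the additive constant $L$). Write $\rho_\epsilon(z):=e^{-\epsilon d(p,z)}$, so that $d_\epsilon(a,b)=\inf_{\sigma}\int_{\sigma}\rho_\epsilon\,dl_d$. First I would fix a $d_\epsilon$-geodesic $\gamma$ from $x$ to $y$. Its existence is not completely formal because $X^\epsilon$ is noncomplete, but on every closed ball $\bar{B}_d(p,\varrho)$ the weight $\rho_\epsilon$ is bounded between $e^{-\epsilon\varrho}$ and $1$, so the identity is a bi-Lipschitz homeomorphism from $(\bar{B}_d(p,\varrho),d)$ onto its image in $(X^\epsilon,d_\epsilon)$; since $(X,d)$ is proper this image is $d_\epsilon$-compact, and the induced length metric on it is geodesic by Hopf--Rinow. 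Together with the uniformization of \cite{BHK} (Remark~\ref{epsilon constraint}) this provides a $d_\epsilon$-geodesic $\gamma$ between $x$ and $y$, and the bound obtained below shows a posteriori that $\gamma$ does not reach the bounding sphere, hence is genuinely length-minimizing in $X^\epsilon$.

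The soft part is an elementary observation. Let $\sigma$ be the $d$-geodesic from $x$ to $y$. For any $z\in\sigma$,
\[
d(p,z)\ \le\ d(p,x)+d(x,z)\ \le\ R+l_d(\sigma)\ =\ R+d(x,y)\ \le\ R+\big(d(p,x)+d(p,y)\big)\ \le\ 3R,
\]
so $\sigma\subseteq\bar{B}_d(p,3R)$. This is the entire origin of the coefficient $3$ in the statement, and it uses nothing beyond $x,y\in\bar{B}_d(p,R)$ and the triangle inequality.

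The hyperbolic part, which is where the explicit constant $L=6(1+24\delta)(2+40\delta)+48\delta+3$ enters, is the assertion that $\gamma$ stays $d$-close to $\sigma$. The mechanism I would use is the comparison between the two metrics from \cite{BHK}: for $0<\epsilon\le\epsilon_0(\delta)$ a $d_\epsilon$-geodesic, read as a curve in $(X,d)$, is a $(\lambda,c)$-quasi-isometric path with $\lambda,c$ depending only on $\delta$. Feeding these quasigeodesic parameters into the geodesic stability (Morse) theorem for $\delta$-Gromov hyperbolic spaces, \cite[Theorem 3.7]{Jussi}, bounds the $d$-Hausdorff distance between $\gamma$ and $\sigma$ by a constant depending only on $\delta$; tracking that constant through the BHK estimates is exactly what yields the value $L$. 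Thus
\[
d^{d}_{H}(\gamma,\sigma)\ \le\ L.
\]

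Combining the two parts, every point of $\gamma$ lies within $d$-distance $L$ of $\sigma\subseteq\bar{B}_d(p,3R)$, so $\gamma\subseteq\big(\bar{B}_d(p,3R)\big)_{L}\subseteq\bar{B}_d(p,3R+L)$, which is the claim. The genuinely hard step is the hyperbolic part: one must (i) know from the quantitative uniformization of \cite{BHK} that $d_\epsilon$-geodesics are $d$-quasigeodesics with constants independent of $\epsilon$ and of the chosen points, and (ii) carry the resulting stability constant through explicitly to recover the stated $L$. By contrast the existence and interiority of $\gamma$ and the $3R$ bound are routine; it is precisely because the additive constant comes from Morse stability, and not from a length comparison in $d_\epsilon$ itself, that it is uniform in $R$ and in $\epsilon\le\epsilon_0(\delta)$.
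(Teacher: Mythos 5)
Your overall decomposition (the $3R$ from a triangle-inequality bound on the $d$-geodesic $[x,y]$, plus an additive $L$ from closeness of the $d_\epsilon$-geodesic to $[x,y]$) is exactly the skeleton of the paper's proof, and your ``soft part'' coincides with the paper's observation that $[x,y]\subseteq \bar{B}_d(p,3R)$. The genuine gap is in your existence step, and it is circular. The Hopf--Rinow argument on $\bar{B}_d(p,\varrho)$ produces a curve that is geodesic only for the \emph{induced} length metric of that ball, i.e.\ minimizing among curves that stay in the ball; it is not a priori a $d_\epsilon$-geodesic of $X^\epsilon$, because competitors for $d_\epsilon$ may leave the ball --- and in the weighted metric they can do so cheaply: an excursion beyond radius $\varrho'$ and back costs at most $2e^{-\epsilon\varrho'}/\epsilon$, which stays bounded as $\varrho'\to\infty$, so there is no soft reason that minimizing sequences remain in any fixed ball. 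You propose to close this loop ``a posteriori'' via the stability bound, but that bound is available only for curves already known to be genuine $d_\epsilon$-geodesics (or at least $d$-quasigeodesics with uniform constants, or curves with the local quasiconvexity property of \cite[Lemma 5.7]{BHK}); your ball-geodesic has no such property near the bounding sphere, so the containment estimate presupposes exactly what it is meant to establish. This is why the paper proves existence and containment simultaneously: it takes near-minimizers $\tilde{\gamma}_k$ with $l_{d_\epsilon}(\tilde{\gamma}_k)\le d_\epsilon(x,y)+1/k$, replaces them by curves $\hat{\gamma}_k$ having the local quasiconvexity property via \cite[Lemma 5.7]{BHK} (near-minimizers themselves need not be quasigeodesics --- they can zigzag at small scales), applies \cite[Lemma 5.21]{BHK} to get $\hat{\gamma}_k\subseteq([x,y])_{L}\subseteq\bar{B}_d(p,3R+L)$ for all $k$, and only then extracts a uniform limit by Arzel\`a--Ascoli and checks by lower semicontinuity of $d_\epsilon$-length that the limit curve is a $d_\epsilon$-geodesic.

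A secondary imprecision: the constant $L$ is not obtained by feeding quasigeodesic parameters into the geodesic stability theorem \cite[Theorem 3.7]{Jussi}; it is the constant of \cite[Lemma 5.21]{BHK} (a Gehring--Osgood type theorem for curves with the local quasiconvexity property), and the hypothesis $\epsilon\le\frac{1}{14L}$ is tuned to that lemma. Your input (i) --- that genuine $d_\epsilon$-geodesics are $d$-quasigeodesics with constants depending only on $\delta$ --- is in fact provable: a subarc of a $d_\epsilon$-geodesic is itself minimizing, and a Harnack-type comparison of the weight $e^{-\epsilon d(p,\cdot)}$ along the subarc with the weight along the $d$-geodesic between its endpoints yields precisely the local quasiconvexity hypothesis of \cite[Lemma 5.21]{BHK}. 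But this is not a quotable statement in \cite{BHK}, you would have to prove it, and even then it only helps once a genuine $d_\epsilon$-geodesic is known to exist --- which brings you back to the gap above.
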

 \begin{rmk}
 	Notice that whether the uniformized space $(X^{\epsilon}, d_{\epsilon})$ is geodesic or not is nontrivial since we do not have access to the Hopf-Rinow theorem. On the other hand by Definition \ref{uniformaization}, it is obvious that $(X^{\epsilon}, d_{\epsilon})$ is a length space. What this proposition tells us is that for $0<\epsilon \leq \epsilon_{0}(\delta)$, $(X^{\epsilon}, d_{\epsilon})$ is a geodesic metric space with geodesic curves that do not wander too far.
 \end{rmk}
 \begin{proof}
 	 	Given $x, y \in \bar{B}_d(p, R)$, we first construct a geodesic curve with respect to $d_{\epsilon}$. Take a curve $\tilde{\gamma}_k$ from $x$ to $y$ satisfying
 	 	\[
 	 	l_{d_{\epsilon}}(\tilde{\gamma}_k)\leq d_{\epsilon}(x, y)+\frac{1}{k}
 	 	\]
 	 	for each $k \in \mathbb{N}$. By \cite[Lemma 5.7]{BHK}, we can take another curve $\hat{\gamma}_k$ with the same endpoints as $\tilde{\gamma}_k$ with $l_{d_{\epsilon}}(\hat{\gamma}_k)\leq l_{d_{\epsilon}}(\tilde{\gamma}_k)$ and
 	 	\[
 	 	l_{d}(\hat{\gamma}_k|_{(s, t)}) \leq 3d(\hat{\gamma}_k(s),\hat{\gamma}_k(t))+1
 	 	\]
 	 	whenever $0 \leq s \leq t \leq l_{d}(\hat{\gamma}_k)$ are such that $d(\hat{\gamma}_k(s), \hat{\gamma}_k(t))\leq \frac{1}{12\epsilon}$.
 	 	Since the curves $\hat{\gamma}_k$ satisfy the assumptions of \cite[Lemma 5.21]{BHK} by the choice of $\epsilon_{0}(\delta)$, the curve $\hat{\gamma}_k$ belongs to the $L$-neighborhood of each geodesic $[x, y]$ with respect to the original metric $d$. Since $[x, y] \subseteq \bar{B}_d(p, 3R)$, we conclude that $\hat{\gamma}_k \subseteq ([x, y])_{L} \subseteq \bar{B}_d(p, 3R+L)$. Since the diameter of the uniformized space $X^{\epsilon}$ is at most $\frac{2}{\epsilon}$, we get
\[
2+\frac{2}{\epsilon}\geq 1+1/k+l_d(\tilde{\gamma}_{k})\geq \int_{0}^{l_{d}(\hat{\gamma}_k)}e^{-\epsilon d(p, \hat{\gamma}_k(t))}\, dt \geq e^{-\epsilon (3R+L)}l_{d}(\hat{\gamma}_k).
\] 	
Therefore,
\begin{equation}\label{(2)}
 l_{d}(\hat{\gamma}_k)\leq \Big(2+\frac{2}{\epsilon}\Big)e^{\epsilon (3R+L)}=:M
\end{equation} 
for every $k \in \mathbb{N}$. Set $\alpha_k(t) := l_d(\hat{\gamma}_k)t$ for each $t \in [0, 1]$ and consider $\gamma_k := \hat{\gamma}_k \circ \alpha_k : [0, 1] \to X$. Then we have, for each $t, \tilde{t} \in [0, 1]$ with $t \leq \tilde{t}$,
\[
	d(\gamma_k(t), \gamma_k(\tilde{t})) \leq l_d(\gamma_k|_{[t, \tilde{t}]})\\
	\leq l_d(\hat{\gamma}_k|_{[\alpha_k(t), \alpha_k(\tilde{t})]})\\
	\leq l_d(\hat{\gamma}_k)|\tilde{t}-t|\\
	\leq M|\tilde{t}-t|,
\]
which implies that $(\gamma_k)_k \subseteq C([0, 1] : X)$ is equicontinuous. Hence by the Arzela-Ascoli theorem, we have a curve $\gamma : [0, 1] \to X$ and a subsequence $\gamma_{k}$ (still denoted by $\gamma_k$) such that $\gamma_k \to \gamma$ uniformly with respect to the metric $d$. Since the identity map $Id : (X, d) \to (X, d_{\epsilon})$ is homeomorphic by \cite[Appendix]{BHK},  for every partition $t_{1}=0< \cdots < t_{N}=1$, 
\[
	\sum\limits_{i = 1}^{N-1} d_{\epsilon}(\gamma(t_{i}, \gamma(t_{i+1}))) \leq \lim\limits_{k \to \infty}\sum\limits_{i=1}^{N-1}d_{\epsilon}(\gamma_k(t_i), \gamma_k(t_{i+1}))\\
	\leq \lim\limits_{k \to \infty}l_{d_\epsilon}(\gamma_n)\\
	\leq d_{\epsilon}(x, y),
\]
 which implies that the curve $\gamma$ is geodesic with respect to $d_{\epsilon}$ which stays in $[x, y]_{L}$. This completes the proof.
 \end{proof}

 \begin{lemma}\label{key lemma of ball conv 1}
 	Let $(X, d, p)$ be a pointed proper geodesic $\delta$-Gromov hyperbolic space and $R \geq 1$, $0<\epsilon\leq \epsilon_0(\delta)$, and $\delta'>0$ be given. Set $T:=4+L$ where $L$ is the constant in Lemma \ref{bhk}. For every pair of points  $x, y \in \bar{B}_d(p, R)$ with $d(x, y)\geq \delta'/2$, let $\gamma \subseteq B_d(p, TR)$ be a geodesic curve from $x$ to $y$ with respect to $d_{\epsilon}$ taken from Lemma \ref{bhk}. Then  we can take subcurves $(\gamma_i)_{i=1}^{N}$ of $\gamma$ such that
 	\[
 	 \gamma=\sum_{i=1}^{N}\gamma_{i}, \ \ \ \delta'/2 \leq l_d(\gamma_i) < \delta' \ \ \  \text{and} \ \ \ N \leq \frac{4}{\epsilon \delta'}e^{\epsilon T R}.
 	 \]
 \end{lemma}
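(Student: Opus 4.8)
The plan is to control the $d$-length of the $d_\epsilon$-geodesic $\gamma$ and then cut it into consecutive pieces of prescribed $d$-length, choosing the number of pieces $N$ so that each piece lands in $[\delta'/2, \delta')$.

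First I would bound $l_d(\gamma)$ from above. Parametrize $\gamma$ by $d$-arclength on $[0, \ell]$ with $\ell := l_d(\gamma)$, as per the standing convention of this section. Since $\gamma \subseteq B_d(p, TR)$, every point of $\gamma$ satisfies $d(p, \gamma(t)) \le TR$, so $e^{-\epsilon d(p,\gamma(t))} \ge e^{-\epsilon TR}$. Because $\gamma$ is a $d_\epsilon$-geodesic from $x$ to $y$,
\[
d_\epsilon(x,y) = l_{d_\epsilon}(\gamma) = \int_0^{\ell} e^{-\epsilon d(p, \gamma(t))}\, dt \ge e^{-\epsilon TR}\,\ell.
\]
Using the bound $\diam(X^\epsilon) \le 2/\epsilon$ recorded in the proof of Lemma~\ref{bhk}, we have $d_\epsilon(x,y) \le 2/\epsilon$, and hence
\[
\ell = l_d(\gamma) \le e^{\epsilon TR}\, d_\epsilon(x,y) \le \frac{2}{\epsilon}e^{\epsilon TR}.
\]
On the other hand $\ell \ge d(x,y) \ge \delta'/2$ by the hypothesis on $x$ and $y$; this lower bound is exactly what makes the decomposition below possible.

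Next I would carry out the decomposition. Set $N := \lfloor 2\ell/\delta'\rfloor$; since $\ell \ge \delta'/2$ we have $N \ge 1$. Partition $[0, \ell]$ into $N$ intervals of equal length via $t_i := i\ell/N$ for $0 \le i \le N$, and set $\gamma_i := \gamma|_{[t_{i-1}, t_i]}$, so that $\gamma = \sum_{i=1}^{N}\gamma_i$ and $l_d(\gamma_i) = \ell/N$ for each $i$. The definition of $N$ gives $N\delta'/2 \le \ell < (N+1)\delta'/2 \le N\delta'$, where the last inequality uses $N \ge 1$; dividing through by $N$ yields $\delta'/2 \le \ell/N < \delta'$, so every subcurve meets the required length constraint. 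Finally $N \le 2\ell/\delta' \le \frac{2}{\delta'}\cdot\frac{2}{\epsilon}e^{\epsilon TR} = \frac{4}{\epsilon\delta'}e^{\epsilon TR}$, which is the claimed bound.

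The only genuinely nontrivial input is the upper bound on $l_d(\gamma)$, and within it the two facts that $\gamma$ stays in $B_d(p, TR)$ (so $e^{-\epsilon d(p,\cdot)}$ is bounded below along $\gamma$) and that $\diam(X^\epsilon) \le 2/\epsilon$; both are already available from Lemma~\ref{bhk} and its proof. The remaining, purely combinatorial, step—verifying that equal pieces of length $\ell/N$ all lie in $[\delta'/2,\delta')$—is elementary once the inequality $\ell < N\delta'$ is observed, and the hypothesis $d(x,y)\ge\delta'/2$ enters precisely to guarantee $\ell \ge \delta'/2$, hence $N \ge 1$, so that at least one admissible piece exists.
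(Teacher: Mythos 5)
Your proof is correct and follows essentially the same route as the paper: bound $l_d(\gamma) \leq \frac{2}{\epsilon}e^{\epsilon TR}$ by combining $\gamma \subseteq B_d(p,TR)$ with $\diam(X^\epsilon)\leq 2/\epsilon$, then split $\gamma$ into pieces of $d$-length in $[\delta'/2,\delta')$ and count. The only difference is cosmetic: the paper simply asserts such a splitting exists, whereas you construct it explicitly with $N=\lfloor 2\ell/\delta'\rfloor$ equal pieces and verify $N\geq 1$ from the hypothesis $d(x,y)\geq \delta'/2$, which is a welcome clarification rather than a departure.
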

 \begin{proof}
 	 By Lemma~\ref{bhk}, for every pair of points $x, y \in \bar{B}_d(p, R)$, we can always find a geodesic curve $\gamma$ with respect to $d_{\epsilon}$ such that $\gamma \subseteq B_d(p, TR)$. Since the diameter of the uniformized space $X^{\epsilon}$ is at most $\frac{2}{\epsilon}$, we get
\[
\frac{2}{\epsilon}\geq \int_{0}^{l_{d}(\gamma)}e^{-\epsilon d(p, \gamma_n(t))}\, dt \geq e^{-\epsilon T R}l_{d}(\gamma).
\] 	
Therefore, we have
\begin{equation}\label{(2)}
 l_{d}(\gamma)\leq \frac{2e^{\epsilon T R}}{\epsilon}.
\end{equation}
Now split this curve $\gamma$ into $N$ subcurves $(\gamma_{i})_{i=1}^{N}$ so that $\delta'/2 \leq t_{i}:=l(\gamma_{i})<\delta'$ for $1\leq i \leq N$. Since 
\[
\delta'/2\times N \leq \sum\limits_{i=1}^{N} l_{d}(\gamma_{i})= l_{d}(\gamma)  \leq  \frac{2e^{\epsilon T R}}{\epsilon},
\]  
 we have $N \leq \frac{4}{\epsilon \delta'}e^{\epsilon T R}$.
 \end{proof}
 
 \begin{rmk}\label{choice of delta'}
 	In order to prove the next lemma, we note that for each $\epsilon>0$, the function $t \mapsto e^{-\epsilon t}$ is uniformly continuous on $[0, \infty)$, i.e., for any $\tilde{\epsilon}>0$, there exists $0<\delta' \leq \tilde{\epsilon}\wedge 1$ such that
 	\[
 	|e^{-\epsilon t}-e^{-\epsilon t'}|<\tilde{\epsilon}
 	\]
 	holds for every $t, t' \in [0, \infty)$ with $|t-t'| \leq 4\delta'$. This choice of $\delta'$ plays an important role in proving the convergence of uniformized spaces.
 \end{rmk}

 \begin{lemma}\label{key lemma of ball conv 2}
 	Let $(X, d, p)$ be a pointed proper geodesic $\delta$-Gromov hyperbolic space, $R \geq 1$ and $0<\epsilon\leq \epsilon_0(\delta)$. Let $T=4+L$ be the constant from Lemma~\ref{key lemma of ball conv 1}. For any $\tilde{\epsilon}>0$, let $\delta'>0$ be as in Remark~\ref{choice of delta'} corresponding to $\tilde{\epsilon}>0$.  Then for every $x, y \in \bar{B}_d(p, R)$ with $d(x, y)\geq \delta'/2$, we have
 	\begin{equation}\label{(3)}
 	d_{\epsilon}(x, y) = \int_{0}^{l_{d}(\gamma)}e^{-\epsilon d(p, \gamma(t))}\, dt \geq \sum\limits_{i=1}^{N}e^{-\epsilon d(p, \gamma_{i}(t_i))}l_{d}(\gamma_{i})-\frac{2\tilde{\epsilon}}{\epsilon}e^{\epsilon T R}\notag
 		\end{equation}
 	where $\gamma \subseteq B_d(p, TR)$ and subcurves $(\gamma_i)_{i=1}^{N}$ of $\gamma$ are as in the statement of Lemma~\ref{key lemma of ball conv 1} corresponding to the constant $\delta'$ and $t_i:=l_d(\gamma_i)$ for $i=1, \cdots, N$.
 \end{lemma}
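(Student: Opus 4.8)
The plan is to begin with the displayed equality, which is essentially free: by Lemma~\ref{bhk} the curve $\gamma$ is a geodesic with respect to $d_{\epsilon}$ joining $x$ to $y$, and by Definition~\ref{uniformaization} the $d_{\epsilon}$-length of any $d$-arclength parametrized curve is exactly $\int_{0}^{l_{d}(\gamma)} e^{-\epsilon d(p, \gamma(t))}\, dt$. Since $\gamma$ is $d_{\epsilon}$-geodesic, this integral equals $l_{d_{\epsilon}}(\gamma) = d_{\epsilon}(x, y)$, giving the first equality. It then remains to bound the integral from below by the indicated Riemann-type sum with error at most $\frac{2\tilde{\epsilon}}{\epsilon} e^{\epsilon T R}$.

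The next step is to split the integral along the decomposition $\gamma = \sum_{i=1}^{N} \gamma_i$ supplied by Lemma~\ref{key lemma of ball conv 1}, writing $\int_{0}^{l_{d}(\gamma)} e^{-\epsilon d(p, \gamma(t))}\, dt = \sum_{i=1}^{N} \int_{0}^{t_i} e^{-\epsilon d(p, \gamma_i(s))}\, ds$, where $t_i := l_d(\gamma_i)$ satisfies $\delta'/2 \leq t_i < \delta'$. On each subcurve I would compare the integrand at a running point $\gamma_i(s)$ with its value at the endpoint $\gamma_i(t_i)$. The triangle inequality together with the $d$-arclength parametrization yields $|d(p, \gamma_i(s)) - d(p, \gamma_i(t_i))| \leq l_d(\gamma_i|_{[s, t_i]}) \leq t_i < \delta' \leq 4\delta'$ for every $s \in [0, t_i]$, so the uniform-continuity choice of $\delta'$ recorded in Remark~\ref{choice of delta'} gives $|e^{-\epsilon d(p, \gamma_i(s))} - e^{-\epsilon d(p, \gamma_i(t_i))}| < \tilde{\epsilon}$ pointwise.

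Integrating this pointwise estimate over $[0, t_i]$ produces $\big|\int_{0}^{t_i} e^{-\epsilon d(p, \gamma_i(s))}\, ds - e^{-\epsilon d(p, \gamma_i(t_i))}\, l_d(\gamma_i)\big| \leq \tilde{\epsilon}\, l_d(\gamma_i)$, and summing over $i$ collapses the total error to $\tilde{\epsilon} \sum_{i=1}^{N} l_d(\gamma_i) = \tilde{\epsilon}\, l_d(\gamma)$. The one point requiring care in the bookkeeping — and what I expect to be the only real obstacle to hitting the stated constant — is that this total error should be controlled by the length bound $l_d(\gamma) \leq \frac{2}{\epsilon} e^{\epsilon T R}$ established inside the proof of Lemma~\ref{key lemma of ball conv 1}, rather than by the cardinality estimate. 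Using $l_d(\gamma)$ directly gives the sharp bound $\tilde{\epsilon}\, l_d(\gamma) \leq \frac{2\tilde{\epsilon}}{\epsilon} e^{\epsilon T R}$, whereas the cruder route $N\delta' \leq \frac{4}{\epsilon} e^{\epsilon T R}$ would lose a factor of two.

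Apart from that, the argument is a quantitative Riemann-sum estimate with no genuinely delicate step; one need only note that the modulus window $|t - t'| \leq 4\delta'$ built into Remark~\ref{choice of delta'} comfortably covers the actual oscillation $< \delta'$ of the function $d(p, \cdot)$ along each $\gamma_i$, so that the pointwise comparison applies uniformly and the error sum telescopes as above to yield $d_{\epsilon}(x, y) \geq \sum_{i=1}^{N} e^{-\epsilon d(p, \gamma_i(t_i))}\, l_d(\gamma_i) - \frac{2\tilde{\epsilon}}{\epsilon} e^{\epsilon T R}$.
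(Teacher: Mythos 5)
Your proposal is correct and follows essentially the same route as the paper: split the integral over the subcurves from Lemma~\ref{key lemma of ball conv 1}, apply the uniform continuity of $t \mapsto e^{-\epsilon t}$ from Remark~\ref{choice of delta'} pointwise on each piece (the oscillation of $d(p,\cdot)$ along $\gamma_i$ being at most $l_d(\gamma_i) < \delta'$), and control the accumulated error $\tilde{\epsilon}\, l_d(\gamma)$ by the length bound $l_d(\gamma) \leq \frac{2}{\epsilon}e^{\epsilon TR}$ established in the proof of Lemma~\ref{key lemma of ball conv 1}. Your observation about using $l_d(\gamma)$ rather than the cardinality estimate $N\delta'$ to hit the stated constant matches exactly what the paper does.
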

 \begin{proof}
 	 By Lemma~\ref{key lemma of ball conv 1}, there exist a geodesic curve $\gamma \subseteq B_d(p, TR)$ from $x$ to $y$ with respect to $d_{\epsilon}$  and subcurves $(\gamma_i)_{i=1}^{N}$ of $\gamma$ such that
 	\begin{equation}
 	 \delta'/2 \leq l_d(\gamma_i) < \delta' \ \ \  \text{and} \ \ \ N \leq \frac{4}{\epsilon \delta'}e^{\epsilon T R}.\notag
 	 \end{equation}
 	 Then we have, by \eqref{(2)} and the uniform continuity of $e^{-\epsilon t}$,
 		\begin{align}\label{a}
 		\int_{0}^{l_{d}(\gamma)}e^{-\epsilon d(p, \gamma(t))}\, dt &= \sum\limits_{i=1}^{N}\int_{0}^{l_{d}(\gamma_{i})}e^{-\epsilon d(p, \gamma_{i}(t))} \, dt\notag \\
 		&\geq \sum\limits_{i=1}^{N}\int_{0}^{l_{d}(\gamma_{i})}(e^{-\epsilon d(p, \gamma_{i}(t_i))}-\tilde{\epsilon}) \, dt\notag \\
 		&\geq \sum\limits_{i=1}^{N}e^{-\epsilon d(p, \gamma_{i}(t_i))}l_{d}(\gamma_{i})-\tilde{\epsilon} l_{d}(\gamma)\notag \\
 		&\geq \sum\limits_{i=1}^{N}e^{-\epsilon d(p, \gamma_{i}(t_i))}l_{d}(\gamma_{i})-\frac{2\tilde{\epsilon}}{\epsilon}e^{\epsilon T R}. \notag
 	\end{align}
 	This completes the proof.
 \end{proof}

 \begin{lemma}\label{key lemma of ball conv 3}
 		Let $(X_n, d_n, p_n)_n$ and $(X, d, p)$ be pointed proper geodesic $\delta$-Gromov hyperbolic spaces. Suppose that $(X_n, d_n, p_n)_n$ is pointed Gromov-Hausdorff convergent to $(X, d, p)$. Fix $0<\epsilon\leq \epsilon_0(\delta)$ and let $T:=3+L$ be the constant from Lemma~\ref{key lemma of ball conv 1}. Let $R\geq 1$ be given. For any $\tilde{\epsilon}>0$, let $\delta'>0$ be the constant given in Remark~\ref{choice of delta'}. Then there exists $N:=N_{T, R, \tilde{\epsilon}} \in \mathbb{N}$ such that for $n \geq N$, we have
 		\[
  |d_{n, \epsilon}(x_n, y_n)-d_{\epsilon}(f_{n}^{\delta'^2}(x_n), f_{n}^{\delta'^2}(y_n))| \leq S(\epsilon, T, R)\tilde{\epsilon}
  \]
for every pair of points $x_n, y_n \in \bar{B}_{d_n}(p_n, R)$ where $f_{n}^{\delta'^2} : B_{d_n}(p_n, TR) \to X$ is a map as in Definition~\ref{p-GH} and $S(\epsilon, T, R)\tilde{\epsilon} \to 0$ as $\tilde{\epsilon} \to 0$.
 	\end{lemma}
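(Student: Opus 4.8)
The plan is to prove the estimate by establishing the two one-sided inequalities
\[
d_{\epsilon}(f(x_n), f(y_n)) \le d_{n,\epsilon}(x_n, y_n) + S\tilde{\epsilon}
\qquad\text{and}\qquad
d_{n,\epsilon}(x_n, y_n) \le d_{\epsilon}(f(x_n), f(y_n)) + S\tilde{\epsilon},
\]
where I write $f := f_n^{\delta'^2}$. Each is obtained by transporting a \emph{decomposed} geodesic (produced by Lemma~\ref{bhk} and split by Lemma~\ref{key lemma of ball conv 1}) from one space to the other and comparing the associated weighted Riemann sums through Lemma~\ref{key lemma of ball conv 2}. Before this I would dispose of the degenerate case $d_n(x_n, y_n) < \delta'/2$ separately: there $d_{n,\epsilon}(x_n, y_n) \le d_n(x_n, y_n) < \delta'/2$ and $d_{\epsilon}(f(x_n), f(y_n)) \le d(f(x_n), f(y_n)) \le d_n(x_n, y_n) + \delta'^2 < \delta'$, so both sides are of order $\tilde{\epsilon}$ and the claimed inequality holds trivially. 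Thus I may assume $d_n(x_n, y_n) \ge \delta'/2$, which is exactly the hypothesis needed to apply Lemmas~\ref{key lemma of ball conv 1} and~\ref{key lemma of ball conv 2}.

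For the inequality $d_{n,\epsilon} \le d_{\epsilon} + S\tilde{\epsilon}$ I would start from a $d_{\epsilon}$-geodesic $\gamma$ in $X$ joining $f(x_n)$ to $f(y_n)$; by Lemma~\ref{bhk} it stays in $B_d(p, TR)$, and Lemma~\ref{key lemma of ball conv 1} splits it into $N \le \tfrac{4}{\epsilon\delta'}e^{\epsilon TR}$ subcurves $\gamma_i$ of $d$-length in $[\delta'/2, \delta')$ with partition points $z_0 = f(x_n), \dots, z_N = f(y_n)$. Using property (3) of Definition~\ref{p-GH} (enlarging the domain ball of $f$ slightly if necessary so that each $z_i$ admits a preimage) I pick $w_i \in B_{d_n}(p_n, TR)$ with $d(f(w_i), z_i) \le \delta'^2$, taking $w_0 = x_n$ and $w_N = y_n$; concatenating $d_n$-geodesics $\sigma_i$ from $w_{i-1}$ to $w_i$ gives a competitor curve, whence $d_{n,\epsilon}(x_n, y_n) \le \sum_i l_{d_{n,\epsilon}}(\sigma_i)$. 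Property (2) of Definition~\ref{p-GH} forces $l_{d_n}(\sigma_i) = d_n(w_{i-1}, w_i) \le l_d(\gamma_i) + 3\delta'^2 < 4\delta'$, so by the choice of $\delta'$ in Remark~\ref{choice of delta'} the weight $e^{-\epsilon d_n(p_n, \cdot)}$ varies by at most $\tilde{\epsilon}$ along each $\sigma_i$, and since $|d_n(p_n, w_{i-1}) - d(p, z_{i-1})| \le 2\delta'^2 \le 4\delta'$ it is within $\tilde{\epsilon}$ of $e^{-\epsilon d(p, z_{i-1})}$. Summing the resulting bound $l_{d_{n,\epsilon}}(\sigma_i) \le (e^{-\epsilon d(p, z_{i-1})} + 2\tilde{\epsilon})(l_d(\gamma_i) + 3\delta'^2)$ and controlling the four excess contributions by $N$, by $l_d(\gamma) \le \tfrac{2}{\epsilon}e^{\epsilon TR}$ from inequality (2) in the proof of Lemma~\ref{key lemma of ball conv 1}, and by $\delta' \le \tilde{\epsilon}\wedge 1$, all error terms are absorbed into a constant multiple of $\tfrac{\tilde{\epsilon}}{\epsilon}e^{\epsilon TR}$; the main term $\sum_i e^{-\epsilon d(p, z_{i-1})} l_d(\gamma_i)$ is at most $d_{\epsilon}(f(x_n), f(y_n)) + \tfrac{2\tilde{\epsilon}}{\epsilon}e^{\epsilon TR}$ by the upper-bound version of Lemma~\ref{key lemma of ball conv 2} (obtained by replacing $-\tilde{\epsilon}$ with $+\tilde{\epsilon}$ in its proof, and sampling at left endpoints).

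The reverse inequality $d_{\epsilon}(f(x_n), f(y_n)) \le d_{n,\epsilon}(x_n, y_n) + S\tilde{\epsilon}$ is proved symmetrically, now starting from a $d_{n,\epsilon}$-geodesic $\beta$ in $X_n$, which lies in $B_{d_n}(p_n, TR)$ by Lemma~\ref{bhk} applied to $X_n$ (here $3R + L \le (3+L)R = TR$ precisely because $R \ge 1$); I decompose $\beta$, push its partition points forward by $f$, join the images by $d$-geodesics in $X$, and run the same weighted-length estimate with the roles of the two spaces exchanged. In this direction no quasi-inverse is needed, since $f$ itself moves the relevant points, and the errors are again bounded by a constant multiple of $\tfrac{\tilde{\epsilon}}{\epsilon}e^{\epsilon TR}$ using the lower-bound form of Lemma~\ref{key lemma of ball conv 2} on $X_n$. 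Setting $S(\epsilon, T, R) := C\epsilon^{-1}e^{\epsilon TR}$ for a suitable absolute constant $C$ then yields the statement, and $S(\epsilon, T, R)\tilde{\epsilon} \to 0$ as $\tilde{\epsilon} \to 0$ since $S$ is independent of $\tilde{\epsilon}$; the threshold $N = N_{T,R,\tilde{\epsilon}}$ is simply the index beyond which the map $f_n^{\delta'^2}$ of Definition~\ref{p-GH} exists on $B_{d_n}(p_n, TR)$ with parameter $\delta'^2$.

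The step I expect to be the main obstacle is the curve-transfer itself: converting a decomposed geodesic of one space into an admissible competitor in the other while keeping \emph{every} partition point and sample point inside the ball on which the Gromov--Hausdorff estimates (2) and (3) of Definition~\ref{p-GH} remain valid. This is where the precise value of $T$ enters — one must verify that the geodesic furnished by Lemma~\ref{bhk} genuinely stays in $B_d(p, TR)$ and that property (3) then supplies the preimages $w_i$, which forces the minor enlargement of the domain ball of $f$ in the pull-back direction (the slack between $3+L$ and the $4+L$ appearing in the earlier lemmas is exactly what accommodates the $\delta'^2$ losses at the endpoints when $R = 1$). Once this geometric control is in place, the remaining work is the (routine but lengthy) bookkeeping of the additive $\tilde{\epsilon}$- and $\delta'^2$-errors across the $N$ pieces.
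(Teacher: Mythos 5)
Your proposal is correct and takes essentially the same route as the paper: both one-sided inequalities are obtained by decomposing a uniformization geodesic via Lemma~\ref{bhk} and Lemma~\ref{key lemma of ball conv 1}, transporting the partition points with the map from Definition~\ref{p-GH}, rejoining them by geodesics in the target space, and comparing the weighted Riemann sums through Lemma~\ref{key lemma of ball conv 2}, with all errors absorbed by the bounds on $N$ and on $l_d(\gamma)$. The only difference is one of emphasis: you spell out the pull-back direction, which the paper compresses into ``doing the same argument for $\gamma$ instead of $\gamma_n$,'' and in doing so you correctly flag the two points the paper glosses over, namely that property (3) of Definition~\ref{p-GH} must supply the approximate preimages $w_i$ and that the slack in $T$ is what makes this legitimate (the statement's $T:=3+L$ is indeed an inconsistency with the $T:=4+L$ of Lemma~\ref{key lemma of ball conv 1}).
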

 	\begin{proof}
 		Let $\tilde{\epsilon}>0$ and $R \geq 1$ be given. We also fix the constant $\delta'>0$ in Remark~\ref{choice of delta'}.  Since $(X_n, d_n, p_n)_n$ pointed Gromov-Hausdorff converges to $(X, d, p)$, there exists $N:=N_{T, R, \tilde{\epsilon}} \in \mathbb{N}$ such that for $n \geq N$, there exists a map $f_{n}^{\delta'^2} : B_{d_n}(p_n, TR) \to X$ satisfying the properties in Definition~\ref{p-GH}. We may assume that $f_{n}^{\delta'^2}(\bar{B}_{d_n}(p_n, R))\subseteq \bar{B}_d(p, R)$ since $X_n$ are all geodesic, see Remark~\ref{p-GH def remark}.
 		For given $x_n, y_n \in \bar{B}_{d_n}(p_n, R)$, first note that if $d_n(x_n, y_n)< \delta'/2$, then it is always true that 
 		\[
 		|d_{n, \epsilon}(x_n, y_n)-d_{\epsilon}(f_{n}^{\delta'^2}(x_n), f_{n}^{\delta'^2}(y_n))| \leq 3\tilde{\epsilon}
 		\] 
 		since $d_{\epsilon}(x, y) \leq d(x, y)$ for all $x, y \in X$ and $d_{n, \epsilon}(x_n, y_n) \leq d_n(x_n, y_n)$ for all $x_n, y_n \in X_n$. Hence we may assume that $d_n(x_n, y_n)\geq \delta'/2$. By Lemma \ref{key lemma of ball conv 2}, we can find a geodesic curve $\gamma_n \subseteq B_{d_n}(p_n, TR)$ with respect to $d_{n, \epsilon}$, and subcurves $(\gamma_{n, i})_{i=1}^{N}$ of $\gamma_n$ such that  
 	\begin{equation}\label{(3)}
 		d_{n, \epsilon}(x_n, y_n) = \int_{0}^{l_{d_n}(\gamma_n)}e^{-\epsilon d_n(p_n, \gamma_n(t))}\, dt \geq \sum\limits_{i=1}^{N}e^{-\epsilon d_n(p_n, \gamma_{n, i}(t_i))}l_{d_n}(\gamma_{n, i})-\frac{2\tilde{\epsilon}}{\epsilon}e^{\epsilon T R},
 	\end{equation}
 	where $t_i:=l_{d_n}(\gamma_{n, i})$ for $i=1, \cdots, N$.
 	Note that by the property of the map $f_{n}^{\delta'^2}$ in Definition \ref{p-GH}, for $(f_{n}^{\delta'^2}(\gamma_{n, i}(t_i)))_{i=1}^{N}$,
 	\[
 	\begin{split}
 		|d_n(\gamma_{n, i}(t_i), \gamma_{n, i+1}(t_{i+1}))- d(f_{n}^{\delta'^2}(\gamma_{n, i}(t_i)), f_{n}^{\delta'^2}(\gamma_{n, i+1}(t_{i+1})))|< \delta'^2
 	\end{split}
 	\]
 	and
 	\[
 	\begin{split}
 		|d(p, f_{n}^{\delta'^2}(\gamma_{n, i}(t_i)))- d_n(p_n, \gamma_{n, i}(t_i))| \leq \delta'^2
 	\end{split}
 	\]
 	 for $1 \leq i \leq N$. Let $\tilde{\gamma}_{1}$ be a geodesic curve  from $f_{n}^{\delta'^2}(x_n)$ to $f_{n}^{\delta'^2}(\gamma_{n, 1}(t_1))$ and $\tilde{\gamma}_{i}$ be a geodesic curve from $f_{n}^{\delta'^2}(\gamma_{n, i-1}(t_{i-1}))$ to $f_{n}^{\delta'^2}(\gamma_{n, i}(t_{i}))$ for $2 \leq i \leq N$ with respect to $d$.  Then for $2\leq i \leq N$, we get
 	\[
 		t_i':=l_{d}(\tilde{\gamma}_{i}) = d(f_{n}^{\delta'^2}(\gamma_{n, i-1}(t_{i-1})), f_{n}^{\delta'^2}(\gamma_{n, i}(t_i))) \leq d_n(\gamma_{n, i-1}(t_{i-1}), \gamma_{n, i}(t_{i}))
 		+\delta'^2 \leq 2\delta'.
 	\]
 	It is also clear that $t_i':=l_d(\tilde{\gamma}_1) \leq 2\delta'$. Hence, from \eqref{(3)} and the uniform continuity of $e^{-\epsilon t}$, we have
 		\begin{align}\label{b}
 		\sum\limits_{i=1}^{N}e^{-\epsilon d(\tilde{\gamma}_{i}(t_i'), p)}l_{d}(\tilde{\gamma}_{i}) 
 		&\leq \sum\limits_{i=1}^{N}(e^{-\epsilon d_n(p_n, \gamma_{n, i}(t_i))}+\tilde{\epsilon})l_{d}(\tilde{\gamma}_{i})\notag \\
 		&\leq \sum\limits_{i=1}^{N}(e^{-\epsilon d_n(p_n, \gamma_{n, i}(t_i))}+\tilde{\epsilon})(l_{d_n}(\gamma_{n, i})+\delta'^2)\notag \\
 		&\leq \sum\limits_{i=1}^{N}e^{-\epsilon d_n(p_n, \gamma_{n, i}(t_i))}l_{d_n}(\gamma_{n, i})+\sum\limits_{i=1}^{N}e^{-\epsilon d_n(p_n, \gamma_{n, i}(t_i))}\delta'^2\notag \\
 		&\ \ +\tilde{\epsilon} l_{d_n}(\gamma_{n})+N\tilde{\epsilon}\delta'^2 \notag \\
 		&\leq \int_{0}^{l_{d_n}(\gamma_n)}e^{-\epsilon d_n(p_n, \gamma_n(t))}\, dt +\frac{2\tilde{\epsilon}}{\epsilon}e^{\epsilon T R}+N \delta'^2\notag \\
 		&\ \ +\tilde{\epsilon} l_{d_n}(\gamma_{n})+N\tilde{\epsilon}\delta'^2 \notag \\
 		&\leq  \int_{0}^{l_{d_n}(\gamma_n)}e^{-\epsilon d_n(p_n, \gamma_n(t))}\, dt +\frac{2\tilde{\epsilon}}{\epsilon}e^{\epsilon T R}+\Big(\frac{4}{\epsilon \delta'}e^{\epsilon T R}\Big)\delta'^2 \notag \\
 		&\ \ +\frac{\tilde{\epsilon}}{\epsilon}e^{\epsilon T R} +\Big(\frac{4}{\epsilon \delta'}e^{\epsilon T R}\Big)\tilde{\epsilon}\delta'^2\notag \\
 		&= \int_{0}^{l_{d_n}(\gamma_n)}e^{-\epsilon d_n(p_n, \gamma_n(t))}\, dt+C(\epsilon, T, R)\tilde{\epsilon}
 		\end{align}
 	where 
 	\[
 	C(\epsilon, T, R):=\frac{11}{\epsilon}e^{\epsilon T R}.
 	\]
 	Also, noting that for each $t \in [0, l(\tilde{\gamma}_{i})]$ ($1\leq i \leq N$),
 	\[
 	\begin{split}
 	d(\tilde{\gamma}_{i}(t), \tilde{\gamma}_{i}(t_{i})) &\leq l_d(\tilde{\gamma}_{i}) \leq 2 \delta',\\
 	\end{split}
 	\]
 	we get 
 		\begin{align}\label{c}
 		\sum\limits_{i=1}^{N}e^{-\epsilon d(p, \tilde{\gamma}_{i}(t_i'))}l_d(\tilde{\gamma}_{i}) &\geq \sum\limits_{i=1}^{N}\int_{0}^{l_d(\tilde{\gamma}_{i})}(e^{-\epsilon d(p, \tilde{\gamma}_{i}(t))}-\tilde{\epsilon}) \, dt \notag \\
 		&\geq \sum\limits_{i=1}^{N}\int_{0}^{l_d(\tilde{\gamma}_{i})}e^{-\epsilon d(p, \tilde{\gamma}_{i}(t))}\, dt -(N\tilde{\epsilon})(2\delta') \notag \\
 		&\geq d_{\epsilon}(f_{n}^{\delta'^2}(x_n), f_{n}^{\delta'^2}(y_n))-2\tilde{\epsilon}\delta'\Big(\frac{4}{\epsilon \delta'}e^{\epsilon T R}\Big) \notag \\
 		&\geq d_{\epsilon}(f_{n}^{\delta'^2}(x_n), f_{n}^{\delta'^2}(y_n))-2\tilde{\epsilon}\Big(\frac{4}{\epsilon}e^{\epsilon T R}\Big)\notag \\
 		&\geq d_{\epsilon}(f_{n}^{\delta'^2}(x_n), f_{n}^{\delta'^2}(y_n))-K(\epsilon, T, R)\tilde{\epsilon}
 	\end{align}
  	where $K(\epsilon, T, R):=\frac{8}{\epsilon}e^{\epsilon T R}$ and we used the estimate for $N$ in Lemma \ref{key lemma of ball conv 1}.
  	Combining the inequalities \eqref{b} and \eqref{c} , we get  
  	\[
  	d_{\epsilon}(f_{n}^{\delta'^2}(x_n), f_{n}^{\delta'^2}(y_n))\leq d_{n, \epsilon}(x_n, y_n)+ K(\epsilon, T, R)\tilde{\epsilon} + C(\epsilon, T, R)\tilde{\epsilon}.
  	\]
Hence we conclude that
  	\[
  	 d_{\epsilon}(f_{n}^{\delta'^2}(x_n), f_{n}^{\delta'^2}(y_n))-d_{n, \epsilon}(x_n, y_n)\leq K(\epsilon, T, R)\tilde{\epsilon} + C(\epsilon, T, R)\tilde{\epsilon}.
  	\]

  	 Next by picking $x_{n}$ and $y_n$ from $\bar{B}_{d_n}(p_n, R)$ and a geodesic curve $\gamma$ between $f_{n}^{\delta'^2}(x_n)$ and $f_{n}^{\delta'^2}(y_n)$ with respect to the uniformization metric $d_{\epsilon}$, and doing the same argument for $\gamma$ instead of $\gamma_n$, we also have,
  	\[
  	d_{n, \epsilon}(x_n, y_n) - d_{\epsilon}(f_{n}^{\delta'^2}(x_n), f_{n}^{\delta'^2}(y_n))\leq \tilde{K}(\epsilon, T, R)\tilde{\epsilon}  + \tilde{C}(\epsilon, T, R)\tilde{\epsilon}.
  	\]
  	where
  	\[
  	\tilde{C}(\epsilon, T, R)\tilde{\epsilon}:= \frac{33}{\epsilon}e^{\epsilon T R}
  	\]
  	 and
  	  \[
  	  \tilde{K}(\epsilon, T, R)\tilde{\epsilon}:=\frac{24}{\epsilon}e^{\epsilon T R}.
  	\] 
    Therefore, we conclude that
  \[
  |d_{n, \epsilon}(x_n, y_n)-d_{\epsilon}(f_{n}^{\delta'^2}(x_n), f_{n}^{\delta'^2}(y_n))| \leq S(\epsilon, T, R)\tilde{\epsilon}
  \]
  for every $x_n, y_n \in \bar{B}_{d_n}(p_n, R)$ where
  \[
  S(\epsilon, T, R):=2(K(\epsilon, T, R)\vee \tilde{K}(\epsilon, T, R) \vee C(\epsilon, T, R) \vee \tilde{C}(\epsilon, T, R)).
  \]
 	\end{proof}

 \begin{prop}\label{ball-conv}
 	Let $(X_n, d_n, p_n)_n$ and $(X, d, p)$ be pointed proper geodesic $\delta$-Gromov hyperbolic spaces. Suppose that $(X_n, d_n, p_n)_n$ is pointed Gromov-Hausdorff convergent to $(X, d, p)$ and $R \geq 1$ and $0< \epsilon \leq \epsilon_{0}(\delta)$ are given. Then 
 	\[
 	\lim\limits_{n\to \infty}d_{GH}((\bar{B}_{d_n}(p_n, R), d_{n, \epsilon}), (\bar{B}_d(p, R), d_{\epsilon}))=0.
 	\]
 \end{prop}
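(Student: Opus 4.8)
The plan is to deduce the claim from the standard passage between $\epsilon$-isometries and the Gromov-Hausdorff distance (recorded in Section~2; see \cite[Proposition 7.4.11]{BBI}): for each small $\tilde{\epsilon}>0$ and all large $n$ I will produce an $\eta$-isometry $\Phi_n\colon(\bar{B}_{d_n}(p_n, R), d_{n, \epsilon})\to(\bar{B}_d(p, R), d_{\epsilon})$ with $\eta=\eta(\tilde{\epsilon})\to 0$ as $\tilde{\epsilon}\to 0$, which forces $d_{GH}\le 3\eta$. This is legitimate because all four spaces in question are compact: the ambient spaces are proper, so the closed balls $\bar{B}_{d_n}(p_n, R)$ and $\bar{B}_d(p, R)$ are compact for the original metrics, and since the identity maps $(X_n, d_n)\to(X_n, d_{n, \epsilon})$ and $(X, d)\to(X, d_{\epsilon})$ are homeomorphisms by \cite[Appendix]{BHK}, these balls remain compact for the uniformization metrics. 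The quantifiers are then arranged as usual: given a target tolerance, I first fix $\tilde{\epsilon}$ so small that $3\eta(\tilde{\epsilon})$ is below it, and only afterwards send $n\to\infty$.

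For a fixed $\tilde{\epsilon}>0$, choose $\delta'>0$ as in Remark~\ref{choice of delta'} and let $f_{n}^{\delta'^2}\colon B_{d_n}(p_n, TR)\to X$ be the map from Definition~\ref{p-GH} supplied by the pointed Gromov-Hausdorff convergence, with $T$ the constant from Lemma~\ref{key lemma of ball conv 1}. The almost-isometry property of $f_{n}^{\delta'^2}$ with respect to the uniformization metrics is exactly the content of Lemma~\ref{key lemma of ball conv 3}, namely $|d_{n, \epsilon}(x_n, y_n)-d_{\epsilon}(f_{n}^{\delta'^2}(x_n), f_{n}^{\delta'^2}(y_n))|\le S(\epsilon, T, R)\tilde{\epsilon}$ for all $x_n, y_n\in\bar{B}_{d_n}(p_n, R)$. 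The only defect of $f_{n}^{\delta'^2}$ as a candidate map is that, by property (2) of Definition~\ref{p-GH} and $f_{n}^{\delta'^2}(p_n)=p$, it sends $\bar{B}_{d_n}(p_n, R)$ only into the slightly larger ball $\bar{B}_d(p, R+\delta'^2)$; I correct this by post-composing with a radial projection $\pi_R$ of $X$ onto $\bar{B}_d(p, R)$ (take the point at $d$-distance $R$ on a geodesic from $p$), which moves any point of $\bar{B}_d(p, R+\delta'^2)$ by at most $\delta'^2$ in $d$, hence by at most $\delta'^2$ in $d_{\epsilon}$ since $d_{\epsilon}\le d$. Setting $\Phi_n:=\pi_R\circ f_{n}^{\delta'^2}|_{\bar{B}_{d_n}(p_n, R)}$, the distortion of $\Phi_n$ is at most $S(\epsilon, T, R)\tilde{\epsilon}+2\delta'^2$.

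The substantive remaining point, and the place where the main care is needed, is the $\eta$-density of the image, i.e.\ showing that every point of the closed target ball is well approximated by $\Phi_n$ of a point still lying in $\bar{B}_{d_n}(p_n, R)$. Given $y\in\bar{B}_d(p, R)$, since $TR-\delta'^2>R\ge d(p, y)$, property (3) of Definition~\ref{p-GH} at radius $TR$ yields $z\in B_{d_n}(p_n, TR)$ with $d(f_{n}^{\delta'^2}(z), y)\le\delta'^2$, and then property (2) together with $f_{n}^{\delta'^2}(p_n)=p$ gives $d_n(p_n, z)\le R+2\delta'^2$. The obstacle is that $z$ may lie just outside $\bar{B}_{d_n}(p_n, R)$; I resolve it by projecting $z$ radially inward along a $d_n$-geodesic from $p_n$ (available since $X_n$ is geodesic) to a point $z'\in\bar{B}_{d_n}(p_n, R)$ with $d_n(z, z')\le 2\delta'^2$. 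Applying property (2) to the pair $z, z'$ gives $d(f_{n}^{\delta'^2}(z'), f_{n}^{\delta'^2}(z))\le 3\delta'^2$, whence, using $d_{\epsilon}\le d$ throughout and the projection estimate for $\pi_R$, $d_{\epsilon}(\Phi_n(z'), y)\le 5\delta'^2$. Thus the image of $\Phi_n$ is $5\delta'^2$-dense in $(\bar{B}_d(p, R), d_{\epsilon})$. Note that the coverage furnished by Definition~\ref{p-GH} is with respect to $d$, and it transfers to $d_{\epsilon}$ precisely because $d_{\epsilon}\le d$.

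Combining the two estimates, $\Phi_n$ is an $\eta$-isometry with $\eta:=\max\{S(\epsilon, T, R)\tilde{\epsilon}+2\delta'^2,\ 5\delta'^2\}$ for all $n\ge N_{T, R, \tilde{\epsilon}}$, so $d_{GH}((\bar{B}_{d_n}(p_n, R), d_{n, \epsilon}), (\bar{B}_d(p, R), d_{\epsilon}))\le 3\eta$. Since $\delta'\le\tilde{\epsilon}\wedge 1\to 0$ and $S(\epsilon, T, R)$ does not depend on $\tilde{\epsilon}$, both $S(\epsilon, T, R)\tilde{\epsilon}$ and $\delta'^2$ tend to $0$ as $\tilde{\epsilon}\to 0$, so $\eta\to 0$ and the limit follows. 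The genuinely analytic work --- comparing $d_{n, \epsilon}$ with $d_{\epsilon}$ through the approximating maps by decomposing $d_{\epsilon}$-geodesics --- has already been absorbed into Lemma~\ref{key lemma of ball conv 3}; within this proposition the only real obstacle is the two-sided boundary matching for the closed balls, handled above by the radial projection $\pi_R$ on the codomain and the inward projection of $z$ on the domain side.
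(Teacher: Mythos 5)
Your proposal is correct and follows essentially the same route as the paper: the entire analytic content is delegated to Lemma~\ref{key lemma of ball conv 3}, the coverage comes from Property (3) of Definition~\ref{p-GH} transferred to $d_{\epsilon}$ via $d_{\epsilon}\leq d$, and the conclusion follows from the $\epsilon$-isometry--to--$d_{GH}$ relation with $\tilde{\epsilon}\to 0$ taken last. The only difference is cosmetic: where the paper invokes Remark~\ref{p-GH def remark} to assume $f_{n}^{\delta'^2}(\bar{B}_{d_n}(p_n,R))\subseteq \bar{B}_d(p,R)$ and extends coverage by geodesics in the codomain, you achieve the same two-sided boundary matching with explicit radial projections (on the codomain for the range, on the domain for density), which is equally valid.
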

 \begin{proof}
 	Set $T:=4+L$ as in Lemma~\ref{key lemma of ball conv 1}. For any $\tilde{\epsilon}>0$, by Lemma~\ref{key lemma of ball conv 2} and Lemma~\ref{key lemma of ball conv 3}, there exist $\delta'>0$ and $N:=N_{T, R, \tilde{\epsilon}} \in \mathbb{N}$ such that for $n \geq N$, we have
 		\[
  |d_{n, \epsilon}(x_n, y_n)-d_{\epsilon}(f_{n}^{\delta'^2}(x_n), f_{n}^{\delta'^2}(y_n))| \leq S(\epsilon, T, R)\tilde{\epsilon}
  \]
for every pair of points $x_n, y_n \in \bar{B}_{d_n}(p_n, R)$ where $f_{n}^{\delta'^2} : B_{d_n}(p_n, TR) \to X$ is the map in Definition~\ref{p-GH} and $S(\epsilon, T, R)\tilde{\epsilon} \to 0$ as $\tilde{\epsilon} \to 0$. Recall again that $0<\delta' \leq \tilde{\epsilon}$ by Remark~\ref{choice of delta'}. We may assume again that $f_{n}^{\delta'^2}(\bar{B}_{d_n}(p_n, R)) \subseteq \bar{B}_d(p, R)$. 
From Property (3) in Definition~\ref{p-GH}, we know that, with respect to the metric $d$,
  \[
  \begin{split}
  	\bar{B}_d(p, R-\delta'^2) &\subseteq (f_{n}^{\delta'^2}(\bar{B}_{d_n}(p_n, R)))_{\delta'^2}\\
  \end{split}
  \]
  which tells us that 
  \[
  \bar{B}_d(p, R)\subseteq (f_{n}^{\delta'^2}(\bar{B}_{d_n}(p_n, R)))_{2\delta'^2}
  \]
  since the space is geodesic. This above inclusion holds with respect to $d_{\epsilon}$ as well, so that we get
  \[
  	\bar{B}_d(p, R) \subseteq (f_{n}^{\delta'^2}(\bar{B}_{d_n}(p_n, R)))_{2\delta'^2}
  \]
  with respect to $d_{\epsilon}$. Hence, the map $f_{n}^{\delta'^2} : \bar{B}_{d_n}(p_n, R) \to \bar{B}_{d}(p, R)$ is $M(\epsilon, T, R)\tilde{\epsilon}$-isometry where $M(\epsilon, T, R):=S(\epsilon, T, R)\vee 2$. Hence we conclude that
  \[
  \limsup\limits_{n\to \infty}d_{GH}((\bar{B}_d(p_n, R), d_{n, \epsilon}), (\bar{B}_d(p, R), d_{\epsilon}))\leq M(\epsilon, T, R)\tilde{\epsilon}.
  \]
  Since $\tilde{\epsilon}$ is arbitrary, the proof is complete.
 \end{proof}

 \begin{rmk}\label{ball remark}
	From the proof of this proposition, we know that for fixed $R \geq 1$ and $\tilde{\epsilon}>0$, there exists $N \in \mathbb{N}$ such that for each $n \geq N$, there exists a map $\phi_n : \bar{B}_{d_n}(p_n, R) \to \bar{B}_d(p, R)$ which is an $\tilde{\epsilon}$-isometry with respect to both the uniformization metric and the original metric such that $\phi_n(p_n)=p$.
\end{rmk}

\begin{thm}[Theorem \ref{second theorem}]\label{boundary conv}
	Let $(X_n, d_n, p_n)_n$ be pointed proper geodesic $M$-roughly starlike $\delta$-Gromov hyperbolic spaces. Suppose $(X_n, d_n, p_n)_n$ is pointed Gromov-Hausdorff convergent to $(X, d, p)$. Then for a fixed $0 < \epsilon \leq \epsilon_0(\delta)$, the sequence of uniformized spaces $(X_n^{\epsilon}, d_{n, \epsilon})_n$ Gromov-Hausdorff converges to $(X^{\epsilon}, d_{\epsilon})$ with boundary.
\end{thm}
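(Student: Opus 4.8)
The plan is to reduce the theorem to the construction, for each $\eta>0$ and all sufficiently large $n$, of an $\eta$-isometry with boundary $F_n:\overline{X_n^{\epsilon}}\to\overline{X^{\epsilon}}$. Once this is done, Proposition~\ref{isometry with boundary} gives $d_{GHB}(X_n^{\epsilon},X^{\epsilon})\le 3\eta$ for large $n$, so $d_{GHB}(X_n^{\epsilon},X^{\epsilon})\to 0$, which by Proposition~\ref{conv-dist-equivalent} is exactly the asserted Gromov--Hausdorff convergence with boundary. By Proposition~\ref{GH-limit}, Proposition~\ref{roughly starlike limit} and Remark~\ref{limit is fine}, the limit $(X,d,p)$ is itself a proper geodesic $\tilde{M}$-roughly starlike $\delta$-Gromov hyperbolic space, so that all the spaces in play are uniformly roughly starlike; this uniformity is what makes the constants below independent of $n$. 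The construction of $F_n$ splits each space into a compact ``bulk'', controlled by the ball convergence of Proposition~\ref{ball-conv}, and the ``ends'', which collapse onto the metric boundary and are controlled by a single exponential estimate.

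The key analytic input is a uniform estimate for the $d_{\epsilon}$-distance to the boundary in terms of the original distance to $p$. Using $M$-rough starlikeness, any point $x$ lies within $d$-distance $M$ of a geodesic ray $\gamma$ from $p$; integrating $e^{-\epsilon d(p,\gamma(t))}=e^{-\epsilon t}$ along the tail of $\gamma$ and using the Harnack inequality \eqref{Harnack} to pass from $x$ to the nearby ray point shows $\dist_{d_{\epsilon}}(x,\partial_{d_{\epsilon}}X^{\epsilon})\le C(\epsilon,M)\,e^{-\epsilon d(p,x)}$, with $C$ depending only on $\epsilon$ and the (uniform) rough starlikeness constant. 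Conversely, by the ray description of the boundary in Remark~\ref{Gromov identification}, every $\xi\in\partial_{d_{\epsilon}}X^{\epsilon}$ is the $d_{\epsilon}$-limit of some geodesic ray $\gamma$ from $p$, so $d_{\epsilon}(\gamma(R),\xi)\le\tfrac{1}{\epsilon}e^{-\epsilon R}$. Fixing $R\ge 1$ so large that $C(\epsilon,M)e^{-\epsilon R}<\eta$, I conclude that, measured in $d_{\epsilon}$, the $d$-ball $\bar{B}_d(p,R)$ is $\eta$-dense in $\overline{X^{\epsilon}}$ and lies within $\eta$ of $\partial_{d_{\epsilon}}X^{\epsilon}$; the same holds uniformly for each $X_n^{\epsilon}$.

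With $R$ fixed, Remark~\ref{ball remark} supplies an $\eta$-isometry $\phi_n:\bar{B}_{d_n}(p_n,R)\to\bar{B}_d(p,R)$ with $\phi_n(p_n)=p$ that is an $\eta$-isometry for both the uniformization metric and the original metric. I would set $F_n:=\phi_n$ on the bulk $\bar{B}_{d_n}(p_n,R)$, which already covers the interior $X^{\epsilon}$ up to $\eta$ by the density just established. It remains to define $F_n$ on the ends and on $\partial_{d_{\epsilon}}X_n^{\epsilon}$ so as to cover $\partial_{d_{\epsilon}}X^{\epsilon}$: given $\xi_n\in\partial_{d_{\epsilon}}X_n^{\epsilon}$, represent it by a geodesic ray $\gamma_n$ from $p_n$ (Remark~\ref{Gromov identification}), transport the initial segment $\gamma_n|_{[0,R]}$ by $\phi_n$ into $\bar{B}_d(p,R)$, extend it to a genuine geodesic ray in $X$ via rough starlikeness and the geodesic stability theorem \cite[Theorem 3.7]{Jussi}, and let $F_n(\xi_n)\in\partial_{d_{\epsilon}}X^{\epsilon}$ be its endpoint (far interior points are assigned similarly, consistently with the nearby boundary point). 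Since each of $\xi_n$ and $F_n(\xi_n)$ sits within $\tfrac{1}{\epsilon}e^{-\epsilon R}$ of its radius-$R$ representative, and those representatives are matched by $\phi_n$ up to $\eta$, the distortion of $F_n$ is $O(\eta)$ and every point of $\partial_{d_{\epsilon}}X^{\epsilon}$ is hit up to $\eta$.

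I expect the main obstacle to be precisely this boundary matching: verifying that the ray-based assignment $\xi_n\mapsto F_n(\xi_n)$ is well defined up to an error tending to $0$, has controlled distortion against the interior part, and is surjective onto $\partial_{d_{\epsilon}}X^{\epsilon}$ up to $\eta$. The delicate point is that two rays staying $d_{\epsilon}$-close out to radius $R$ need not have $d_{\epsilon}$-close endpoints a priori; this is salvaged by the exponential decay of the tails (so endpoints differ from their radius-$R$ points by $O(e^{-\epsilon R})$) together with geodesic stability and the bilipschitz identification of $\partial_{d_{\epsilon}}X^{\epsilon}$ with the Gromov boundary in Remark~\ref{Gromov identification}, all of which rely on the uniform rough-starlikeness constant guaranteed by Proposition~\ref{roughly starlike limit}.
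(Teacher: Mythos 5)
Your proposal is correct, and its skeleton is the same as the paper's: reduce via Propositions~\ref{isometry with boundary} and~\ref{conv-dist-equivalent} to producing approximate isometries with boundary, use Proposition~\ref{ball-conv} (Remark~\ref{ball remark}) on the bulk $\bar{B}_{d_n}(p_n,R)$, and control everything beyond radius $R$ by radial geodesics and rays, the tail estimate $\int_R^\infty e^{-\epsilon t}\,dt\le\tilde{\epsilon}$, the Harnack inequality \eqref{Harnack}, and $M$-rough starlikeness; your condition $C(\epsilon,M)e^{-\epsilon R}<\eta$ is exactly the paper's choice $e^{-\epsilon(R-2\tilde{\epsilon}-M)}/\epsilon\le\tilde{\epsilon}$. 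The one genuine difference is the step you yourself flag as the main obstacle, and the paper shows it can be avoided entirely. You require $F_n$ to send $\partial_{d_{n,\epsilon}}X_n^{\epsilon}$ into $\partial_{d_\epsilon}X^{\epsilon}$, which forces you to extend $\phi_n\circ\gamma_n|_{[0,R]}$ to an honest geodesic ray in $X$ (hence to invoke rough starlikeness of the limit through Proposition~\ref{roughly starlike limit} together with geodesic stability) and then to verify well-definedness, distortion, and approximate surjectivity of the endpoint assignment. The paper's map $\Phi_n$ instead sends every point of $\overline{X_n^{\epsilon}}\setminus\bar{B}_{d_n}(p_n,R)$, boundary points included, to the \emph{interior} point $\phi_n(\gamma_n(R))$, where $\gamma_n$ is the radial geodesic or ray toward that point. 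This is permitted because an $\epsilon$-isometry with boundary is not required to satisfy $f(\partial X)\subseteq\partial Y$; it only needs the distortion bound and the covering conditions $Y\subseteq(f(X))_{\epsilon}$ and $\partial Y\subseteq(f(\partial X))_{\epsilon}$. With this definition the distortion bound is immediate from $d_{n,\epsilon}(x_n,\gamma_n(R))\le\tilde{\epsilon}$, and the boundary covering is precisely the chain you describe: a point $x\in\partial_{d_\epsilon}X^{\epsilon}$ is $\tilde{\epsilon}$-close to $\gamma(R)$ for some ray $\gamma$ in $X$ (Remark~\ref{Gromov identification}), $\gamma(R)$ is $\tilde{\epsilon}$-close to some $\phi_n(x_n)$, and rough starlikeness of $X_n$ plus Harnack produce a boundary point $a_n\in\partial_{d_{n,\epsilon}}X_n^{\epsilon}$ with $d_{\epsilon}(x,\Phi_n(a_n))\le 5\tilde{\epsilon}$. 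So your ray-extension machinery and the constant $\tilde{M}$ can be dropped: explicit rough-starlikeness constructions are carried out only inside the spaces $X_n$, while for the limit the proof uses only that it is proper, geodesic, and $\delta$-hyperbolic (Proposition~\ref{GH-limit}, Remark~\ref{limit is fine}), so that the uniformization and the ray description of $\partial_{d_\epsilon}X^{\epsilon}$ apply.
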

	\begin{proof}
	Let $\tilde{\epsilon}>0$ be given. Choose $R>0$ such that
	\begin{equation}\label{choice of R}
		\frac{e^{-\epsilon(R-2\tilde{\epsilon}-M)}}{\epsilon} \leq \tilde{\epsilon}.
	\end{equation}
	By Remark \ref{ball remark}, there exists $N \in \mathbb{N}$ such that for each $n \geq N$, there exists an $\tilde{\epsilon}$-isometry $\phi_n : \bar{B}_{d_n}(p_n, R) \to \bar{B}_d(p, R)$  with respect to both the uniformization metric and the original metric such that $\phi_n(p_n)=p$. Let $x_n \in \overline{X^{\epsilon}_n} \setminus \bar{B}_{d_n}(p_n, R)$. Then there exists a geodesic curve (or ray) $\gamma_n$ from $p_n$ to $x_n$ with respect to $d_n$, see Remark \ref{Gromov identification}. Set $\tilde{x}_n:=\gamma_n(R) \in \bar{B}_{d_n}(p_n, R)$. We define a map $\Phi_n : \overline{X^{\epsilon}_{n}} \to \overline{X^{\epsilon}}$ by
	 \[
	  \Phi_n(x_n):=
     \begin{cases}
       \phi_n(x_n) &\quad\text{if $x_n \in \bar{B}(p_n, R)$}\\
       \phi_n(\tilde{x}_n) &\quad\text{if $x_n \in \overline{X^{\epsilon}_n} \setminus \bar{B}_{d_n}(p_n, R)$}. \\
     \end{cases}
     \]
     By Proposition \ref{isometry with boundary}, it is sufficient to prove that the map $\Phi_n$ is a $5\tilde{\epsilon}$-isometry with boundary. Note that for every $x_n \in \overline{X^{\epsilon}_n} \setminus \bar{B}_{d_n}(p_n, R)$,
	\begin{equation}\label{modification 1}
		d_{n, \epsilon}(\tilde{x}_n, x_n) \leq \int_{R}^{l_{d_n}(\gamma)}e^{-\epsilon d(p_n, \gamma_n(t))}\, dt \leq \int_{R}^{\infty}e^{-\epsilon t}\, dt  \leq \tilde{\epsilon}
	\end{equation}
	where $\gamma_n$ is a geodesic curve(or ray) from $p_n$ to $x_n$ with respect to the metric $d$. Hence, for every pair of points $x_n, x_n' \in \overline{X^{\epsilon}_n}$, we have
	\begin{align}\label{gb3}
		|d_{\epsilon}(\Phi_n(x_n), \Phi_n(x_n'))-d_{n, \epsilon}(x_n, x_n')| &\leq |d_{\epsilon}(\Phi_n(x_n), \Phi_n(x_n'))-d_{n, \epsilon}(\tilde{x}_n, \tilde{x}_n'))| \notag \\
		&\ \ \ +|d_{n, \epsilon}(\tilde{x}_n, \tilde{x}_n'))-d_{n, \epsilon}(x_n, x_n')| \notag \\
		&\leq \tilde{\epsilon} + |d_{n, \epsilon}(\tilde{x}_n, \tilde{x}_n')) - d_{n, \epsilon}(\tilde{x}_n, x_n'))|\notag \\
		&\ \ \ +|d_{n, \epsilon}(\tilde{x}_n, x_n')-d_{n, \epsilon}(x_n, x_n')| \notag \\
		&\leq 3 \tilde{\epsilon}.\notag
	\end{align}
	where  we set $\tilde{x}_n:=x_n$ if $x_n \in \bar{B}_{d_n}(p_n, R)$ and used  \eqref{modification 1}. Therefore we obtain
	\begin{equation}
		\dis(\Phi_n) \leq 3\tilde{\epsilon} \notag
	\end{equation}
	with respect to the uniformization metric. We next prove that $X^{\epsilon} \subseteq (\Phi_n(X^{\epsilon}_n))_{2\tilde{\epsilon}}$. Let $x \in X^{\epsilon}$. We may assume that $x \in X^{\epsilon} \setminus \bar{B}_{d}(p, R)$ and take a geodesic curve $\gamma$ with respect to $d$ from $p$ to $x$, setting $\tilde{x}:=\gamma(R)$. By the same argument as in \eqref{modification 1}, we know that $d_{\epsilon}(x, \tilde{x}) \leq \tilde{\epsilon}$. Since $\phi_n : \bar{B}_{d_n}(p_n, R) \to \bar{B}_d(p, R)$  is $\tilde{\epsilon}$-isometry with respect to the uniformization metric, there exists $x_n \in \bar{B}_d(p, R)$ such that $d_{\epsilon}(\phi_n(x_n), \tilde{x}) \leq \tilde{\epsilon}$. Hence we obtain 
	\[
	d_{\epsilon}(x, \Phi_n(x_n)) =d_{\epsilon}(x, \phi_n(x_n)) \leq d_{\epsilon}(x, \tilde{x})+ d_{\epsilon}(\tilde{x}, \phi_n(x_n)) \leq 2\tilde{\epsilon}.
	\]
	We next claim that $\partial_{d_{\epsilon}} X^{\epsilon} \subseteq (\Phi(\partial_{d_{n, \epsilon}}X^{\epsilon}_{n}))_{5\tilde{\epsilon}}$. By Remark~\ref{Gromov identification}, for any $x \in \partial_{d_{\epsilon}} X^{\epsilon}$, we can take a geodesic ray $\gamma : [0, \infty) \to X$ with respect to the metric $d$ such that $d_{\epsilon}(x, \gamma(k)) \to 0$ as $k \to \infty$. Set $\tilde{x}:= \gamma(R)$. As in (\ref{modification 1}), we know that 
	\begin{equation}\label{GHB 3}
		d_{\epsilon}(x, \tilde{x}) \leq \int_{R}^{\infty}e^{-\epsilon t}\, dt \leq  \tilde{\epsilon}.
	\end{equation} 
	Since $\phi_n : \bar{B}_{d_n}(p_n, R) \to \bar{B}_d(p, R)$ is an $\tilde{\epsilon}$-isometry with respect to the uniformization metric, there exists $x_n \in \bar{B}_{d_n}(p_n, R)$ such that $d(\phi_n(x_n), \tilde{x}) \leq \tilde{\epsilon}$. The $M$-roughly starlike property of $X_n$ allows us to take a geodesic ray $\gamma_n : [0, \infty) \to X_n$ emanating from $p_n$  with respect to $d_n$ such that $\dist (x_n, \gamma_n) \leq M$. Pick $z_n \in \gamma_n$ such that $d(x_n, z_n)= \dist(x_n, \gamma_n)$. Then since
	\[
	d_n(p_n, x_n) \geq d(p, \phi_n(x_n))-\tilde{\epsilon} \geq d(p, \tilde{x})-d(\tilde{x}, \phi_n(x_n))-\tilde{\epsilon}\geq R-2\tilde{\epsilon},
	\]
	we get
	\begin{equation}\label{GHB 4}
		d_{n, \epsilon}(x_n, z_n) \leq \int_{0}^{l_d(\beta)} e^{-\epsilon d(p, \beta(t))}\, dt \leq e^{-\epsilon (R - 2\tilde{\epsilon})}\int_{0}^{l_d(\beta)} e^{\epsilon t}\, dt \leq \tilde{\epsilon}
	\end{equation}
	where $\beta$ is a geodesic curve from $x_n$ to $z_n$ and we employed the Harnack inequality (\ref{Harnack}). Also, noting that
	\begin{equation}
		d_n(p_n, z_n)\geq d_n(p_n, x_n)-d_n(x_n, z_n)\geq d(p, \phi_n(x_n))-\tilde{\epsilon}-M \geq R-2\tilde{\epsilon}-M, \notag
	\end{equation}
	we have 
	\begin{equation}\label{GHB 5}
 d_{n, \epsilon}(z_n, \gamma_n(R))\leq \int_{R-2\tilde{\epsilon}-M}^{\infty}e^{-\epsilon t} \, dt\leq \tilde{\epsilon}.
 \end{equation} 
	Set $y:=\Phi(a_n):=\Phi_n(\gamma_n(R))=\phi_n(\gamma_n(R))$ where $a_n \in \partial_{d_{n, \epsilon}}X^{\epsilon}_{n}$ is the unique point such that $\gamma_n(k) \to a_n$ as $k \to \infty$. 
	Then, combining (\ref{GHB 3}), (\ref{GHB 4}), and (\ref{GHB 5}), we have
	\begin{align}
		d_{\epsilon}(x, y) &\leq d_{\epsilon}(x, \tilde{x})+ d_{\epsilon}(\tilde{x}, \phi_n(x_n))+d_{\epsilon}(\phi_n(x_n), y)\notag \\
		&\leq d_{\epsilon}(x, \tilde{x})+ d(\tilde{x}, \phi_n(x_n))+d_{\epsilon}(\phi_n(x_n), y)\notag \\
		&\leq 3\tilde{\epsilon}+d_{n, \epsilon}(x_n, \gamma_n(R)) \notag \\
		&\leq 3\tilde{\epsilon}+d_{n, \epsilon}(x_n, z_n)+d_{n, \epsilon}(z_n, \gamma_n(R)) \notag \\
		&\leq 5\tilde{\epsilon}.\notag
	\end{align}
	Therefore we obtain
	\[
	\partial_{d_{\epsilon}} X^{\epsilon} \subseteq (\Phi(\partial_{d_{n, \epsilon}}X^{\epsilon}_{n}))_{5\tilde{\epsilon}}.
	\]
	This completes the proof.
\end{proof}

\section{Stability of quasihyperbolization}
The rest of the paper is devoted to the proof of Theorem \ref{fouth theorem}. The proof is based on the approach used already in the section on the convergence of uniformized spaces. Recall that for a metric space $(\Omega, d)$, $d(\cdot) := d(\ \cdot \ , \partial \Omega)$ and all curves are parametrized by arc-length with respect to $d$ unless otherwise stated.
\begin{prop}\label{key lemma of convergence}
Let $(\Omega, d)$ be an $A$-uniform space. Then for each $R>0$ and $p \in \Omega$,
\[
B_{k}(p, R) \subseteq \Omega\setminus(\partial \Omega)_{c}
\]
where $c:=\frac{d(p)/2}{e^{R}-1}\wedge d(p)/2$, $B_{k}(p, R)$ is a ball with radius $R$ with respect to  the quasihyperbolic metric $k$, and $(\partial \Omega)_{c}$ is the $c$-neighborhood of the boundary $\partial \Omega$ with respect to $d$.
\end{prop}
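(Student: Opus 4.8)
The plan is to reduce the set inclusion to a pointwise lower bound on the distance-to-boundary function and to extract that bound from the elementary comparison between the quasihyperbolic metric $k$ and $d(\cdot)=\dist(\cdot,\partial\Omega)$. Writing out the definitions, the asserted inclusion $B_{k}(p,R)\subseteq \Omega\setminus(\partial\Omega)_{c}$ is equivalent to the implication
\[
k(p,x)<R \ \Longrightarrow\ \dist(x,\partial\Omega)>c .
\]
Thus it suffices to prove the fundamental estimate
\[
k(p,x)\ \ge\ \log\frac{d(p)}{d(x)}\qquad\text{for every }x\in\Omega,
\]
and then to check that this forces $d(x)>c$.

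For the estimate, fix any rectifiable curve $\gamma:[0,l_{d}(\gamma)]\to\Omega$ from $p$ to $x$ parametrized by arclength with respect to $d$, and set $g(t):=d(\gamma(t))=\dist(\gamma(t),\partial\Omega)$. Since $\dist(\cdot,\partial\Omega)$ is $1$-Lipschitz and $\gamma$ has unit speed, $g$ is $1$-Lipschitz, hence absolutely continuous with $|g'(t)|\le 1$ a.e.; moreover $g>0$ on the compact parameter interval, so it is bounded below by a positive constant. Consequently
\[
\int_{0}^{l_{d}(\gamma)}\frac{1}{d(\gamma(t))}\,dt\ \ge\ \int_{0}^{l_{d}(\gamma)}\frac{|g'(t)|}{g(t)}\,dt\ \ge\ \left|\int_{0}^{l_{d}(\gamma)}\frac{g'(t)}{g(t)}\,dt\right|=\left|\log\frac{g(l_{d}(\gamma))}{g(0)}\right|=\left|\log\frac{d(x)}{d(p)}\right|.
\]
Taking the infimum over all such $\gamma$ gives $k(p,x)\ge\bigl|\log(d(p)/d(x))\bigr|\ge\log(d(p)/d(x))$, as wanted.

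It remains to compare with $c$. If $x\in B_{k}(p,R)$ then $k(p,x)<R$, so $\log(d(p)/d(x))<R$ and hence $d(x)>d(p)e^{-R}$. A direct computation shows $d(p)e^{-R}\ge c$ in both regimes: when $R\le\log 2$ one has $e^{-R}\ge\frac12$, so $d(p)e^{-R}\ge d(p)/2=c$; when $R\ge\log 2$ one has $2(1-e^{-R})\ge 1$, i.e. $d(p)e^{-R}\ge\frac{d(p)/2}{e^{R}-1}=c$. In either case $d(x)>d(p)e^{-R}\ge c$, so $x\notin(\partial\Omega)_{c}$ and the inclusion follows. The only delicate points are the justification of the logarithmic estimate for merely rectifiable (not smooth) curves, which is exactly the $1$-Lipschitz/absolute-continuity argument above, and the bookkeeping showing that the stated $c$ is dominated by the sharp bound $d(p)e^{-R}$. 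The specific shape of $c$ — in particular the factor $e^{R}-1$ — is most naturally explained by the companion $d$-length bound $l_{d}(\gamma)<d(p)(e^{R}-1)$ for a quasihyperbolically short curve, obtained by the same Gr\"onwall argument applied to the upper bound $g(s)\le d(p)e^{s}$ in the quasihyperbolic arclength parameter $s$; I would use whichever of these two routes gives the cleanest constant for the application in Theorem~\ref{fouth theorem}.
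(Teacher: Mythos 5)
Your proof is correct, but it follows a genuinely different route from the paper's. You reduce everything to the classical Gehring--Palka type lower bound $k(p,x)\ge\bigl|\log(d(p)/d(x))\bigr|$, proved for \emph{arbitrary} rectifiable curves via the $1$-Lipschitz/absolute-continuity argument for $g(t)=d(\gamma(t))$, and then verify by an elementary case analysis in $R$ that the resulting bound $d(x)>d(p)e^{-R}$ dominates $c=\frac{d(p)/2}{e^{R}-1}\wedge d(p)/2$. The paper instead takes a quasihyperbolic \emph{geodesic} $\gamma$ from $x$ to $p$ (available since $(\Omega,k)$ is proper geodesic by \cite[Theorem 3.6]{BHK}), splits into the trivial case $d(p,x)<d(p)/2$ and the case $d(p,x)\ge d(p)/2$, and in the latter uses the nearest boundary point $y$ to $x$ to get the pointwise upper bound $d(\gamma(t))\le t+d(x)$, whence $R>\log\frac{d(x)+d(p)/2}{d(x)}$ and $d(x)>\frac{d(p)/2}{e^{R}-1}$ directly — so the stated constant $c$ falls out of the computation with no final comparison step. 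Your version buys slightly more: it avoids invoking existence of quasihyperbolic geodesics (you only need near-minimizing curves and take an infimum), it dispenses with the case split on $d(p,x)$, and it produces the sharper exponential bound $d(p)e^{-R}$; the cost is the extra bookkeeping step showing $d(p)e^{-R}\ge c$, which you carried out correctly in both regimes $R\le\log 2$ and $R\ge\log 2$. The paper's version buys a proof whose constant matches the statement verbatim, which also explains why $c$ has the factor $e^{R}-1$ (your closing speculation about a length bound is a plausible alternative explanation but is not what the paper does). One small point to make explicit in your write-up: positivity of $g$ along the curve, i.e.\ $\dist(x,\partial\Omega)>0$ for $x\in\Omega$, rests on local compactness of the uniform space, exactly as the paper notes in the proof of its Lemma on isometries; with that said, your chain-rule step $\int g'/g = \log g(l)-\log g(0)$ is fully justified.
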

\begin{proof}
	 For each $x \in B_{k}(p, R)$, if $d(p, x)<d(p)/2$, then $x \in \Omega\setminus(\partial \Omega)_{c}$. Hence we assume that $d(p, x)\geq d(p)/2$. Take a curve $\gamma : [0, l_d(\gamma)]\to \Omega$ from $x$ to $p$ that is geodesic with respect to the quasihyperbolic metric $k$ and the closest point $y \in \partial \Omega$ from $x$.  Then we have
	\[
	d(\gamma(t)) \leq d(\gamma(t), y)\leq d(\gamma(t), x)+d(x, y)= d(\gamma(t), x)+d(x) = t+d(x).
	\]
	Hence we get
	\[
	R>\int_{0}^{l_d(\gamma)}\frac{1}{d(\gamma(t))}\, dt \geq \int_{0}^{l_d(\gamma)}\frac{1}{d(x)+t}\, dt\geq \text{log}\Big(\frac{d(x)+l_d(\gamma)}{d(x)}\Big)\geq \text{log}\Big(\frac{d(x)+d(p)/2}{d(x)}\Big).
	\] 
	 We conclude that
	\[
	e^{R} > \frac{d(x)+d(p)/2}{d(x)},
	\]
	which implies $d(p)/2 < d(x)(e^R-1).$	Therefore $x \notin (\partial \Omega)_{c}$.
\end{proof}

\begin{lemma}\label{key lemma of quasihyperbolization 1}
Let $(\Omega, d)$ be an $A$-uniform space and $\delta'>0$ be given. For every pair of points $x, y \in \Omega$ with $d(x, y)\geq \delta'/2$, let $\gamma$ be a geodesic curve from $x$ to $y$ with respect to the quasihyperbolic metric $k$. Then we can take subcurves $(\gamma_i)_{i=1}^{N}$ such that
\[
\gamma=\sum_{i=1}^{N}\gamma_{i}, \ \ \ \delta'/2 \leq l_d(\gamma_i) < \delta', \ \ \  \text{and} \ \ \ N \leq \frac{2B\diam(\Omega)}{\delta'}.
\]
where the constant $B$ only depends on the constant $A$.
\end{lemma}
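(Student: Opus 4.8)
The plan is to reduce the whole statement to a single length estimate, namely that the $d$-length of the quasihyperbolic geodesic $\gamma$ is controlled by the diameter of $\Omega$:
\[
l_d(\gamma) \le B\,\diam(\Omega), \qquad B = B(A).
\]
Granting this, the production of the subcurves $(\gamma_i)_{i=1}^N$ and the bound on $N$ are elementary bookkeeping, so the real content lies entirely in the length estimate.

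First I would secure the length bound. The geodesic $\gamma$ exists because $(\Omega, k)$ is a proper geodesic space by \cite[Theorem 3.6]{BHK}. The key input is the well-known fact that in an $A$-uniform space every quasihyperbolic geodesic is a $B$-uniform curve with $B = B(A)$ (classical in the Euclidean setting, see \cite{GeO}, and available in the present metric setting, see \cite{BHK}). Applying the first condition in Definition~\ref{uniform curve} to $\gamma$ gives $l_d(\gamma) \le B\,d(x,y)$, and since $d(x,y) \le \diam(\Omega)$ we obtain $l_d(\gamma) \le B\,\diam(\Omega)$. I emphasize that the naive estimate
\[
l_d(\gamma) = \int_0^{l_d(\gamma)} \frac{d(\gamma(t))}{d(\gamma(t))}\,dt \le \diam(\Omega)\,k(x,y)
\]
is useless here, since $k(x,y) \to \infty$ as $x$ or $y$ tends to $\partial\Omega$; it is precisely the uniform-curve property that keeps $\gamma$ short even when $k(x,y)$ is large, and this is where the dependence of $B$ on $A$ alone enters.

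Next I would carry out the subdivision. Parametrize $\gamma$ by $d$-arclength on $[0, l_d(\gamma)]$, which is legitimate since $l_d(\gamma) < \infty$ by the previous step. Because $d(x,y) \ge \delta'/2$, the length--distance inequality gives $l_d(\gamma) \ge d(x,y) \ge \delta'/2$, so $\gamma$ is long enough to be cut into consecutive subcurves each of $d$-length in $[\delta'/2, \delta')$: if $l_d(\gamma) < \delta'$ take a single piece, and otherwise pick an integer $N$ in the interval $(l_d(\gamma)/\delta',\, 2l_d(\gamma)/\delta']$, which is nonempty since $l_d(\gamma)/\delta' \ge 1$, and split $\gamma$ into $N$ pieces of equal $d$-length $l_d(\gamma)/N \in [\delta'/2, \delta')$. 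In either case, summing the lower bounds $\delta'/2 \le l_d(\gamma_i)$ over $i$ yields
\[
\frac{\delta'}{2}\,N \le \sum_{i=1}^N l_d(\gamma_i) = l_d(\gamma) \le B\,\diam(\Omega),
\]
and therefore $N \le \dfrac{2B\,\diam(\Omega)}{\delta'}$, which is the asserted bound.

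The one genuine difficulty is the length estimate $l_d(\gamma) \le B\,\diam(\Omega)$, equivalently locating or reproving the statement that quasihyperbolic geodesics of an $A$-uniform space are $B(A)$-uniform curves; everything downstream is routine. Should a self-contained treatment be preferred to a citation, one would reprove this from the standard two-sided comparison of $k(x,y)$ with $\log\!\big(1 + d(x,y)/(d(x)\wedge d(y))\big)$ in uniform spaces together with a subdivision argument as in \cite{BHK}, and it is in that argument that the constant $B = B(A)$ is manufactured.
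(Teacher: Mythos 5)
Your proposal is correct and follows essentially the same route as the paper: both rest on the fact (cited in the paper as \cite[Theorem 2.10]{BHK}) that a quasihyperbolic geodesic in an $A$-uniform space is a $B(A)$-uniform curve, so that $l_d(\gamma)\leq B\,d(x,y)\leq B\diam(\Omega)$, after which the subdivision into pieces of $d$-length in $[\delta'/2,\delta')$ and the counting bound on $N$ are identical. Your write-up is in fact slightly more careful than the paper's, since you explicitly note that the hypothesis $d(x,y)\geq\delta'/2$ guarantees $l_d(\gamma)\geq\delta'/2$, making such a subdivision possible.
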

\begin{proof}
Let  $x, y \in \Omega$ and $\gamma$ be a geodesic curve between $x$ and $y$ with respect to the quasihyperbolic metric $k$. Note that, by \cite[Theorem 2.10]{BHK}, $\gamma$ is a $B:= B(A)$-uniform curve, in particular,
\begin{equation}\label{length bound quasihyperbolization}
	l_d(\gamma) \leq Bd(x, y) \leq B\text{diam}(\Omega).
\end{equation}
Split this curve $\gamma$ into $N$ subcurves $(\gamma_{i})_{i=1}^{N}$ so that $\delta'/2 \leq t_{i}:= l_d(\gamma_{i})<\delta'$ for $1\leq i \leq N$. Then we have
\[
\frac{\delta'}{2}\times N \leq \sum\limits_{i=1}^{N} l_{d}(\gamma_{i}) \leq B\text{diam}(\Omega),
\]
which tells us that $N \leq \frac{2B\text{diam}(\Omega)}{\delta'}$.
\end{proof}

\begin{rmk}\label{choice of delta' 2}
 	Let $\alpha>0$ be given. In order to show the next lemma, we note that for any $\epsilon>0$, the function $t \mapsto 1/t$ is uniformly continuous on $[\alpha, \infty)$, i.e., for any $\epsilon>0$, there exists $0<\delta' \leq \epsilon\wedge 1$ such that
 	\begin{equation}\label{uniform continuity of 1/t}
 		|\frac{1}{t}-\frac{1}{t'}| \leq \epsilon
 	\end{equation}
 	holds for every $t, t' \in [0, \infty)$ with $|t-t'| \leq 4\delta'$. 
 \end{rmk}

\begin{lemma}\label{key lemma of quasihyperbolization 2}
	Let $(\Omega, d)$ be an $A$-uniform space and $p \in \Omega$ and $R>0$ be given. Let $0<\alpha<c$ be a positive constant where $c$ is the constant given for $B_k(p, 3R)$ in Lemma~\ref{key lemma of convergence}. Then for any $\epsilon>0$, let $\delta'>0$ be a constant from Remark~\ref{choice of delta' 2} for the interval $[\alpha, \infty)$. Then for every pair of points $x, y \in B_k(p, R)$ with $d(x, y)\geq \delta'/2$, we have
	\[
	\int_{0}^{l_{d}(\gamma)}\frac{1}{d(\gamma(t))} \, dt \geq \sum\limits_{i=1}^{N}\frac{1}{d(\gamma_{i}(t_i))} l_{d}(\gamma_{i})-B\diam(\Omega)\epsilon
	\]
	where $\gamma$ is a geodesic curve from $x$ to $y$ with respect to the quasihyperbolic metric $k$, $(\gamma_i)_{i=1}^{N}$ are the subcurves of $\gamma$ taken from Lemma~\ref{key lemma of quasihyperbolization 1}, and $t_i:=l_d(\gamma_i)$.
\end{lemma}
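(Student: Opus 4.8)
The plan is to mirror the argument of Lemma~\ref{key lemma of ball conv 2} exactly, with the exponential weight $e^{-\epsilon d(p,\,\cdot\,)}$ there replaced by the conformal weight $1/d(\cdot)$ here, and with the role played there by the bounded diameter of the uniformized space taken over here by the length bound $l_d(\gamma)\leq B\diam(\Omega)$ recorded in \eqref{length bound quasihyperbolization}. First I would invoke Lemma~\ref{key lemma of quasihyperbolization 1} to produce the $k$-geodesic $\gamma$ from $x$ to $y$ together with its subcurves $(\gamma_i)_{i=1}^{N}$ satisfying $\delta'/2\leq l_d(\gamma_i)<\delta'$ and $N\leq 2B\diam(\Omega)/\delta'$, recalling that each $\gamma_i$ is parametrized by $d$-arclength on $[0,t_i]$ with $t_i=l_d(\gamma_i)$.

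The crucial preliminary step, and the one I expect to be the main obstacle, is to bound the denominator $d(\gamma(t))$ away from $0$ so that the integrand never blows up and so that the uniform continuity of $t\mapsto 1/t$ on $[\alpha,\infty)$ can be applied. To this end I would show $\gamma\subseteq B_k(p,3R)$: since $x,y\in B_k(p,R)$ and $\gamma$ is a $k$-geodesic, any point $z$ on $\gamma$ satisfies $k(p,z)\leq k(p,x)+k(x,z)\leq k(p,x)+k(x,y)< R+2R=3R$, where $k(x,z)\leq k(x,y)$ because $z$ lies on a geodesic and $k(x,y)<2R$ by the triangle inequality. Then Proposition~\ref{key lemma of convergence}, applied to the ball $B_k(p,3R)$, yields $\gamma\subseteq \Omega\setminus(\partial\Omega)_c$, that is $d(\gamma(t))>c>\alpha$ for every $t$; this is precisely why the hypothesis $0<\alpha<c$ is imposed.

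With this lower bound in hand the remainder is routine. On each subcurve $\gamma_i$, since $d(\cdot)=\dist(\cdot,\partial\Omega)$ is $1$-Lipschitz and $\gamma_i$ has $d$-length at most $\delta'$, we have $|d(\gamma_i(t))-d(\gamma_i(t_i))|\leq|t-t_i|<\delta'\leq 4\delta'$ for every $t\in[0,t_i]$; as both values lie in $[\alpha,\infty)$, Remark~\ref{choice of delta' 2} gives $|1/d(\gamma_i(t))-1/d(\gamma_i(t_i))|\leq\epsilon$. Splitting the integral over the subcurves and bounding each integrand below by $1/d(\gamma_i(t_i))-\epsilon$ then yields
\[
\int_{0}^{l_{d}(\gamma)}\frac{1}{d(\gamma(t))}\,dt \;\geq\; \sum_{i=1}^{N}\frac{1}{d(\gamma_{i}(t_i))}\,l_{d}(\gamma_{i})-\epsilon\,l_d(\gamma),
\]
and the proof concludes by absorbing the estimate $l_d(\gamma)\leq B\diam(\Omega)$ from \eqref{length bound quasihyperbolization} into the error term.
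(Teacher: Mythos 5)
Your proposal is correct and follows essentially the same route as the paper: decompose $\gamma$ via Lemma~\ref{key lemma of quasihyperbolization 1}, use Proposition~\ref{key lemma of convergence} to get $d(\gamma(t))\geq c>\alpha$ on $B_k(p,3R)$, apply the uniform continuity of $t\mapsto 1/t$ from Remark~\ref{choice of delta' 2} on each subcurve, and absorb $\epsilon\, l_d(\gamma)$ using the bound $l_d(\gamma)\leq B\diam(\Omega)$. The only difference is that you explicitly verify $\gamma\subseteq B_k(p,3R)$ by the triangle inequality along the $k$-geodesic, a detail the paper simply asserts; this is a welcome clarification, not a different argument.
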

\begin{proof}
For any $\epsilon>0$, choose $\delta'>0$ as in Remark~\ref{choice of delta' 2}. Let $x, y \in B_k(p, R)$ with $d(x, y)\geq \delta'/2$ and $\gamma \subseteq B_k(p, 3R)$ be a geodesic curve from $x$ to $y$ with respect to $k$. Then, we can take subcurves $(\gamma_i)_{i=1}^{N}$ as in Lemma~\ref{key lemma of quasihyperbolization 1}. Since $d(\gamma(t)) \geq c > \alpha$ by Lemma~\ref{key lemma of convergence}, from Remark~\ref{choice of delta' 2} we have 
	\begin{align}\label{abc}
 		\int_{0}^{l_{d}(\gamma)}\frac{1}{d(\gamma(t))} \, dt &= \sum\limits_{i=1}^{N}\int_{0}^{l_{d}(\gamma_{i})} \frac{1}{d(\gamma_{i}(t))}\, dt\notag \\
 		&\geq \sum\limits_{i=1}^{N}\int_{0}^{l_{d}(\gamma_{i})}\Big(\frac{1}{d(\gamma_{i}(t_i))}-\epsilon\Big) \, dt\notag \\
 		&\geq \sum\limits_{i=1}^{N} \frac{1}{d(\gamma_{i}(t_i))} l_{d}(\gamma_{i})-\epsilon l_{d}(\gamma)\notag \\
 		&\geq \sum\limits_{i=1}^{N}\frac{1}{d(\gamma_{i}(t_i))} l_{d}(\gamma_{i})-B\text{diam}(\Omega)\epsilon.
 	\end{align}
 	where we used \eqref{length bound quasihyperbolization} for the last inequality. This completes the proof.
\end{proof}

\begin{prop}\label{key lemma of quasihyperbolization 3}
	Let $(\Omega_n, d_n)_n \subseteq \mathcal{U}(A, R)$ and $(\Omega, d) \in \mathcal{M}$. Suppose that $(\Omega_n, d_n)_n$ are all length spaces and $d_{GHB}(\Omega_n, \Omega) \to 0$ as $n \to \infty$. Then there exist $p_n \in \Omega_n$ and $p \in \Omega$ such that for each $R>0$ and $\epsilon>0$, there exists $N \in \mathbb{N}$ such that for each $n \geq N$, we have a map $I_n : B_{k_n}(p_n, R) \to \Omega$ with $I(p_n)=p$ such that
	\[
  |k_n(x_n, y_n)-k(I_n(x_n), I_n(y_n))| \leq S\epsilon
  \]
for some $S>0$ which only depends on the constants $A>0$, $R>0$ and $p \in \Omega$.
	\end{prop}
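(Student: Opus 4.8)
The plan is to adapt, almost verbatim, the Riemann-sum comparison that proved Lemma~\ref{key lemma of ball conv 3} and Proposition~\ref{ball-conv} to the quasihyperbolic setting, replacing the weight $e^{-\epsilon d(p,\cdot)}$ by the weight $1/d(\cdot)$ and replacing the base-point control by control of the distance to the boundary. First I would fix the ambient picture: by Proposition~\ref{conv-dist-equivalent} I may assume all the $\overline{\Omega}_n$ and $\overline{\Omega}$ are isometrically embedded in one metric space $Z$ with $\epsilon_n := d^Z_H(\Omega_n,\Omega)\to 0$ and $\tilde{\epsilon}_n := d^Z_H(\partial\Omega_n,\partial\Omega)\to 0$. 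Fix an interior point $p\in\Omega$ and pick $p_n\in\Omega_n$ with $d_Z(p_n,p)\le\epsilon_n$; since isometric embeddings preserve distance to the boundary and the boundaries converge, $|d_n(p_n)-d(p)|\le\epsilon_n+\tilde{\epsilon}_n\to 0$, so $d_n(p_n)\ge d(p)/2>0$ for large $n$. By \eqref{converse not} we have $\overline{\Omega}_n\to\overline{\Omega}$ in $d_{GH}$, so the diameters are bounded by a single constant $D$; moreover, because each $\Omega_n$ is a length space, each compact $\overline{\Omega}_n$ is geodesic by Hopf--Rinow, and hence so is the $d_{GH}$-limit $\overline{\Omega}$. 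By Lemma~\ref{uniform space stable}, $\Omega$ is itself an $A$-uniform space, so $k$ is defined on it.

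Now fix $R>0$ and $\epsilon>0$. Applying Proposition~\ref{key lemma of convergence} with radius $6R$ (large enough to contain every $k_n$-geodesic between points of $B_{k_n}(p_n,R)$ and, once the forward estimate is in hand, every $k$-geodesic between their images) I obtain one constant $c>0$ depending only on $R$ and $d(p)$ such that all these geodesics lie in the regions $\{d(\cdot)\ge c\}$. I set $\alpha:=c/2$ and choose $\delta'$ from Remark~\ref{choice of delta' 2} for $[\alpha,\infty)$, shrinking it so that any geodesic of $d$-length $\le 3\delta'$ issuing from $\{d\ge c\}$ stays in $\{d\ge\alpha\}$; this keeps every weight $1/d(\cdot)$ that appears bounded by $1/\alpha$. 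Finally I take $N$ so large that $\epsilon_n+\tilde{\epsilon}_n\le\delta'^2$ for $n\ge N$, and define $I_n$ on $B_{k_n}(p_n,R)$ by sending each point to a $Z$-nearby point of $\Omega$ (within $\delta'^2$), with $I_n(p_n):=p$.

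For the forward inequality I take a $k_n$-geodesic $\gamma_n$ from $x_n$ to $y_n$, split it by Lemma~\ref{key lemma of quasihyperbolization 1} into subcurves $\gamma_{n,i}$ with $\delta'/2\le l_{d_n}(\gamma_{n,i})<\delta'$ and $N\le 2BD/\delta'$, and apply Lemma~\ref{key lemma of quasihyperbolization 2} to write $k_n(x_n,y_n)\ge\sum_i \tfrac{1}{d_n(z_i)}l_{d_n}(\gamma_{n,i})-BD\epsilon$, where the $z_i$ are the subdivision points. I push the $z_i$ to $Z$-nearby points $w_i\in\Omega$ and connect consecutive $w_i$ by $\overline{\Omega}$-geodesics $\tilde{\gamma}_i$; here the length-space hypothesis is essential, since it makes $\overline{\Omega}$ geodesic, so $l_d(\tilde{\gamma}_i)=d(w_{i-1},w_i)\le l_{d_n}(\gamma_{n,i})+2\delta'^2$, which avoids the otherwise lossy factor $A$ that a general $A$-uniform connector would introduce. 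The boundary control $|d_n(z_i)-d(w_i)|\le\delta'^2+\tilde{\epsilon}_n\le 4\delta'$ together with the uniform continuity of $1/t$ replaces each weight $1/d(\tilde{\gamma}_i(t))$ by $1/d_n(z_i)$ up to $O(\epsilon)$; concatenating the $\tilde{\gamma}_i$ and summing (the errors being absorbed because $N\le 2BD/\delta'$ and $\sum_i l_{d_n}(\gamma_{n,i})\le BD$) gives $k(I_n x_n,I_n y_n)\le k_n(x_n,y_n)+C\epsilon$. The reverse inequality is symmetric: the forward bound shows $I_n(x_n),I_n(y_n)\in B_k(p,2R)$, so I take a $k$-geodesic between them (confined to $\{d\ge c\}$ by the choice of radius $6R$), split it, push its subdivision points back to $Z$-nearby points of $\Omega_n$, connect them by $\overline{\Omega}_n$-geodesics (again using that $\overline{\Omega}_n$ is geodesic), and run the same estimate to get $k_n(x_n,y_n)\le k(I_n x_n,I_n y_n)+C'\epsilon$. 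Setting $S:=C\vee C'$, which depends only on $A$, $R$ and $p$ through $D$ and $\alpha$, finishes the proof.

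The step I expect to be the main obstacle is keeping every connecting curve, hence every weight $1/d(\cdot)$, uniformly bounded away from the boundary \emph{while} matching lengths additively rather than up to the multiplicative constant $A$. This is exactly where the hypothesis that the $\Omega_n$ are length spaces enters: it forces the completions $\overline{\Omega}_n$ and the limit $\overline{\Omega}$ to be geodesic, so short geodesic connectors both match lengths to first order and, being short and anchored in the region $\{d\ge c\}$ supplied by Proposition~\ref{key lemma of convergence}, never approach $\partial\Omega$ (respectively $\partial\Omega_n$). Without the length-space assumption one is forced to use $A$-uniform connectors whose lengths are controlled only by $A\,d(w_{i-1},w_i)$, and the Riemann-sum comparison then degrades by a factor of $A$ that cannot be absorbed, consistent with the remark in the introduction that the conclusion is false in general.
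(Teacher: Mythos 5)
Your proposal is correct, and its core is exactly the paper's argument: the ambient space $Z$ from Proposition~\ref{conv-dist-equivalent}, the nearest-point definition of $I_n$, the splitting of quasihyperbolic geodesics via Lemma~\ref{key lemma of quasihyperbolization 1}, the Riemann-sum bound of Lemma~\ref{key lemma of quasihyperbolization 2}, uniform continuity of $1/t$ on $[\alpha,\infty)$, and the length-space hypothesis used precisely to match connector lengths additively rather than up to the factor $A$. Where you genuinely deviate is the reverse inequality, in how $I_n(x_n)$, $I_n(y_n)$ are localized so that $k$-geodesics between them stay uniformly away from $\partial \Omega$. The paper does not bootstrap from the forward bound: it invokes \cite[Lemma 2.13]{BHK} (legitimate because $\Omega$ is $A$-uniform by Lemma~\ref{uniform space stable}) to get $k(p, I_n(x_n)) \leq 4A^2\log\bigl(1+\tfrac{20D}{17c}\bigr)=:M$, then runs Proposition~\ref{key lemma of convergence} a second time at radius $3M$ to produce a second clearance constant $\tilde{c}$; this is why the paper's $\alpha$ is $\tfrac{c}{2}\wedge\tfrac{\tilde{c}}{2}$ and its final $S$ involves both $c$ and $\tilde{c}$. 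Your alternative — apply the forward estimate with $y_n=p_n$ and $I_n(p_n)=p$ to get $k(p, I_n(x_n))< R+C\epsilon$, hence images in $B_k(p,2R)$ and their $k$-geodesics in $B_k(p,6R)$ — is simpler and needs only the single constant $c$ at radius $6R$; what it costs is a quantifier check that the paper's route avoids: the forward constant $C$ must be (and is, as in the paper's $C(B,D,c)$) independent of $\epsilon$ and $\delta'$, and one must reduce without loss of generality to $\epsilon\leq R/C$ so that $B_k(p,R+C\epsilon)\subseteq B_k(p,2R)$, which is harmless since the estimate for small $\epsilon$ implies it for large $\epsilon$. Two cosmetic differences: the paper connects the pushed subdivision points by near-geodesics supplied by the length-space property of $\Omega$ itself, rather than by Hopf--Rinow geodesics in $\overline{\Omega}$ (both kinds of connectors stay inside $\Omega$ by the same 1-Lipschitz distance-to-boundary estimate you give); and like the paper (``we may assume $d(x_n,y_n)\geq \delta'/2$''), you leave the degenerate case $d_n(x_n,y_n)<\delta'/2$ implicit, where one simply notes that both $k_n(x_n,y_n)$ and $k(I_n(x_n),I_n(y_n))$ are then $O(\epsilon/\alpha)$ because the points lie in the region $\{d\geq \alpha\}$ and can be joined by curves of length at most $2\delta'$.
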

	\begin{proof}
	 By Proposition~\ref{conv-dist-equivalent}, we may assume that $(\Omega_n)_n$ and $\Omega$ are all in a same metric space $(Z, d_{Z})$ and $d^{Z}_{H}(\Omega_n, \Omega)+d^{Z}_{H}(\partial \Omega_n, \partial \Omega) \to 0 $ as $n \to \infty$.  Pick any $p \in \Omega$ and take $p_n \in \Omega_n$ such that $d_{Z}(p_n, p) \to 0$ as $n \to \infty$. Let $R>0$ and $\epsilon>0$ be fixed. Then by Lemma~\ref{key lemma of convergence}, we know that 
		 \begin{equation}
		 	B_{k}(p, 3R) \subseteq \Omega\setminus(\partial \Omega)_{c}\notag 
		 \end{equation}	 
		 and 
		 \begin{equation}
		 	B_{k_n}(p_n, 3R) \subseteq \Omega_n \setminus(\partial \Omega_n)_{c_n} \notag 
		 \end{equation}
		 where $c=\frac{d(p)/2}{e^{3R}-1}\wedge d(p)/2$ and $c_n= \frac{d_n(p_n )/2}{e^{3R}-1}\wedge d_n(p_n)/2$.
		 Note that $c_n \to c$ as $n \to \infty$. Set $\alpha := \frac{c}{2} \wedge \frac{\tilde{c}}{2}>0$ where $\tilde{c}>0$
		 is a constant given later. For this $\alpha>0$, let us fix the constant $\delta'>0$ in Remark~\ref{choice of delta' 2}.	 We may assume that $\delta' \leq \frac{c}{20} \wedge \frac{\tilde{c}}{20}$ for the later estimates. Then, there exists $N \in \mathbb{N}$ such that for each $n \geq N$, we have
		 \begin{equation}\label{nice choice1}
		d_{Z}(p_n, p) \leq \delta'^2, \ \ \frac{19c}{20}\leq c_n \leq \frac{21c}{20},	\ \ \text{and} \ \ d^{Z}_{H}(\Omega_n, \Omega)+d^{Z}_{H}(\partial \Omega_n, \partial \Omega)\leq \delta'^2=: \tilde{\epsilon}.
		\end{equation}
	Let $n \geq N$ be fixed. Since for each $x_n \in \Omega_n$ we can find $x \in \Omega$ such that $d_{Z}(x_n, x)\leq \tilde{\epsilon}$,  define a map $I_n : B_{k_n}(p_n, R) \to \Omega$ by $B_{k_n}(p_n, R) \ni x_n  \mapsto x \in \Omega$ with $I_n(p_n):=p$. 
	Let $x_n, y_n \in B_{k_n}(p_n, R)$ be given. We may assume that $d(x_n, y_n) \geq \delta'/2$. Take a geodesic curve $\gamma_n$ from $x_n$ to $y_n$ with respect to $k_n$. By Lemma~\ref{key lemma of quasihyperbolization 1}, we have subcurves $(\gamma_{n, i})_{i=1}^{N}$ of $\gamma_n$ such that 
	\begin{equation}\label{bound on length}
		\gamma_n=\sum_{i=1}^{N}\gamma_{n, i}, \ \ \ \delta'/2 \leq l_{d_n}(\gamma_{n, i}) < \delta' \ \ \  \text{and} \ \ \ N \leq \frac{2BD}{\delta'}.
	\end{equation}
 where $D:= \sup_n \diam(\Omega_n)$. Also note that $d_n(\gamma_{n, i}(t)) \geq c_n \geq \frac{c}{2}$ for each $t \in [0, l_{d_n}(\gamma_{n, i})]$ by the choice of $c_n$ in \eqref{nice choice1} and  $\gamma_n \subseteq B_{k_n}(p_n, 3R)$. Then, by Lemma \ref{key lemma of quasihyperbolization 2},  we have
	\begin{align}\label{abc}
 		\int_{0}^{l_{d_n}(\gamma_n)}\frac{1}{d_n(\gamma_n(t))} \, dt \geq \sum\limits_{i=1}^{N}\frac{1}{d_n(\gamma_{n, i}(t_i))} l_{d_n}(\gamma_{n, i})-BD\epsilon
 	\end{align}
 	where $t_i:=l_{d_n}(\gamma_{n, i})$. By the definition of the map $I_n : B_{k_n}(p_n, R) \to \Omega$, for $(I_n(\gamma_n(t_i)))_{i=1}^{N}$ we have
 	\begin{equation}\label{I_n estimate}
 		|d_n(\gamma_{n, i}(t_i), \gamma_{n, i+1}(t_{i+1}))- d(I_n(\gamma_{n, i}(t_i)), I_{n}(\gamma_n(t_{i+1})))|< 2\tilde{\epsilon}.
 	\end{equation}
 	Since $(\Omega, d)$ is a length space, we can pick a curve $\tilde{\gamma}_{1}$ from $I_n(x_n)$ to $I_n(\gamma_{n, 1}(t_1))$ whose length is less than $d(I_n(x_n), I_n(\gamma_n(t_1)))+\tilde{\epsilon}$ and $\tilde{\gamma}_{i}$  from $I_n(\gamma_{n, i-1}(t_{i-1}))$ to $I_n(\gamma_{n, i}(t_{i}))$ whose length is less than $d(I_n(\gamma_n(t_{i-1})), I_n(\gamma_n(t_{i})))+\tilde{\epsilon}$  for $2 \leq i \leq N$. Let $t_i':= l_{d}(\tilde{\gamma}_{i})$. Then from \eqref{I_n estimate} we get
 	\begin{equation}\label{new curve estimate}
 		t_i':= l_{d}(\tilde{\gamma}_{i})  \leq d_n(\gamma_{n, i-1}(t_{i-1}), \gamma_{n, i}(t_{i}))+3\tilde{\epsilon} \leq l_{d_n}(\gamma_{n,i})+3\tilde{\epsilon} \leq 4\delta' \notag
 		\end{equation}
 		for $1\leq i \leq N$, $t_1 \leq 4\delta'$, and 
 	\begin{equation}
 		\begin{split}
 			|d(\tilde{\gamma}_i(t_i'))-d_n(\gamma_{n, i}(t_i))| &\leq |d(I_n(\gamma_{n, i}(t_i)))-d(\gamma_{n, i}(t_i))| \notag  \\
 			&\ \ \ +|d(\gamma_{n, i}(t_i))-d_n(\gamma_{n, i}(t_i))|\notag  \\
 			&\leq 2\tilde{\epsilon} \leq 2\delta'.\notag 
 		\end{split}
 	\end{equation}
 	for $1\leq i \leq N$. Also, in order to employ \eqref{uniform continuity of 1/t}, we also note that
 	\begin{equation}\label{boundary estimate}
 		d(\tilde{\gamma}_i(t_i')) \geq d(\gamma_{n, i}(t_i))-\tilde{\epsilon} \geq d_n(\gamma_{n, i}(t_i))-2\tilde{\epsilon} \geq \frac{19c}{20}-\frac{2c}{20}=\frac{17c}{20}.
 	\end{equation}
 	Hence, from \eqref{length bound quasihyperbolization}, \eqref{uniform continuity of 1/t}, \eqref{abc}, \eqref{new curve estimate}, and the estimate
 	\[
 	\sum\limits_{i=1}^{N} \frac{1}{d_n(\gamma_{n, i}(t_i))}\tilde{\epsilon} \leq \frac{6N \tilde{\epsilon}}{c} \leq \frac{12BD}{c\delta'}\tilde{\epsilon} \leq \frac{12BD}{c}\epsilon,
 	\]
 	 we have
 		\begin{align}\label{quasi conv 1}
 		\sum\limits_{i=1}^{N}\frac{1}{d(\tilde{\gamma}_i(t_i'))}l_{d}(\tilde{\gamma}_{i}) 
 		&\leq \sum\limits_{i=1}^{N}(\frac{1}{d_n(\gamma_{n, i}(t_i))}+\epsilon)(l_{d_n}(\gamma_{n, i})+3 \tilde{\epsilon})\notag \\
 		&\leq \sum\limits_{i=1}^{N} \frac{1}{d_n(\gamma_{n, i}(t_i))} l_{d_n}(\gamma_{n, i})+3\sum\limits_{i=1}^{N} \frac{1}{d_n(\gamma_{n, i}(t_i))}\tilde{\epsilon} \notag \\
 		&\ \ +\epsilon l_{d_n}(\gamma_{n})+3\tilde{\epsilon}\epsilon \notag \\
  		&\leq \int_{0}^{l_{d_n}(\gamma_n)}\frac{1}{d_n(\gamma_n(t))} \, dt + BD\epsilon+ \frac{36BD}{c}\epsilon \notag \\
 		&\ \ + BD\epsilon+3\tilde{\epsilon}\epsilon \notag \\
 		&= \int_{0}^{l_{d_n}(\gamma_n)}\frac{1}{d_n(\gamma_n(t))}\, dt+C(B, D, c)\epsilon
 		\end{align}
 	where 
 	\[
 	C(B, D, c):= 2BD+ \frac{36BD}{c}+3.
 	\]
 	Since we have the estimate
 	\[
 	d(\tilde{\gamma}_{i}(t), \tilde{\gamma}_{i}(t_{i}')) \leq l_d(\tilde{\gamma}_{i}) \leq d_n(\gamma_{n, i}(t_i), \gamma_{n, i+1}(t_{i+1}))+3\tilde{\epsilon} \leq l_{d_n}(\gamma_{n, i+1})+3\tilde{\epsilon} \leq  4\delta, 
 	\]
 	for each $t \in [0, l_d(\tilde{\gamma}_{i})]$ ($1\leq i \leq N$), one can see, together with \eqref{boundary estimate}, that
 	\[
 		d(\tilde{\gamma}_{i}(t)) \geq  d(\tilde{\gamma}_{i}(t_{i}'))-4\delta \geq \frac{17c}{20}-\frac{4c}{100}\geq \frac{13c}{20}\geq \frac{c}{2}.
 	\]
 	Hence by \eqref{uniform continuity of 1/t} and \eqref{new curve estimate}, we have 
 	\begin{align}\label{quasi conv 2}
 		\sum\limits_{i=1}^{N}\frac{1}{d(\tilde{\gamma}_i(t_i'))}l_{d}(\tilde{\gamma}_{i}) &\geq \sum\limits_{i=1}^{N}\int_{0}^{l_d(\tilde{\gamma}_{i})}\Big(\frac{1}{d(\tilde{\gamma}_i(t))}-\epsilon \Big) \, dt \notag \\
 		&\geq \sum\limits_{i=1}^{N}\int_{0}^{l_d(\tilde{\gamma}_{i})}\frac{1}{d(\tilde{\gamma}_i(t))}\, dt -(N\epsilon)(4\delta) \notag \\
 		&\geq k(I_n(x_n), I_n(y_n))-8BD\epsilon \notag \\
 		&\geq k(I_n(x_n), I_n(y_n))-K(B, D)\epsilon
 	\end{align}
  	where $K(B, D) := 8BD$. Therefore, combining the inequalities \eqref{quasi conv 1} and \eqref{quasi conv 2}, we conclude that 
  	\[
  	k(I_n(x_n), I_n(y_n)) \leq k_n(x_n, y_n)+ K(B, D)\epsilon + C(B, D,  c)\epsilon
  	\]
  	Hence 
  	\[
  	k(I_n(x_n), I_n(y_n))- k_n(x_n, y_n) \leq K(B, D)\epsilon + C(B, D, c)\epsilon.
  	\]

  	Next, by following almost the same proof as above, we will prove that for every pair points $x_n, y_n \in B_{k_n}(p_n, R)$
  	\[
  	k_n(x_n, y_n) - k(I_n(x_n), I_n(y_n))\leq \tilde{K}(B, D)\epsilon + \tilde{C}(\tilde{c}, B, D)\epsilon
  	\]
  	 for some $\tilde{K}(B, D)$ and $\tilde{C}(\tilde{c}, B, D)$. Again pick $x_{n}$ and $y_n$ from $B_{k_n}(p_n, R)$ and a geodesic curve $\gamma$ between $I_n(x_n)$ and $I_n(y_n)$ with respect to the quasihyerbolic metric $k$. We first claim that $I_n(x_n)$ and $I_n(y_n)$ are in the large enough ball $B_k(p, M)$ for some $M>0$. Since we have the estimate
  	 \begin{equation}
  	 	d(I_n(x_n)) \geq d(x_n)-\tilde{\epsilon} \geq d_n(x_n)-2\tilde{\epsilon}\notag \geq c_n-2\tilde{\epsilon} \geq \frac{19c}{20}-\frac{2c}{20}\geq \frac{17c}{20} \notag
  	 \end{equation} 
  	  for all  $n \geq N$, by \cite[Lemma~2.13]{BHK}, we have
  	 \begin{equation}
  	 	\begin{split}
  	 		k(p, I_n(x_n)) &\leq 4A^2\text{log}(1+\frac{d(p, I_n(x_n))}{d(p) \wedge d(I_n(x_n))}) \notag \\
  	 		&\leq 4A^2\text{log}(1+\frac{20D}{17c})=:M(A, c, D)=:M.\notag 
  	 	\end{split}
  	 \end{equation}
  	 Hence a geodesic curve $\gamma$ connecting $I_n(x_n)$ and $I_n(y_n)$ stays in $B_k(p, 3M)$. By Lemma~\ref{key lemma of convergence}, we find $\tilde{c}$ such that 
  	 \[
  	 B_{k}(p, 3M) \subseteq \Omega\setminus(\partial \Omega)_{\tilde{c}}
  	 \]
  	 where $\tilde{c}=\frac{d(p)/2}{e^{3M}-1}\wedge d(p)/2$. From here by following the same argument we did above we have 
  	 \[
   k_n(x_n, y_n)-k(I_n(x_n), I_n(y_n)) \leq \tilde{K}(B, D)\epsilon + \tilde{C}(\tilde{c}, B, D)\epsilon
  	\]
where
\[
\tilde{C}(\tilde{c}, B, D):=2BD + \frac{36BD}{\tilde{c}} +3 \ \ \ \text{and} \ \ \ \tilde{K}(B, D) :=8BD.
\]
 Therefore, we conclude that
  \[
  |k_n(x_n, y_n)-k(I_n(x_n), I_n(y_n))| \leq S\epsilon
  \]
where 
\[
S := S(B, D, \tilde{c}, c):=2(\tilde{K}(B, D) \vee \tilde{C}(\tilde{c}, B, D) \vee K(B, D)\vee C(B , D, c)).
\]
\end{proof}

\begin{thm}[Theorem~\ref{fouth theorem}]\label{p-GH of uniform spaces}
	Let $(\Omega_n, d_n)_n \subseteq \mathcal{U}(A, R)$ and $(\Omega, d) \in \mathcal{M}$. Suppose $(\Omega_n, d_n)_n$ are all length spaces and $d_{GHB}(\Omega_n, \Omega) \to 0$ as $n \to \infty$. Then $(\Omega_n, p_n, k_n)_n$ is pointed Gromov-Hausdorff convergent to $(\Omega, p, k)$ for some $p_n \in \Omega_n$ and $p \in \Omega$ where $k_n$ and $k$ are quasihyperbolic metrics defined on $\Omega_n$ and $\Omega$, respectively.
	\end{thm}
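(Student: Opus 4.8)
The plan is to feed Proposition~\ref{key lemma of quasihyperbolization 3} into the definition of pointed Gromov-Hausdorff convergence, now with the quasihyperbolic metrics $k_n,k$ in place of the original ones. First I would fix $r>0$ and $0<\epsilon<r$ as in Definition~\ref{p-GH}. Applying Proposition~\ref{key lemma of quasihyperbolization 3} with radius $r$ and with its free parameter chosen to be $\eta:=\epsilon/(2S)$, where $S=S(A,r,p)$ is the constant produced there, I obtain base points $p_n\in\Omega_n$, $p\in\Omega$ and, for all large $n$, maps $I_n:B_{k_n}(p_n,r)\to\Omega$ with $I_n(p_n)=p$ and
\[
|k_n(x_n,y_n)-k(I_n(x_n),I_n(y_n))|\leq S\eta=\tfrac{\epsilon}{2}<\epsilon \qquad\text{for all } x_n,y_n\in B_{k_n}(p_n,r).
\]
Setting $f^{\epsilon}_n:=I_n$, conditions (1) and (2) of Definition~\ref{p-GH} are immediate. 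It remains to verify the covering condition (3), namely $B_k(p,r-\epsilon)\subseteq \big(I_n(B_{k_n}(p_n,r))\big)_{\epsilon}$ measured in $k$.

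For (3) I would first confine the relevant balls to the interior. By Proposition~\ref{key lemma of convergence} we have $B_k(p,r)\subseteq\Omega\setminus(\partial\Omega)_c$ and $B_{k_n}(p_n,r)\subseteq\Omega_n\setminus(\partial\Omega_n)_{c_n}$, with explicit constants $c,c_n>0$ satisfying $c_n\to c$; so along these balls the distance to the respective boundary is bounded below by approximately $c$. Moreover $(\Omega,d)$ is a bounded $A$-uniform space: it is $A$-uniform by Lemma~\ref{uniform space stable} and precompact as an element of $\mathcal{M}$, hence it lies in $\mathcal{U}(A,R)$ and $(\Omega,k)$ is proper geodesic by \cite[Theorem~3.6]{BHK}, which in particular guarantees the existence of $k$-geodesics in $\Omega$. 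Given $z\in B_k(p,r-\epsilon)$, I would then use the original-metric Hausdorff convergence furnished by Proposition~\ref{conv-dist-equivalent} to select $x_n\in\Omega_n$ with $d_Z(x_n,z)\leq\tilde\epsilon$ for $\tilde\epsilon$ as small as desired; by the very construction of $I_n$ (a nearest-point map in the ambient space $Z$) one also gets $d(I_n(x_n),z)\leq 2\tilde\epsilon$.

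The heart of the matter is converting this $d$-proximity into $k$-proximity, which is exactly where the uniform distance-to-boundary bounds pay off. Since $z$ and $I_n(x_n)$ lie at $d$-distance $\gtrsim c$ from $\partial\Omega$, the comparison estimate \cite[Lemma~2.13]{BHK},
\[
k(a,b)\leq 4A^2\log\Big(1+\frac{d(a,b)}{d(a)\wedge d(b)}\Big),
\]
forces $k(I_n(x_n),z)\to 0$ as $\tilde\epsilon\to 0$, hence $\leq\epsilon$ once $\tilde\epsilon$ is small. To confirm the admissibility $x_n\in B_{k_n}(p_n,r)$ I would estimate $k_n(p_n,x_n)$ intrinsically, thereby avoiding the apparent circularity with the distortion bound: lifting a $k$-geodesic $\sigma$ from $p$ to $z$ (which stays in $B_k(p,r-\epsilon)\subseteq\Omega\setminus(\partial\Omega)_c$) to a nearby path in $\Omega_n$ and using that on the boundary-avoiding region both $k$ and $k_n$ are comparable to the corresponding original metrics (again via \cite[Lemma~2.13]{BHK}, Proposition~\ref{key lemma of convergence}, and $c_n\to c$), one obtains $k_n(p_n,x_n)\leq k(p,z)+o(1)<r$ for $n$ large. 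This yields (3) and completes the verification of Definition~\ref{p-GH}.

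I expect this last membership check $x_n\in B_{k_n}(p_n,r)$ to be the main obstacle, since it is the single place where one must break the circularity between ``$x_n$ lies in the ball'' and ``the distortion bound applies to $x_n$''. The resolution is to run the interior comparison estimate directly inside $\Omega_n$, where no reference to $I_n$ is required, and to invoke the distortion bound of Proposition~\ref{key lemma of quasihyperbolization 3} only afterwards; the uniform control of the distance to the boundary along $B_k(p,r)$ (Proposition~\ref{key lemma of convergence}) is precisely what makes both the $k$-closeness of $I_n(x_n)$ to $z$ and this intrinsic length estimate go through with errors tending to $0$.
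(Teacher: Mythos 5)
Your proposal is correct and follows essentially the same route as the paper: the paper likewise feeds Proposition~\ref{key lemma of quasihyperbolization 3} into Definition~\ref{p-GH}, picks for each point of the limit ball an ambient-nearest point in $\Omega_n$ (its map $J:B_k(p,R)\to\Omega_n$ with $d_Z(J(x),x)\leq\tilde\epsilon$, $J(p)=p_n$), and settles the membership $J(x)\in B_{k_n}(p_n,R)$ exactly as you do, by rerunning the interior comparison of Proposition~\ref{key lemma of quasihyperbolization 3} with the roles of $\Omega$ and $\Omega_n$ exchanged (lifting $k$-geodesics of $\Omega$ into the length spaces $\Omega_n$), the only cosmetic difference being the final conversion of $d$-proximity to $k$-proximity, which the paper gets by integrating $1/d(\gamma(t))$ along a short connecting curve to obtain $k(I_n(J(x)),x)\leq 3(\tfrac{1}{c}+\epsilon)\tilde\epsilon$ while you invoke \cite[Lemma~2.13]{BHK}. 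One small caution: within the lifting step itself the per-segment bound must come from the length-space short-curve estimate $k_n\text{-length}\leq l_{d_n}/\mathrm{dist}(\cdot,\partial\Omega_n)$ as in the proof of Proposition~\ref{key lemma of quasihyperbolization 3}, not from \cite[Lemma~2.13]{BHK}, whose logarithmic bound is lossy by a factor of order $4A^2$ and would only yield $k_n(p_n,x_n)\lesssim 4A^2\,k(p,z)$ rather than $k(p,z)+o(1)$.
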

	\begin{proof}
 By Proposition \ref{key lemma of quasihyperbolization 3}, for each $R>0$ and $\epsilon>0$, there exists $N \in \mathbb{N}$ such that for each $n \geq N$, we have a map $I_n : B_{k_n}(p_n, R) \to \Omega$ with $I(p_n)=p$ such that
	\[
  |k_n(x_n, y_n)-k(I_n(x_n), I_n(y_n))| \leq S\epsilon
  \]
for some $S$ which only depends on the constants $A$ and $R>0$ and a point $p \in \Omega$. It suffices to prove that $I_n$ satisfies Property ($3$) in Definition~\ref{p-GH}. Recall that by the choice of $N \in \mathbb{N}$ in the proof of Proposition~\ref{key lemma of quasihyperbolization 3}, for every $n \geq N$, we have
		 \begin{equation}
		d_{Z}(p_n, p) \leq \delta'^2, \ \ \frac{19c}{20}\leq c_n \leq \frac{21c}{20},	\ \ \text{and} \ \ 
		d^{Z}_{H}(\Omega_n, \Omega)+d^{Z}_{H}(\partial \Omega_n, \partial \Omega)\leq \delta'^2=: \tilde{\epsilon}.\notag 
	\end{equation}
	where $0<\delta' \leq \frac{c}{20} \wedge \frac{\tilde{c}}{20}$ and $c_n$, $c$, and $\tilde{c}$ are the constants in Proposition~\ref{key lemma of quasihyperbolization 3} for $\Omega_n$ and $\Omega$, respectively.
 Define a map $J : B_{k}(p, R) \to X_n$ by $x \in B_{k}(p, R) \mapsto x_n \in X_n$ such that $J(p)=p_n$ and $d_{z}(x_n, x) \leq \tilde{\epsilon}$. Then by exactly the same argument(think of $X$ as $X_n$ and $X_n$ as $X$ in the proof of Proposition~\ref{key lemma of quasihyperbolization 3}), we have 
  \begin{equation}
  	|k(p, x)-k_n(p_n, J(x))|\leq S\epsilon,\notag
  \end{equation}
  for every $x \in B_k(p, R)$. This implies that
  \begin{equation}
  	k_n(p_n, J(x)) \leq  k(p, x)+S\epsilon < R \notag
  \end{equation}
 for every $x \in B_k(p, R-S\epsilon)$. Hence $J(x) \in B_{k_n}(p_n, R)$. Also, since
 \[
 d_n(I_n(J(x)), x) \leq d_{Z}(I_n(J(x)), J(x))+ d_{Z}(J(x), x) \leq 2\tilde{\epsilon},
 \]
  by taking a curve $\gamma$ from $I_n(J(x))$ to $x$ with the length $d(I_n(J(x)), x)+ \tilde{\epsilon}\leq 3\tilde{\epsilon}$, we have
 \[
 \begin{split}
 	 k(I_n(J(x)), x) &\leq \int_{0}^{l_d(\gamma)}\frac{1}{d(\gamma(t))} \, dt\\
 	 &\leq (\frac{1}{d(x)}+\epsilon)l_d(\gamma)\\
 	 &\leq (\frac{1}{c}+\epsilon)3\tilde{\epsilon}. \notag 
 \end{split} 
 \]
 This implies that
 \begin{equation}
 	B_k(p, R-S\epsilon)\subseteq (I_n(B_{k_n}(p_n, R)))_{3(\frac{1}{c}+\epsilon)\tilde{\epsilon}}. \notag 
 \end{equation}
 Taking the maximum $S\epsilon \vee 3(\frac{1}{c}+\epsilon)\tilde{\epsilon}$, we complete the proof.
 \end{proof}
We close this paper by giving the following two corollaries. We would like to point out that by \cite[Proposition 4.37]{BHK}, it is known that a proper geodesic roughly starlike Gromov hyperbolic space $(X, d)$ is bilipschitz equivalent to the space $(X^{\epsilon}, k)$ where $X^{\epsilon}$ is the uniformized space and $k$ is the quasihyperbolic metric with respect to $d_{\epsilon}$. It is not straightforward, however, that the pointed Gromov-Hausdorff convergence $(X_n, d_n, p_n)\to (X, d, p)$ implies the pointed Gromov-Hausdorff convergence $(X_n^{\epsilon}, k_n, p_n \to (X^{\epsilon}, k, p)$.
\begin{cor}
	Let $(X_n, d_n, p_n)_n$ be a sequence of pointed proper geodesic $M$-roughly starlike $\delta$-Gromov hyperbolic spaces convergent to a pointed metric space $(X, d, p)$. Then, for a fixed $0< \epsilon \leq \epsilon_0(\delta)$, the sequence of uniformized spaces with quasihyperbolic metrics $(X_n^{\epsilon}, k_n, p_n)_n$ pointed Gromov-Hausdorff converges to $(X^{\epsilon}, k, p)$.
\end{cor}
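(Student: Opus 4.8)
The plan is to obtain this corollary by chaining the two main convergence results already proved: Theorem~\ref{boundary conv} (=Theorem~\ref{second theorem}), which controls the uniformized metrics $d_{n,\epsilon}$, and Theorem~\ref{p-GH of uniform spaces} (=Theorem~\ref{fouth theorem}), which passes from length uniform spaces to their quasihyperbolizations. First I would record the relevant structure. Since each $X_n$ is an unbounded proper geodesic $M$-roughly starlike $\delta$-Gromov hyperbolic space and $0<\epsilon\le\epsilon_0(\delta)$, the uniformized space $(X_n^{\epsilon},d_{n,\epsilon})$ is a bounded uniform space by Remark~\ref{epsilon constraint} and the uniformization results of \cite{BHK}; crucially, the uniformity constant $A'=A'(\delta,M,\epsilon)$ furnished there depends only on $\delta$, $M$, and $\epsilon$, so it is common to every $n$ because the whole sequence shares $\delta$ and $M$. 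By \cite[Proposition 2.20]{BHK} these spaces are precompact, whence $(X_n^{\epsilon},d_{n,\epsilon})_n\subseteq\mathcal M$; likewise $(X^{\epsilon},d_{\epsilon})\in\mathcal M$, the limit $X$ being proper geodesic roughly starlike $\delta$-Gromov hyperbolic by Theorem~\ref{first theorem}.

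Next I would verify the remaining hypotheses of Theorem~\ref{p-GH of uniform spaces} for the sequence $(X_n^{\epsilon},d_{n,\epsilon})_n$. The uniformization metric is a path metric by construction, so each $(X_n^{\epsilon},d_{n,\epsilon})$ is a length space, as noted in the remark following Lemma~\ref{bhk}. For a uniform lower bound on the diameter I would argue by continuity: Theorem~\ref{boundary conv} together with the elementary inequality \eqref{converse not} gives $d_{GH}(\overline{X_n^{\epsilon}},\overline{X^{\epsilon}})\to0$, and since the diameter is $2$-Lipschitz with respect to $d_{GH}$ and is unchanged by completion, $\diam(X_n^{\epsilon})\to\diam(X^{\epsilon})$. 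As $X$ is unbounded, $X^{\epsilon}$ has positive diameter, so there is $R'>0$ with $\diam(X_n^{\epsilon})\ge R'$ for all large $n$; discarding finitely many initial terms (harmless, since pointed Gromov–Hausdorff convergence is a tail property) yields $(X_n^{\epsilon},d_{n,\epsilon})_n\subseteq\mathcal U(A',R')$. Finally, Theorem~\ref{boundary conv} asserts that $(X_n^{\epsilon},d_{n,\epsilon})_n$ Gromov–Hausdorff converges to $(X^{\epsilon},d_{\epsilon})$ with boundary, which by Proposition~\ref{conv-dist-equivalent} is precisely the statement $d_{GHB}(X_n^{\epsilon},X^{\epsilon})\to0$.

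With all hypotheses in place I would apply Theorem~\ref{p-GH of uniform spaces} to the length uniform spaces $(X_n^{\epsilon},d_{n,\epsilon})_n$ and their limit $(X^{\epsilon},d_{\epsilon})$, concluding that $(X_n^{\epsilon},k_n,p_n)_n$ is pointed Gromov–Hausdorff convergent to $(X^{\epsilon},k,p)$, where $k_n$ and $k$ are the quasihyperbolic metrics of $X_n^{\epsilon}$ and $X^{\epsilon}$. Here I would be careful with the base-point bookkeeping: the boundary convergence of Theorem~\ref{boundary conv} is realized through the maps $\Phi_n$ with $\Phi_n(p_n)=p$, so in the common space built in Proposition~\ref{conv-dist-equivalent} the distinguished points satisfy $p_n\to p$. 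This is exactly the freedom exploited inside the proof of Theorem~\ref{p-GH of uniform spaces}, where one chooses any $p\in\Omega$ and then $p_n\in\Omega_n$ with $d_Z(p_n,p)\to0$; hence the original base points $p_n$ and $p$ may legitimately be used.

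The step I expect to be the main obstacle is confirming that the entire sequence lands in a single class $\mathcal U(A',R')$, i.e. that both the uniformity constant $A'$ and the diameter lower bound $R'$ can be taken independently of $n$, and that the base points are consistent. The uniformity of $A'$ rests on all $X_n$ sharing the same $\delta$ and $M$, so that the \cite{BHK} uniformization constants are common to the sequence; the diameter bound rests on the continuity argument above combined with the positivity of $\diam(X^{\epsilon})$; and the base-point matching rests on the fact that Theorem~\ref{boundary conv} preserves the distinguished point. Once these uniformities are secured, the corollary follows at once from the two cited theorems.
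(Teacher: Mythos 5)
Your proposal is correct and follows exactly the route the paper intends: the corollary is stated there without proof, as an immediate consequence of chaining Theorem~\ref{second theorem} (Gromov--Hausdorff convergence with boundary of the uniformized spaces) with Theorem~\ref{fouth theorem} (stability of quasihyperbolization for length uniform spaces). Your writeup moreover supplies the bookkeeping the paper leaves implicit --- the $n$-independent uniformity constant, the uniform lower diameter bound, the length-space property of $d_{n,\epsilon}$, and the base-point matching via $\Phi_n(p_n)=p$ --- all of which are handled correctly.
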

\begin{cor}
	Let $(\Omega_n, d_n)_n$ be a sequence of bounded $A$-uniform length metric spaces convergent to a metric space $(\Omega, d)$ with respect to $d_{GHB}$. Then there exist $p_n \in \Omega_n$ and $p \in \Omega$ such that the sequence $(\Omega_n, k_{n, \epsilon})$ converges to $(\Omega, k_{\epsilon})$ with respect to $d_{GHB}$ for a fixed $0< \epsilon \leq \epsilon_0(\delta)$ where $(\Omega_n, k_{n, \epsilon})$ and $(\Omega, k_{\epsilon})$ are the uniformized spaces of $(\Omega_n, p_n, k_n)$ and $(\Omega, p, k)$, respectively.
\end{cor}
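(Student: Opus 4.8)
The plan is to recognize this corollary as a direct composition of the preceding Theorem~\ref{fouth theorem} with Theorem~\ref{second theorem}, once the relevant spaces are correctly identified as quasihyperbolizations of the uniform spaces. First I would invoke Theorem~\ref{fouth theorem}: its hypotheses are precisely that $(\Omega_n, d_n)_n \subseteq \mathcal{U}(A, R)$ are length spaces converging to $(\Omega, d)$ with respect to $d_{GHB}$, and its conclusion supplies base points $p_n \in \Omega_n$ and $p \in \Omega$ together with the pointed Gromov-Hausdorff convergence $(\Omega_n, k_n, p_n)_n \to (\Omega, k, p)$, where $k_n$ and $k$ are the respective quasihyperbolic metrics. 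These are exactly the base points named in the statement, so no new choice is needed.

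Next I would verify that this convergent sequence of pointed spaces meets the hypotheses of Theorem~\ref{second theorem}. By \cite[Theorem 3.6]{BHK}, each bounded $A$-uniform space $(\Omega_n, d_n)$ gives rise to a proper geodesic $\delta$-Gromov hyperbolic space $(\Omega_n, k_n)$ which, being bounded, is $M$-roughly starlike, where \emph{crucially} $\delta = \delta(A)$ and $M = M(A)$ depend only on $A$. Since $A$ is fixed along the sequence, the constants $\delta$ and $M$ are uniform in $n$. For the limit, Lemma~\ref{uniform space stable} shows that $(\Omega, d)$ is again an $A$-uniform space, and it is bounded because $d_{GHB}$-convergence forces $d_{GH}(\overline{\Omega}_n, \overline{\Omega}) \to 0$ by \eqref{converse not} and hence convergence of diameters; consequently $(\Omega, k)$ is likewise a proper geodesic $M$-roughly starlike $\delta$-Gromov hyperbolic space with the \emph{same} constants, and being proper geodesic it is complete, as required for the limit in Theorem~\ref{second theorem}.

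With these identifications in hand, I would apply Theorem~\ref{second theorem} (proved as Theorem~\ref{boundary conv}) to the sequence $(\Omega_n, k_n, p_n)_n$ playing the role of $(X_n, d_n, p_n)_n$. For the fixed $0 < \epsilon \leq \epsilon_0(\delta)$ with $\delta = \delta(A)$, the uniformization of $(\Omega_n, k_n)$ produces the uniformized spaces $(\Omega_n^{\epsilon}, k_{n, \epsilon})$ and $(\Omega^{\epsilon}, k_{\epsilon})$, which in the paper's convention are the same underlying sets $\Omega_n$ and $\Omega$ equipped with the uniformization metrics $k_{n, \epsilon}$ and $k_{\epsilon}$; this matches the notation of the corollary exactly. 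Theorem~\ref{second theorem} then asserts that $(\Omega_n, k_{n, \epsilon})_n$ Gromov-Hausdorff converges to $(\Omega, k_{\epsilon})$ \emph{with boundary}. Finally, by Proposition~\ref{conv-dist-equivalent}, Gromov-Hausdorff convergence with boundary is equivalent to convergence in $d_{GHB}$, which yields the claimed conclusion that $(\Omega_n, k_{n, \epsilon})$ converges to $(\Omega, k_{\epsilon})$ with respect to $d_{GHB}$.

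The argument is thus a matter of correctly matching hypotheses rather than producing new estimates, and the main point requiring care is the \textbf{uniformity of the constants}: Theorem~\ref{second theorem} demands a single $\delta$ and a single $M$ shared by every member of the sequence and by the limit, whereas a priori each quasihyperbolization could have its own hyperbolicity and rough-starlikeness constants. This uniformity is exactly what membership in $\mathcal{U}(A, R)$ with a common $A$ guarantees, via $\delta = \delta(A)$ and $M = M(A)$ in \cite[Theorem 3.6]{BHK}. I would therefore make this dependence on $A$ explicit when citing that result, and confirm that the threshold $\epsilon_0(\delta)$ appearing in the corollary is the very same constant $\epsilon_0(\delta)$ fixed in Remark~\ref{epsilon constraint} and used in Theorem~\ref{second theorem}, so that the admissible range of $\epsilon$ is consistent throughout.
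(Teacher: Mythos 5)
Your proposal is correct and follows exactly the route the paper intends: the corollary is stated there without proof precisely as the composition of Theorem~\ref{fouth theorem} (the quasihyperbolizations $(\Omega_n,k_n,p_n)$ converge to $(\Omega,k,p)$ in the pointed Gromov--Hausdorff sense) with Theorem~\ref{second theorem} (uniformized spaces then converge with boundary, equivalently in $d_{GHB}$ by Proposition~\ref{conv-dist-equivalent}), the key point being, as you note, that $\delta=\delta(A)$ and $M=M(A)$ from \cite{BHK} are uniform along the sequence and hold for the limit since $(\Omega,d)$ is again a bounded $A$-uniform space. The only detail worth making explicit is that the corollary's hypotheses do not literally place $(\Omega_n,d_n)_n$ in some $\mathcal{U}(A,R)$, as Theorem~\ref{fouth theorem} requires; this follows because $d_{GH}(\overline{\Omega}_n,\overline{\Omega})\to 0$ forces $\diam(\Omega_n)\to\diam(\Omega)>0$, so after discarding finitely many terms the sequence lies in $\mathcal{U}(A,R)$ with $R:=\diam(\Omega)/2$.
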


\vskip .2cm

}
\end{document}